\renewcommand{\@seccntformat}[1]{{\csname the#1\endcsname}.\hspace{.5em}}
\newtheorem{theorem}{Theorem}[section]
\newtheorem{conjecture}[theorem]{Conjecture}
\newtheorem{lemma}[theorem]{Lemma}
\renewcommand{\thefootnote}{*}
\numberwithin{equation}{section}
\begin{document}

\begin{center}
{\large\bf  $q$-Supercongruences from squares of basic hypergeometric series}
\end{center}

\vskip 2mm \centerline{Victor J. W. Guo and Long Li\footnote{Corresponding author.} }
\begin{center}
{\footnotesize School of Mathematics and Statistics, Huaiyin Normal University, Huai'an 223300, Jiangsu,\\
 People's Republic of China\\
{\tt  jwguo@hytc.edu.cn,   lli@hytc.edu.cn } }
\end{center}


\vskip 0.7cm \noindent{\bf Abstract.} We give some new $q$-supercongruences on truncated forms of squares of basic hypergeometric series.
Most of them are modulo the cube of a cyclotomic polynomial, and two of them are modulo the fourth power of a cyclotomic polynomial.
The main ingredients of our proofs are the creative microscoping method, a lemma of El Bachraoui, and the Chinese remainder theorem
for coprime polynomials. We also propose several related conjectures for further study.

\vskip 3mm \noindent {\it Keywords}: $q$-supercongruences; cyclotomic polynomial; creative microscoping; basic hypergeometric series.

\vskip 0.2cm \noindent{\it AMS Subject Classifications}: 11A07, 11B65

\renewcommand{\thefootnote}{**}

\section{Introduction}

$q$-Supercongruences have attracted a lot of interest of authors in recent years.
For example, the first author and Zudilin \cite{GuoZu} devised a new method, called `creative microscoping', to prove $q$-supercongruences
by adding an extra parameter and deliberating asymptotic behavior of $q$-series at roots of unity. A representative $q$-supercongruence established by them
is as follows:  for any positive integer
$n$ with $\gcd(n,6)=1$,
\begin{align}
\sum_{k=0}^{(n-1)/2}[8k+1]\frac{(q;q^2)_k^2
(q;q^2)_{2k}}{(q^2;q^2)_{2k}(q^6;q^6)_k^2}q^{2k^2} \equiv
q^{(1-n)/2}[n]\biggl(\frac{-3}{n}\biggr) \pmod{[n]\Phi_n(q)^2}.
\label{q4b}
\end{align}
Here and in what follows, the $\emph{q-shifted factorail}$ is defined by
$(a;q)_0=1$ and $(a;q)_n=(1-a)(1-aq)\cdots (1-aq^{n-1})$ for $n\geqslant 1$,
the $\emph{q-integer}$ is defined as $[n]=1+q+\cdots+q^{n-1}$.
For simplicity, shall also use the compact notation: $(a_1,\ldots,a_m;q)_n=(a_1;q)_n\ldots (a_m;q)_n$.
Moreover, the $n$-th $\emph{cyclotomic polynomials}$, denoted by $\Phi_n(q)$, is defined by
$$\Phi_n(q)=\prod_{\substack{1\leqslant k\leqslant n\\ \gcd(k,n)=1}}(q-\zeta^k),$$ where $\zeta$ is an $n$-th primitive root of unity,
and $(\frac{-3}{n})$ denotes the Jacobi symbol.
For more progress on $q$-supercongruences, we refer the reader to
\cite{Mohamed,Guo-2018,Guo-para,Guo-a2,Guo-div,Guo-f,Guo-mod4,Guo-c2,Guo-diff,Guo-ijnt,GS0,GS2,GS,GW,GZ,GuoZu3,Li,LW,Liu,LH,LP,NP,NP2,Straub,WY,WY2,WY3,Wei,WLW,Zudilin}.

El Bachraoui \cite{Mohamed} employed the creative microscoping method to prove a few new $q$-supercongruences, such as
\begin{equation}
\sum_{k=0}^{n-1}\sum_{j=0}^{k}c_q(j)c_q(k-j)\equiv q[n]^2 \pmod{[n]\Phi_n(q)^2},  \label{eq:el}
\end{equation}
where $c_q(k)$ stands for the $k$-th term on the left-hand side of \eqref{q4b} and $\gcd(n,6)=1$.
Note that the left-hand side of \eqref{eq:el} may be deemed a truncated form of the square of the basic hypergeometric series
$\sum_{k=0}^{\infty}c_q(k)$.
Very recently, motivated by El Bachraoui's work,
the second author \cite{Li} established three more such $q$-supercongruences, including
\begin{equation}
\sum_{k=0}^{n-1}\sum_{j=0}^{k}c_q(j)c_q(k-j)\equiv q[n]^2 \pmod{[n]\Phi_n(q)^2},  \label{eq:li-1}
\end{equation}
where $c_q(k)=(-1)^kq^{k^2}[4k+1](q;q^2)_k^3/(q^2;q^2)_k^3$.

The aim of this paper is to give four $q$-supercongruences of this kind. Our first result can be stated as follows.

\begin{theorem}\label{thm1}
Let $n$ be a positive odd integer, and for $k\geqslant 0$,
$$
c_q(k)=[3k+1]\frac{(q;q^2)_k^3 q^{-{k+1\choose 2}}}{(q;q)_k^2(q^2;q^2)_k}.
$$
Then
\begin{equation}
\sum_{k=0}^{n-1}\sum_{j=0}^{k}c_q(j)c_q(k-j)\equiv q[n]^2\pmod{[n]\Phi_n(q)^2}. \label{eq:thm1}
\end{equation}
\end{theorem}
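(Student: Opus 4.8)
The plan is to combine a single-sum companion supercongruence, obtained by creative microscoping, with the Cauchy-product structure of the left-hand side, splitting the modulus by the Chinese remainder theorem. Since $[n]=\prod_{d\mid n,\,d>1}\Phi_d(q)$, we have $[n]\Phi_n(q)^2=\Phi_n(q)^3\prod_{d\mid n,\,1<d<n}\Phi_d(q)$, and these factors are pairwise coprime as polynomials. Hence it suffices to prove \eqref{eq:thm1} modulo $\Phi_n(q)^3$ and modulo $\Phi_d(q)$ for each divisor $d$ of $n$ with $1<d<n$.

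First I would establish the companion single-sum congruence
$$\sum_{k=0}^{n-1}c_q(k)\equiv q^{(1-n)/2}[n]\pmod{\Phi_n(q)^2},$$
up to a sign that is immaterial below since it is squared away. For this I would introduce the extra parameter $a$ via
$$c_q(k;a)=[3k+1]\frac{(aq;q^2)_k(q/a;q^2)_k(q;q^2)_k}{(q;q)_k^2(q^2;q^2)_k}q^{-\binom{k+1}{2}},$$
so that $c_q(k;1)=c_q(k)$, and show that $\sum_{k=0}^{n-1}c_q(k;a)$ is congruent to an explicit closed form modulo $(1-aq^n)(a-q^n)$. At the specializations $a=q^{\pm n}$ the deformed factor $(aq;q^2)_k$ or $(q/a;q^2)_k$ vanishes for $k\ge(n+1)/2$, so the sum terminates at $k=(n-1)/2$ and can be evaluated by a terminating basic hypergeometric summation. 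Setting $a=1$ and using that $(1-aq^n)(a-q^n)\big|_{a=1}=(1-q^n)^2$ is divisible by $\Phi_n(q)^2$ then yields the displayed congruence.

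Next I would pass to the square. Writing $S=\sum_{k=0}^{n-1}c_q(k)$ and reindexing, the left-hand side of \eqref{eq:thm1} equals $\sum_{i+j\le n-1}c_q(i)c_q(j)=S^2-R$, where $R=\sum_{i+j\ge n,\ i,j\le n-1}c_q(i)c_q(j)$. The key input of El Bachraoui's lemma is the observation that $(q;q^2)_k$ is divisible by $\Phi_n(q)$ once $k\ge(n+1)/2$, while the denominator $(q;q)_k^2(q^2;q^2)_k$ is a $\Phi_n(q)$-unit for $k\le n-1$; hence $c_q(k)\equiv0\pmod{\Phi_n(q)^3}$ for $(n+1)/2\le k\le n-1$. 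Since every term of $R$ has $\max(i,j)\ge(n+1)/2$, we get $R\equiv0\pmod{\Phi_n(q)^3}$ and therefore $\sum_{i+j\le n-1}c_q(i)c_q(j)\equiv S^2\pmod{\Phi_n(q)^3}$. Squaring the companion congruence gives $S^2\equiv q^{1-n}[n]^2\pmod{\Phi_n(q)^3}$, and because $q^{1-n}[n]^2-q[n]^2=q^{1-n}(1-q)[n]^3$ is divisible by $[n]^3$ and a fortiori by $\Phi_n(q)^3$, we conclude $S^2\equiv q[n]^2\pmod{\Phi_n(q)^3}$, which settles the $\Phi_n(q)^3$ part.

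It remains to treat $\Phi_d(q)$ for each $d\mid n$ with $1<d<n$. Here $\Phi_d(q)\mid[n]$ forces $q[n]^2\equiv0$ and, by the companion congruence, $S\equiv0\pmod{\Phi_d(q)}$, so the target reduces to $R\equiv0\pmod{\Phi_d(q)}$; fixing a primitive $d$-th root of unity $\zeta$ (with $d$ odd), I would split the index as $k=\ell d+r$, use the factorization of $c_\zeta(\ell d+r)$ and the vanishing $c_\zeta(r)=0$ for $(d+1)/2\le r\le d-1$ to annihilate the surviving blocks of $R$ at $q=\zeta$, and finally recombine everything by the Chinese remainder theorem. I expect the main obstacle to be the creative-microscoping step, namely choosing the right deformation $c_q(k;a)$ and pinning down the exact terminating summation that evaluates $\sum_{k=0}^{(n-1)/2}c_q(k;q^{\pm n})$ in closed form; the divisor analysis is also delicate because $(q;q)_k^2(q^2;q^2)_k$ itself vanishes at $q=\zeta$ once $k\ge d$, so these cancellations must be tracked carefully, whereas the Cauchy-product bookkeeping and the recombination are routine.
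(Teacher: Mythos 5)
Your proposal is correct, and it reaches \eqref{eq:thm1} by a genuinely different organization than the paper. The paper first proves a full parametric double-sum congruence (Theorem \ref{thm:thm-a1}): with your deformation $c_q(k;a)$, the Cauchy product is shown to be $\equiv q^{1-n}[n]^2$ modulo $[n](1-aq^n)(a-q^n)$, by evaluating \emph{exactly} at $a=q^{\pm n}$ --- where the tail vanishes identically for $(n+1)/2\leqslant k\leqslant n-1$ and the first part of Lemma \ref{Mohamed-Lemma1-2019} turns the double sum into the exact square of \eqref{eq:a1} --- together with a root-of-unity argument for the modulus $[n]$; the Chinese remainder theorem is applied to the three pairwise coprime factors $[n]$, $1-aq^n$, $a-q^n$, and only then is $a=1$ specialized, using $\mathrm{lcm}(\Phi_n(q)^3,[n])=[n]\Phi_n(q)^2$. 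You instead set $a=1$ at the outset and split the modulus cyclotomically as $[n]\Phi_n(q)^2=\Phi_n(q)^3\prod_{d\mid n,\,1<d<n}\Phi_d(q)$: your $\Phi_n(q)^3$ part replaces the paper's exact square evaluation by the factoring trick $S^2-q^{1-n}[n]^2=\bigl(S-q^{(1-n)/2}[n]\bigr)\bigl(S+q^{(1-n)/2}[n]\bigr)$, whose two factors contribute $\Phi_n(q)^2$ and $\Phi_n(q)$ respectively, combined with the third-order vanishing of $c_q(k)$ for $(n+1)/2\leqslant k\leqslant n-1$ to kill the correction term $R$ --- all of which checks out and is closer in spirit to El Bachraoui's original derivation. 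What the paper's route buys is the parametric statement \eqref{eq:thm-a1} as a by-product of independent interest; what yours buys is that no double-sum congruence for generic $a$ ever needs to be formulated. Your ``main obstacle'' is in fact a known input: the terminating evaluation you need is exactly \eqref{eq:a1}, obtained by letting $b\to0$ in \cite[Theorem 4.8]{GuoZu} and then setting $a=q^n$ in \eqref{eq:guo-1}.

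Two small repairs. First, your companion congruence as stated (modulo $\Phi_n(q)^2$ only) carries no information modulo $\Phi_d(q)$ for $1<d<n$, so it cannot justify $S\equiv0\pmod{\Phi_d(q)}$; you need either the stronger modulus $[n]\Phi_n(q)^2$, which the microscoping argument actually delivers, or the observation, already implicit in your block decomposition, that $S(\zeta)=\bigl(\sum_{l}c_\zeta(ld)\bigr)\sum_{r=0}^{d-1}c_\zeta(r)$. Second, the vanishing of the tail $c_\zeta(r)$ for $(d+1)/2\leqslant r\leqslant d-1$ does \emph{not} by itself annihilate the surviving blocks of $R$ at $q=\zeta$: after decoupling, $R(\zeta)$ carries the factor $\bigl(\sum_{r=0}^{d-1}c_\zeta(r)\bigr)^2$, and you must invoke the $n=d$ case of \eqref{eq:guo-1} (as the paper does) to see that this inner sum equals $\zeta^{(1-d)/2}[d]\big|_{q=\zeta}=0$. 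Both points are easily patched and do not affect the soundness of your plan.
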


Note that the first author \cite{Guo-div} obtained the following $q$-supercongruence:
\begin{align*}
\sum_{k=0}^{n-1}[3k+1]\frac{(q;q^2)_k^3 q^{-{k+1\choose 2}}}{(q;q)_k^2(q^2;q^2)_k}
\equiv q^{(1-n)/2} \pmod{[n]\Phi_n(q)^2},
\end{align*}
which is a $q$-analogue of a `divergent' Ramanujan-type supercongruence of Guillera and Zudilin \cite{GZ}.

Our second result is a $q$-supercongruence similar to Theorem \ref{thm1}.
\begin{theorem}\label{thm2}
Let $n$ be a positive odd integer, and for $k\geqslant 0$,
$$
c_q(k)=(-1)^k[3k+1]\frac{(q;q^2)_k^3}{(q;q)_k^3}.
$$
Then
\begin{equation}
\sum_{k=0}^{n-1}\sum_{j=0}^{k}c_q(j)c_q(k-j)\equiv q^{(n+1)/2}[n]^2\pmod{[n]\Phi_n(q)^2}. \label{eq:thm2}
\end{equation}
\end{theorem}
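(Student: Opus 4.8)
The plan is to follow the three-ingredient strategy behind \eqref{eq:el} and Theorem~\ref{thm1}: creative microscoping to evaluate the underlying single sum, El Bachraoui's lemma to turn the truncated convolution into that sum squared, and the Chinese remainder theorem to assemble the modulus. Re-indexing gives $T:=\sum_{k=0}^{n-1}\sum_{j=0}^{k}c_q(j)c_q(k-j)=\sum_{i+l\leqslant n-1}c_q(i)c_q(l)$. Since $n$ is odd, I would factor the modulus into pairwise coprime pieces $[n]\Phi_n(q)^2=\Phi_n(q)^3\prod_{d\mid n,\,1<d<n}\Phi_d(q)$, so that by the Chinese remainder theorem for coprime polynomials it suffices to prove
\begin{align}
T&\equiv q^{(n+1)/2}[n]^2 \pmod{\Phi_n(q)^3}, \label{eq:planI}\\
T&\equiv 0 \pmod{\Phi_d(q)} \quad (d\mid n,\ 1<d<n), \label{eq:planII}
\end{align}
the right-hand side of \eqref{eq:thm2} being $\equiv0$ modulo each such $\Phi_d(q)$ because $\Phi_d(q)\mid[n]$.

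For \eqref{eq:planI}, the structural input is that on the range $(n+1)/2\leqslant k\leqslant n-1$ the numerator $(q;q^2)_k^3$ acquires the factor $(1-q^n)^3$ while the denominator $(q;q)_k^3$ stays coprime to $\Phi_n(q)$, so $c_q(k)\equiv0\pmod{\Phi_n(q)^3}$ there. Every term deleted in passing from $\bigl(\sum_{k=0}^{n-1}c_q(k)\bigr)^2$ to $T$ has $i+l\geqslant n$, hence carries one such factor and is invisible modulo $\Phi_n(q)^3$; this is the role of El Bachraoui's lemma here, and it gives $T\equiv\bigl(\sum_{k=0}^{n-1}c_q(k)\bigr)^2\pmod{\Phi_n(q)^3}$. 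It then remains to evaluate the single sum, which I would do by creative microscoping: introduce a parameter $a$ deforming $c_q(k)$ to $c_q(k;a)$ with $c_q(k;1)=c_q(k)$, prove a closed-form evaluation of $\sum_{k=0}^{n-1}c_q(k;a)$ modulo $(1-aq^n)(a-q^n)$ by a suitable terminating $q$-hypergeometric summation, and let $a\to1$. The expected outcome is $\sum_{k=0}^{n-1}c_q(k)\equiv\bigl(\frac{-1}{n}\bigr)q^{c}[n]\pmod{[n]\Phi_n(q)^2}$ with $2c\equiv(n+1)/2\pmod n$ (the value $c=(n-1)^2/4$ fits and matches the direct check at $n=3$, where the sum equals $-q[3]$). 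Squaring, and noting that $q^{2c}[n]^2\equiv q^{(n+1)/2}[n]^2\pmod{\Phi_n(q)^3}$ since $q^{2c}-q^{(n+1)/2}$ is divisible by $[n]$, then yields \eqref{eq:planI}; for $n=3$ this gives $T\equiv q^2[3]^2$, as required.

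For \eqref{eq:planII}, I would argue as in the treatments of \eqref{eq:el} and \eqref{eq:li-1}. Writing $n=dm$ and using $q^d\equiv1\pmod{\Phi_d(q)}$, the summand $c_q(k)$ becomes quasi-periodic in $k$ with period $d$; the length-$d$ block sum $\sum_{k=0}^{d-1}c_q(k)$ is divisible by $\Phi_d(q)$ (this is the single-sum evaluation with $n$ replaced by $d$), and propagating this through the block structure forces the truncated inner sums $\sum_{l=0}^{n-1-i}c_q(l)$, and hence $T=\sum_{i=0}^{n-1}c_q(i)\sum_{l=0}^{n-1-i}c_q(l)$, to vanish modulo $\Phi_d(q)$. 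Note that \eqref{eq:planII} is vacuous when $n$ is prime.

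I expect the genuine difficulty to lie in \eqref{eq:planI}, and specifically in the creative-microscoping step: selecting the correct one-parameter deformation $c_q(k;a)$ and the exact terminating identity (e.g.\ of $q$-Dixon or Watson type) that makes $\sum_{k}c_q(k;a)$ collapse, and then checking that the $a\to1$ limit really lifts the congruence from $\Phi_n(q)^2$ to the cube $\Phi_n(q)^3$ with the asserted power of $q$. A secondary delicate point is \eqref{eq:planII}, where the incomplete final block makes the interaction between the truncation $n-1$ and the period $d$ require care; by contrast, once the divisibility ranges above are in hand, the reduction via El Bachraoui's lemma is routine.
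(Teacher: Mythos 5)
Your overall architecture (split the modulus cyclotomically, square the single sum for the top power, use quasi-periodicity at roots of unity for the proper divisors) is sound, and your part~(I) is correct and even a legitimate shortcut past the paper's route: since $c_q(k)\equiv 0\pmod{\Phi_n(q)^3}$ for $(n+1)/2\leqslant k\leqslant n-1$ and every term dropped in passing from $\bigl(\sum_{k=0}^{n-1}c_q(k)\bigr)^2$ to $T$ has an index $\geqslant (n+1)/2$, one indeed gets $T\equiv\bigl(\sum_{k=0}^{n-1}c_q(k)\bigr)^2\pmod{\Phi_n(q)^3}$ (note this direct tail argument does not actually invoke Lemma~\ref{Mohamed-Lemma1-2019}, whose first part needs exact vanishing); your guessed evaluation is exactly the known congruence $\sum_{k=0}^{n-1}c_q(k)\equiv(-q)^{(n-1)^2/4}[n]\pmod{[n]\Phi_n(q)^2}$ quoted in the paper from \cite{Guo-div}, whose parametric form is \eqref{eq:guo-2} (deduced from \cite[Theorem 6.1]{GS2} with $b,c\to0$), and $(n-1)^2/2\equiv(n+1)/2\pmod{n}$ closes part~(I). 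However, your part~(II) contains a step that is simply false: the truncated inner sums $\sum_{l=0}^{n-1-i}c_q(l)$ do \emph{not} all vanish modulo $\Phi_d(q)$. Already $i=n-1$ gives inner sum $c_q(0)=1$. At a primitive $d$-th root of unity $\zeta$ only complete period sums vanish; the partial sums $\sum_{l=0}^{k}c_\zeta(l)$ with $k<(d-1)/2$ are in general nonzero, so "propagating through the block structure" in the form you state breaks down precisely at the incomplete final block you yourself flagged as delicate.

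The repair is the second statement of Lemma~\ref{Mohamed-Lemma1-2019}, which is how the paper argues: quasi-periodicity $c_\zeta(ld+k)=c_\zeta(ld)c_\zeta(k)$ factors the \emph{convolution} over full blocks,
\begin{equation*}
\sum_{m=0}^{n-1}\sum_{j=0}^{m}c_\zeta(j)c_\zeta(m-j)
=\Bigg(\sum_{l=0}^{n/d-1}\sum_{i=0}^{l}c_\zeta(id)c_\zeta((l-i)d)\Bigg)\sum_{k=0}^{d-1}\sum_{j=0}^{k}c_\zeta(j)c_\zeta(k-j),
\end{equation*}
and the last factor equals $\bigl(\sum_{k=0}^{d-1}c_\zeta(k)\bigr)^2=0$ by the $n=d$ case of \eqref{eq:a2}; this gives divisibility by $\Phi_d(q)$ for all $d\mid n$, $d>1$, i.e.\ by $[n]$, with no pointwise claim about truncated inner sums. (Alternatively your term-by-term route can be rescued by a support analysis: $c_\zeta(i)\neq 0$ forces $i\bmod d\leqslant (d-1)/2$, while $\sum_{l=0}^{k}c_\zeta(l)=0$ for $k\geqslant (d-1)/2$, and $(n-1-i)\bmod d=d-1-(i\bmod d)\geqslant (d-1)/2$, so each product $c_\zeta(i)\sum_{l=0}^{n-1-i}c_\zeta(l)$ vanishes individually --- but that pairing, not the blanket vanishing you assert, is what does the work.) With (II) so repaired your proof goes through; note the paper instead proves the stronger parametric double-sum congruence \eqref{eq:thm-a2} modulo $[n](1-aq^n)(a-q^n)$, using the exact identity \eqref{eq:a2} at $a=q^{\pm n}$ together with Lemma~\ref{Mohamed-Lemma1-2019}, and then sets $a=1$, which yields the parametric generalization for free where your route only delivers \eqref{eq:thm2} itself.
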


Note that the first author \cite{Guo-div} also obtained the following $q$-supercongruence:
\begin{align*}
\sum_{k=0}^{n-1}(-1)^k[3k+1]\frac{(q;q^2)_k^3}{(q;q)_k^3}
\equiv (-q)^{(n-1)^2/4}[n] \pmod{[n]\Phi_n(q)^2},
\end{align*}
which is a $q$-analogue of another `divergent' Ramanujan-type supercongruence proved by Guillera and Zudilin \cite{GZ}.

Our third result generalizes \eqref{eq:li-1} to the modulus $[n]\Phi_n(q)^3$ case.
\begin{theorem}\label{thm3}
Let $n$ be a positive odd integer, and for $k\geqslant 0$,
$$
c_q(k)=(-1)^kq^{k^2}[4k+1]\frac{(q;q^2)_k^3}{(q^2;q^2)_k^3}.
$$
Then
\begin{equation}
\sum_{k=0}^{n-1}\sum_{j=0}^{k}c_q(j)c_q(k-j)\equiv q^{(n-1)^2/2}[n]^2\pmod{[n]\Phi_n(q)^3}. \label{eq:thm3}
\end{equation}
\end{theorem}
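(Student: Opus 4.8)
The plan is to reduce the truncated square to a squared single-sum $q$-supercongruence, then to control the off-diagonal part of the Cauchy product, and finally to assemble the local factors of the modulus by the Chinese remainder theorem. Write $D_n$ for the left-hand side of \eqref{eq:thm3}, $T_n=\sum_{k=0}^{n-1}c_q(k)$ for the truncated single sum, and $\zeta$ for a primitive $n$-th root of unity; assume $n>1$, the case $n=1$ being trivial. Since $[n]=\Phi_n(q)\prod_{d\mid n,\,1<d<n}\Phi_d(q)$, the modulus splits as $[n]\Phi_n(q)^3=\Phi_n(q)^4\,P(q)$ with $P(q)=\prod_{d\mid n,\,1<d<n}\Phi_d(q)$ coprime to $\Phi_n(q)$. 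By the Chinese remainder theorem for coprime polynomials it then suffices to prove \eqref{eq:thm3} modulo $\Phi_n(q)^4$ and, separately, modulo $\Phi_d(q)$ for each $d\mid n$ with $1<d<n$; as $[n]^2$ is divisible by $\Phi_d(q)^2$, the right-hand side vanishes modulo $P(q)$, so the latter reduces to $D_n\equiv0\pmod{\Phi_d(q)}$.

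For the main factor $\Phi_n(q)^4$ I would first establish, by the creative microscoping method, the single-sum $q$-supercongruence
\[
T_n\equiv\Bigl(\tfrac{-1}{n}\Bigr)q^{(n-1)^2/4}[n]\pmod{\Phi_n(q)^3},
\]
a $q$-analogue of the Ramanujan-type supercongruence $\sum_{k=0}^{p-1}(-1)^k(4k+1)(1/2)_k^3/k!^3\equiv(-1)^{(p-1)/2}p\pmod{p^3}$: one introduces a parameter $a$, proves the companion congruence modulo $(1-aq^n)(a-q^n)$ by a suitable $q$-hypergeometric summation, and lets $a\to1$. The decisive feature is that the right-hand side is divisible by $[n]$, hence by $\Phi_n(q)$. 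Writing $T_n=L+\Phi_n(q)^3E$ with $L=(\tfrac{-1}{n})q^{(n-1)^2/4}[n]$ and squaring, the cross term $2L\,\Phi_n(q)^3E$ inherits an extra factor $\Phi_n(q)$ from $L$, whence
\[
T_n^2\equiv L^2=q^{(n-1)^2/2}[n]^2\pmod{\Phi_n(q)^4}.
\]
Thus squaring a statement valid modulo $\Phi_n(q)^3$ automatically yields one valid modulo $\Phi_n(q)^4$; this is precisely the mechanism that upgrades the modulus $[n]\Phi_n(q)^2$ of \eqref{eq:li-1} to $[n]\Phi_n(q)^3$.

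It remains to pass from $T_n^2=\sum_{0\le j,l\le n-1}c_q(j)c_q(l)$ to $D_n=\sum_{j+l\le n-1}c_q(j)c_q(l)$, whose difference is the tail $\sum_{j+l\ge n,\;0\le j,l\le n-1}c_q(j)c_q(l)$; here a lemma of El Bachraoui is the main tool. Since $(q;q^2)_k$ is divisible by $\Phi_n(q)$ exactly for $k\ge(n+1)/2$, while $(q^2;q^2)_k$ is prime to $\Phi_n(q)$ for $k\le n-1$, the factor $(q;q^2)_k^3$ contributes $\Phi_n(q)^3$ as soon as $k\ge(n+1)/2$, and every tail pair has at least one index in that range. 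Pairs with both indices large are divisible by $\Phi_n(q)^6$ and may be discarded modulo $\Phi_n(q)^4$. The delicate contribution comes from the mixed pairs, with $j\in[(n+1)/2,n-1]$, $l\le(n-1)/2$ and $j+l\ge n$: each is divisible by $\Phi_n(q)^3$, but only for the special indices with $n\mid 4j+1$ (where $[4j+1]$ supplies an extra factor) is the individual term divisible by $\Phi_n(q)^4$.

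Consequently the main obstacle is to show that, although each mixed term carries only $\Phi_n(q)^3$, their sum is divisible by $\Phi_n(q)^4$. Dividing by $\Phi_n(q)^3$ and evaluating at $q=\zeta$ turns this into the finite identity $\sum\bigl[c_q(j)/\Phi_n(q)^3\bigr]_{q=\zeta}\,c_\zeta(l)=0$, and El Bachraoui's lemma is exactly what yields a closed evaluation of $\bigl[c_q(j)/\Phi_n(q)^3\bigr]_{q=\zeta}$, through a complementation formula for the $q$-shifted factorials at $q=\zeta$ relating the index $j$ to $n-1-j$, after which the double sum telescopes to zero. Together with $T_n^2\equiv L^2$ this gives $D_n\equiv q^{(n-1)^2/2}[n]^2\pmod{\Phi_n(q)^4}$. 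Finally, the divisor congruences $D_n\equiv0\pmod{\Phi_d(q)}$ follow from the same parameter specialised at primitive $d$-th roots of unity, exploiting the periodicity of $c_q(k)$ there, and the Chinese remainder theorem combines all the local information into \eqref{eq:thm3}.
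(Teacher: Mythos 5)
Your reductions are correct up to the crux, and several of them do match the paper's architecture: the splitting $[n]\Phi_n(q)^3=\Phi_n(q)^4\prod_{d\mid n,\,1<d<n}\Phi_d(q)$, the treatment modulo each $\Phi_d(q)$ via roots of unity, periodicity and Lemma \ref{Mohamed-Lemma1-2019}, and the observation that squaring the known single-sum congruence $T_n\equiv(-q)^{(n-1)^2/4}[n]\pmod{\Phi_n(q)^3}$ yields $T_n^2\equiv q^{(n-1)^2/2}[n]^2\pmod{\Phi_n(q)^4}$ because $\Phi_n(q)\mid[n]$. But this correctly isolates, rather than resolves, the hard point: the mixed tail $\sum c_q(j)c_q(l)$ over $(n+1)/2\leqslant j\leqslant n-1$, $0\leqslant l\leqslant(n-1)/2$, $j+l\geqslant n$, each term divisible by exactly $\Phi_n(q)^3$, must be shown divisible by $\Phi_n(q)^4$, and here your argument has a genuine gap. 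First, the tool you invoke cannot do the job: El Bachraoui's lemma (Lemma \ref{Mohamed-Lemma1-2019}) is a purely formal statement about Cauchy products of sequences that vanish on $[(d+1)/2,d-1]$ or satisfy $c(ld+k)/c(ld)=c(k)$; it contains no complementation formula and yields no evaluation of $[c_q(j)/\Phi_n(q)^3]_{q=\zeta}$. Second, for the pairing $(j,l)\mapsto((3n-1)/2-j,(n-1)/2-l)$ (which does swap $\{j+l\geqslant n\}$ with $\{j+l\leqslant n-1\}$ inside the mixed rectangle, since $j'+l'=2n-1-(j+l)$) to kill the tail, you would need the \emph{refined} antisymmetry $c_q(j)\equiv-c_q((3n-1)/2-j)\pmod{\Phi_n(q)^4}$ on the upper range, i.e.\ antisymmetry of the cofactors $[c_q(j)/(1-q^n)^3]_{q=\zeta}$. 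The only complementation available, \eqref{eq:gs} and its consequences \eqref{eq:zqk-1}--\eqref{eq:zqk-2}, holds merely modulo $\Phi_n(q)$, and on the upper range at $a=1$ it degenerates to the vacuous statement $0\equiv0$. So ``the double sum telescopes to zero'' is an assertion in place of the theorem's actual content; nothing in your sketch supplies the required mod-$\Phi_n(q)^4$ information about the upper-range terms.

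The paper circumvents this tail analysis entirely by never specializing the parameters too early. It introduces a second parameter $b$ through $z_q(k)=(-1)^k[4k+1]\frac{(aq,q/a,q/b;q^2)_k}{(aq^2,q^2/a,bq^2;q^2)_k}b^kq^{k^2}$ and the two Watson-type congruences \eqref{eq:false-1} (modulo $[n](1-aq^n)(a-q^n)$) and \eqref{eq:false-2} (modulo $b-q^n$). For $a=q^{\pm n}$ the upper-range terms vanish \emph{identically}, so by Lemma \ref{Mohamed-Lemma1-2019} the double sum is an exact square with no tail to estimate; the congruence modulo $\Phi_n(q)$ is obtained for \emph{generic} $a,b$, where \eqref{eq:zqk-1}--\eqref{eq:zqk-2} are non-vacuous, via the pairing Lemma \ref{lemma:zero}; the Chinese remainder theorem with \eqref{eq:ab-1}--\eqref{eq:ab-2} glues these into \eqref{eq:abqn}; and the extra power of $\Phi_n(q)$ that your tail argument was supposed to produce comes instead from setting $b=1$, where $b-q^n$ becomes $1-q^n\supset\Phi_n(q)$, together with the identity \eqref{eq:relation}, after which $a=1$ turns $(1-aq^n)(a-q^n)$ into $(1-q^n)^2$ and delivers the modulus $[n]\Phi_n(q)^3$. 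To salvage your route you would have to prove the refined upper-range complementation (equivalently, evaluate $c_q(j)$ modulo $\Phi_n(q)^4$ for $j\geqslant(n+1)/2$) by hand; the parametric CRT machinery of Theorem \ref{thm-a3} is precisely the device that makes this unnecessary.
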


We point out that the first author \cite{Guo-2018} has proved that
$$
\sum_{k=0}^{(n-1)/2}(-1)^kq^{k^2}[4k+1]\frac{(q;q^2)_k^3}{(q^2;q^2)_k^3}
\equiv (-q)^{(n-1)^2/4}[n] \pmod{[n]\Phi_n(q)^2},
$$
which is a $q$-analogue of the (B.2) supercongruence of Van Hamme \cite{Hamme}. Moreover,
the first author and Wang \cite{GW} established the following $q$-supercongruence:
\begin{align}
\sum_{k=0}^{(n-1)/2}[4k+1]\frac{(q;q^2)_k^4}{(q^2;q^2)_k^4}
\equiv q^{(1-n)/2}[n]+\frac{(n^2-1)(1-q)^2}{24}q^{(1-n)/2}[n]^3 \pmod{[n]\Phi_n(q)^3},  \label{eq:gw}
\end{align}
which is a $q$-analogue of \cite[Theorem 1.1 with $r=1$]{Long} and is also a generalization of the (C.2)
supercongruence of Van Hamme \cite{Hamme}.

Our fourth result in this paper is related to \eqref{eq:gw} and can be stated as follows.
\begin{theorem}\label{thm4}
Let $n$ be a positive odd integer, and for $k\geqslant 0$,
$$
c_q(k)=[4k+1]\frac{(q;q^2)_k^4}{(q^2;q^2)_k^4}.
$$
Then
\begin{equation}
\sum_{k=0}^{n-1}\sum_{j=0}^{k}c_q(j)c_q(k-j) \equiv q^{1-n}[n]^2 \pmod{[n]\Phi_n(q)^3}.  \label{eq:thm4}
\end{equation}
\end{theorem}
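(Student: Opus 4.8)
The plan is to reduce the truncated square to the square of a single truncated sum modulo a high power of $\Phi_n(q)$, to evaluate that square using the known partial-sum congruence \eqref{eq:gw}, and then to recover the remaining cyclotomic factors of $[n]\Phi_n(q)^3$ by the Chinese remainder theorem for coprime polynomials.

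First I would record the key vanishing: for $(n+1)/2\leqslant k\leqslant n-1$ the factor $(q;q^2)_k$ contains $1-q^n$ (since $n$ is odd and $k-1\geqslant(n-1)/2$), so $\Phi_n(q)\mid(q;q^2)_k$, while $\Phi_n(q)\nmid(q^2;q^2)_k$ for $k\leqslant n-1$. Hence $c_q(k)\equiv0\pmod{\Phi_n(q)^4}$ for every such $k$. Applying El Bachraoui's lemma, which rewrites $\sum_{k=0}^{n-1}\sum_{j=0}^{k}c_q(j)c_q(k-j)=\sum_{k=0}^{n-1}c_q(k)\sum_{j=0}^{n-1-k}c_q(j)$ and then discards the terms indexed by $k>(n-1)/2$, this gives
\[
\sum_{k=0}^{n-1}\sum_{j=0}^{k}c_q(j)c_q(k-j)\equiv\Bigl(\sum_{k=0}^{(n-1)/2}c_q(k)\Bigr)^{2}\pmod{\Phi_n(q)^4}.
\]
Next I would substitute \eqref{eq:gw}. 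Because $\Phi_n(q)^4\mid[n]\Phi_n(q)^3$, that congruence is valid modulo $\Phi_n(q)^4$, so the truncated sum equals $q^{(1-n)/2}[n]\bigl(1+\tfrac{(n^2-1)(1-q)^2}{24}[n]^2\bigr)$ modulo $\Phi_n(q)^4$. Squaring and using $\Phi_n(q)^2\mid[n]^2$ (whence $[n]^4\equiv0\pmod{\Phi_n(q)^4}$), every correction term is killed and the square collapses to $q^{1-n}[n]^2$. This proves \eqref{eq:thm4} modulo $\Phi_n(q)^4$.

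Then I would invoke the factorisation $[n]\Phi_n(q)^3=\Phi_n(q)^4\prod_{d\mid n,\,1<d<n}\Phi_d(q)$ into pairwise coprime polynomials. By the Chinese remainder theorem it remains to prove, for every divisor $d$ of $n$ with $1<d<n$, that both sides of \eqref{eq:thm4} are $\equiv0\pmod{\Phi_d(q)}$. The right-hand side vanishes at once since $\Phi_d(q)\mid[n]$. For the left-hand side I would track the $\Phi_d$-divisibility of $c_q(k)$ over the whole range $0\leqslant k\leqslant n-1$: writing $d=2e+1$, the factor $(q;q^2)_k^4$ contributes $\Phi_d(q)^4$ each time $k$ passes a residue $\equiv e\pmod d$ while $(q^2;q^2)_k^4$ removes $\Phi_d(q)^4$ each time $k$ passes a multiple of $d$, and I would show the surviving products cancel modulo $\Phi_d(q)$, most cleanly by running the creative microscoping argument locally at the primitive $d$-th roots of unity. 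Assembling the mod-$\Phi_n(q)^4$ congruence with the mod-$\Phi_d(q)$ congruences then yields the claim modulo $[n]\Phi_n(q)^3$.

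The main obstacle is this last step. The $\Phi_n$-analysis is clean precisely because the truncation point $(n-1)/2$ coincides with the first zero of $(q;q^2)_k$; for a proper divisor $d$ the index $k$ sweeps through several full periods of length $d$ inside $[0,n-1]$, so the bookkeeping of which terms $c_q(j)c_q(k-j)$ persist modulo $\Phi_d(q)$ is delicate, and it is here that the auxiliary parameter of the creative microscoping method is needed to force the required cancellation.
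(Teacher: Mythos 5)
Your first two steps are correct, and they take a genuinely different route from the paper. The vanishing $c_q(k)\equiv 0\pmod{\Phi_n(q)^4}$ for $(n+1)/2\leqslant k\leqslant n-1$ (with denominators $(q^2;q^2)_k$ coprime to $\Phi_n(q)$ for $k\leqslant n-1$) does collapse the double sum to $\bigl(\sum_{k=0}^{(n-1)/2}c_q(k)\bigr)^2$ modulo $\Phi_n(q)^4$, and squaring \eqref{eq:gw} kills the correction term since it contributes multiples of $[n]^4$. This proves \eqref{eq:thm4} modulo $\Phi_n(q)^4$ by importing the strong single-sum input \eqref{eq:gw} as a black box. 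The paper never uses \eqref{eq:gw}: it proves the parametric Theorem \ref{thm-a4} by inserting two parameters into $c_q(k)$, evaluating the double sum at $a=q^{\pm n}$ (Lemma \ref{th:4.2}) and at $b=q^n$ (Lemma \ref{lem:2}), gluing the two evaluations with the explicit Chinese-remainder cofactors \eqref{eq:ab-1} and \eqref{eq:ab-2}, handling the modulus $\Phi_n(q)$ by the antisymmetry \eqref{eq:gs} and Lemma \ref{lemma:zero}, and then specializing $b=1$ (via \eqref{eq:relation}) and $a=1$, so that $\Phi_n(q)^3$ arises as $\Phi_n(q)\cdot(1-q^n)^2$ rather than from a fourth-power congruence. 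Your route is shorter but rests on the prior microscoping proof of \eqref{eq:gw} in \cite{GW}; the paper's is self-contained at the parametric level and yields the stronger statement \eqref{eq:thm-a4}.

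The genuine gap is your last step, which you explicitly leave open: the proof that the left-hand side of \eqref{eq:thm4} vanishes modulo $\Phi_d(q)$ for each divisor $d$ of $n$ with $1<d<n$. You call the bookkeeping ``delicate'' and assert that an auxiliary microscoping parameter is needed there, but you supply neither the cancellation argument nor the parametric deformation, so as written your proposal only establishes \eqref{eq:thm4} modulo $\Phi_n(q)^4$, not modulo $[n]\Phi_n(q)^3$. In fact no parameter is needed at this step: it is exactly what Lemma \ref{Mohamed-Lemma1-2019} handles. Let $\zeta$ be a primitive $d$-th root of unity. Counting zeros per period (the numerator $(\zeta;\zeta^2)_k$ gains one zero each time $2j-1$ passes an odd multiple of $d$, first at $j=(d+1)/2$, while the denominator $(\zeta^2;\zeta^2)_k$ gains one at each $j\equiv 0\pmod d$) shows that $c_\zeta(k)=0$ precisely when $k\bmod d$ lies in $[(d+1)/2,\,d-1]$, and that $c_\zeta(ld+k)/c_\zeta(ld)=c_\zeta(k)$ for $0\leqslant k\leqslant d-1$. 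By the first part of Lemma \ref{Mohamed-Lemma1-2019}, $\sum_{k=0}^{d-1}\sum_{j=0}^{k}c_\zeta(j)c_\zeta(k-j)=\bigl(\sum_{k=0}^{d-1}c_\zeta(k)\bigr)^2$, which vanishes by the $n=d$ case of \eqref{eq:gw} because $[d]$ vanishes at $\zeta$; the second part of the lemma then factors the full double sum over $0\leqslant m\leqslant n-1$ through this zero, exactly as in the chain of equalities in the proof of Theorem \ref{thm:thm-a1}. Supplying this root-of-unity argument closes your proof; without it, the claim modulo the factors $\Phi_d(q)$ of $[n]$ is unproven.
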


The rest of the paper is arranged as follows. We shall prove Theorems \ref{thm1} and \ref{thm2} in
the next section. The proofs of Theorems \ref{thm3} and \ref{thm4} will be given in Sections 3 and 4, respectively.
In Section 5, we give four more such $q$-supercongruences.
Finally, in Section 6, we put forward eight open problems. Besides the creative microscoping method and a lemma of El Bachraoui \cite{Mohamed},
we shall also employ the Chinese remainder theorem for relatively prime polynomials to prove Theorems \ref{thm3} and \ref{thm4}.

\section{Proof of Theorems \ref{thm1} and \ref{thm2}}
We require the following two lemmas. Lemma \ref{Mohamed-Lemma1-2019} is easily proved and can be found in  \cite[Lemma 1]{Mohamed}.
\begin{lemma}\label{Mohamed-Lemma1-2019}
Let $d$ be a positive integer and let $\{c(k)\}_{k=0}^\infty$ be a sequence of complex numbers.
If $c(k)=0$ for $(d+1)/2\leqslant k\leqslant d-1$, then
$$
\sum_{k=0}^{d-1}\sum_{j=0}^{k}c(j)c(k-j)
=\Bigg(\sum_{k=0}^{d-1}c(k)\Bigg)^2.
$$
Furthermore, if $c(ld+k)/c(ld)=c(k)$ for all nonnegative integers $k$ and $l$ such that $0\leqslant k\leqslant d-1$, then
  $$\sum_{j=0}^{ld+k}c(j)c(ld+k-j)=\sum_{i=0}^{l}c(id)c((l-i)d)\sum_{j=0}^{k}c(j)c(k-j).$$
\end{lemma}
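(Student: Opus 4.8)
The plan is to treat the two assertions separately, in each case reducing the relevant double sum to a reindexed Cauchy product and then exploiting the prescribed vanishing of $c$. For the first identity I would begin from the full square $\bigl(\sum_{k=0}^{d-1}c(k)\bigr)^2=\sum_{p=0}^{d-1}\sum_{q=0}^{d-1}c(p)c(q)$ and collect the terms according to the value $m=p+q$, which ranges over $0\le m\le 2d-2$. For $0\le m\le d-1$ the side conditions $0\le p,q\le d-1$ are automatic once $p+q=m$, so these terms reproduce exactly the left-hand side $\sum_{k=0}^{d-1}\sum_{j=0}^{k}c(j)c(k-j)$. It then remains to show that the ``tail'' $\sum_{m=d}^{2d-2}\sum_{p+q=m}c(p)c(q)$ vanishes. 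The key observation is that if $p+q=m\ge d$ with $0\le p,q\le d-1$, then $p$ and $q$ cannot both be at most $(d-1)/2$ (otherwise their sum would be at most $d-1$); hence at least one of them lies in the range $[(d+1)/2,d-1]$ on which $c$ is assumed to vanish, so every tail term is zero. (Here I use that $d$ is odd, which is the situation in all of our applications, $d=n$.)

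For the second identity I would argue starting from the right-hand side. Expanding the product and invoking the hypothesis in its multiplicative form $c(id+j)=c(id)c(j)$ and $c((l-i)d+(k-j))=c((l-i)d)c(k-j)$, both valid because $0\le j\le k\le d-1$, I can rewrite
\[
\sum_{i=0}^{l}c(id)c((l-i)d)\sum_{j=0}^{k}c(j)c(k-j)=\sum_{i=0}^{l}\sum_{j=0}^{k}c(id+j)\,c\bigl(ld+k-(id+j)\bigr).
\]
Setting $P=id+j$ shows that the right-hand side equals $\sum_{i=0}^{l}\sum_{P=id}^{id+k}c(P)c(ld+k-P)$, which is precisely a restriction of the single sum $\sum_{P=0}^{ld+k}c(P)c(ld+k-P)$ appearing on the left; the indices omitted are exactly the ``gaps'' $P=id+b$ with $k+1\le b\le d-1$ and $0\le i\le l-1$ (and there are none when $k=d-1$).

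The crux, and the only delicate point, is then to check that every gap term vanishes. For such a $P$ one factors $c(P)=c(id)c(b)$, and writing $ld+k-P=(l-i-1)d+b'$ with $b'=d+k-b$ one gets the companion factor as $c((l-i-1)d)c(b')$, where again $k+1\le b'\le d-1$. Since $b+b'=d+k$, the indices $b$ and $b'$ cannot both be smaller than $(d+1)/2$, so one of $c(b),c(b')$ is zero by the vanishing hypothesis carried over from the first part (again using that $d$ is odd). Hence each gap term drops out and the single sum collapses to the claimed product. I expect the bookkeeping of these index ranges, together with the verification that the gap indices always fall inside the vanishing window $[(d+1)/2,d-1]$, to be the main obstacle; the rest is routine reindexing.
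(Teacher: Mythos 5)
Your proof is correct, and there is nothing in the paper to compare it against line by line: the authors do not prove this lemma at all, deferring to El Bachraoui \cite[Lemma 1]{Mohamed}, and your argument (expanding the square as a Cauchy product grouped by $p+q=m$, then, for the second identity, splitting $P$ into residue blocks $P=id+b$ modulo $d$ and killing the gap terms $k+1\leqslant b\leqslant d-1$ via the vanishing window) is precisely the intended elementary proof of the cited source. Two caveats you flagged are genuinely needed and worth making explicit, since the statement as printed is slightly careless. First, $d$ must be odd: for $d=2$ with $c(0)=c(1)=1$ the vanishing hypothesis is vacuous and the first identity fails ($3\neq 4$). Second, the vanishing hypothesis must be read as standing in the ``furthermore'' clause: multiplicativity alone is not enough, since for $d=2$, $c(2l)=1$, $c(1)=t$, $l=1$, $k=0$ the left-hand side exceeds the right-hand side by the gap term $c(1)^2=t^2$, exactly the term your argument kills with the vanishing window. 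Both caveats are harmless for the paper, because in every application $d>1$ divides an odd $n$ and the sequences $c_\zeta$ satisfy both hypotheses simultaneously, but you were right to isolate them as the only delicate points.
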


\begin{lemma}\label{lem2.1} Let $n$ be a positive odd integer. Then
\begin{align}
\sum_{k=0}^{(n-1)/2}[3k+1]\frac{(q^{1+n},q^{1-n},q;q^2)_k q^{-{k+1\choose 2}}}{(q^{1+n},q^{1-n};q)_k(q^2;q^2)_k}
&=q^{(1-n)/2}[n], \label{eq:a1}\\[5pt]
\sum_{k=0}^{(n-1)/2}(-1)^k[3k+1]\frac{(q^{1+n},q^{1-n},q;q^2)_k }{(q^{1+n},q^{1-n},q;q)_k }
&=(-q)^{(n-1)^2/4}[n].  \label{eq:a2}
\end{align}
\end{lemma}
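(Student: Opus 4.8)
The plan is to treat each of \eqref{eq:a1} and \eqref{eq:a2} as a terminating basic hypergeometric series in the summation index $k$ and to evaluate it in closed form. The key preliminary observation is that, writing $a=q^{n}$, the factor $(q^{1-n};q^{2})_{k}=(q/a;q^{2})_{k}$ in each numerator vanishes as soon as $k\geqslant(n+1)/2$, so the displayed truncation at $(n-1)/2$ is harmless and I may regard the sums as running over all $k\geqslant 0$. One checks easily that in this effective range no denominator vanishes (the factors $(q^{1-n};q)_{k}$ and $(q^{1+n};q)_{k}$ first vanish only at $k\geqslant n$), so the terms are genuine and the tail is identically zero. This terminating structure is exactly what makes the $q$-WZ (creative telescoping) method applicable, and it is the route I would execute first.

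Concretely, for \eqref{eq:a1} I would divide the asserted identity by its right-hand side, so that the claim becomes $\sum_{k\geqslant 0}\widetilde F(n,k)=1$, where $\widetilde F(n,k)$ is the normalized summand. I would then seek a rational certificate $R(n,k)$ for which $G(n,k)=R(n,k)\widetilde F(n,k)$ satisfies the telescoped recurrence $\widetilde F(n+2,k)-\widetilde F(n,k)=G(n,k+1)-G(n,k)$, the shift $n\mapsto n+2$ preserving the parity of $n$. Summing over $k\geqslant 0$ and using that $G$ vanishes at the boundaries shows that $\sum_{k}\widetilde F(n,k)$ is independent of $n$ along the odd integers, so it suffices to verify the base case $n=1$, where the single surviving $k=0$ term gives $1$ on both sides. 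The identical scheme, now with the sign $(-1)^{k}$ built into the summand and with normalization by $(-q)^{(n-1)^{2}/4}[n]$, handles \eqref{eq:a2}; the base case $n=1$ again reduces both sides to $1$.

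A more conceptual alternative, which I would in any case use to discover the certificates, is to recognize each left-hand side as a terminating very-well-poised series, with $[3k+1]$ in the role of the well-poised factor and $q^{-\binom{k+1}{2}}$ (respectively $(-1)^{k}$) supplying the argument. One may then hope to invoke a classical evaluation, such as Watson's transformation reducing a terminating very-well-poised ${}_{8}\phi_{7}$ to a balanced ${}_{4}\phi_{3}$, followed by the terminating $q$-Pfaff--Saalsch\"utz summation, after which the product side should collapse at $a=q^{n}$: the factor $1-q^{n}$ produces $[n]$, while the accumulated powers of $q$ produce the prefactors $q^{(1-n)/2}$ and $(-q)^{(n-1)^{2}/4}$. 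I expect the main obstacle to lie precisely in this parameter matching, because the two summands mix the bases $q$ and $q^{2}$ (through $(q;q^{2})_{k}$ and $(q^{2};q^{2})_{k}$ against $(q^{1\pm n};q)_{k}$); before any single-base summation formula applies I must convert the base-$q$ factors via $(x;q)_{2k}=(x;q^{2})_{k}(xq;q^{2})_{k}$ and then carefully track the resulting half-integer powers of $q$. Getting this normalization right, equivalently verifying the WZ certificate, is the crux of the argument, after which the remaining manipulations are routine.
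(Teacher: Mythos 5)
Your plan stops short of a proof exactly where you yourself say it does. You set up the terminating structure correctly and the base case $n=1$ checks out, but you never exhibit the WZ certificate $R(n,k)$, and the fallback you sketch would not work in the form you describe: both \eqref{eq:a1} and \eqref{eq:a2} are \emph{cubic-type} series, whose linear factor is $[3k+1]$ rather than the very-well-poised factor $(1-aq^{4k})/(1-a)$ in base $q^2$ (which at $a=q$ produces the $[4k+1]$ appearing in the sums of Theorems \ref{thm3} and \ref{thm4}, where the paper \emph{does} invoke Watson's ${}_8\phi_7$ transformation and Jackson's ${}_6\phi_5$ summation). The bases $q$ and $q^2$ are mixed through $(q;q^2)_k^3/\bigl((q;q)_k^2(q^2;q^2)_k\bigr)$, and \eqref{eq:a1} carries the confluent factor $q^{-{k+1\choose 2}}$, the signature of a ``divergent'' limit; so the hoped-for Watson-then-Saalsch\"utz chain does not apply directly, and the step you flag as ``the crux'' is precisely the missing content of the proof. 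A smaller slip: extending the sums to all $k\geqslant 0$ is not harmless, because for $k\geqslant n$ the denominator factor $(q^{1-n};q)_k$ vanishes along with the numerator factor $(q^{1-n};q^2)_k$, giving $0/0$; you should keep the truncation at $(n-1)/2$ (the terms with $(n+1)/2\leqslant k\leqslant n-1$ vanish, as you correctly note) or keep a generic parameter and specialize only at the end.

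For comparison, the paper obtains the lemma with essentially no summation machinery: it quotes the parametric $q$-congruences \eqref{eq:guo-1} (obtained from \cite[Theorem 4.8]{GuoZu} by letting $b\to 0$) and \eqref{eq:guo-2} (from \cite[Theorem 6.1]{GS2} with $b,c\to 0$), both stated modulo $[n](1-aq^n)(a-q^n)$, and then sets $a=q^n$; since the modulus factor $a-q^n$ then vanishes, each congruence collapses to an exact identity, which is \eqref{eq:a1}, respectively \eqref{eq:a2}. This is the standard way exact evaluations are extracted in the creative microscoping framework. If you want a self-contained proof along your lines, the realistic options are either to actually compute and verify a $q$-WZ certificate for the $n\mapsto n+2$ recurrence (plausible, but it must be displayed and checked, including at the boundary where the truncation point moves), or to reproduce what the cited theorems do: start from a transformation carrying an extra free parameter ($b$, and also $c$ for \eqref{eq:a2}) and pass to the limit $b\to 0$ (resp.\ $b,c\to 0$), which is exactly where the factor $q^{-{k+1\choose 2}}$ and the sign $(-1)^k$ in these summands come from.
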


\begin{proof}
Letting $b\to 0$ in \cite[Theorem 4.8]{GuoZu}, we obtain the following $q$-congruence: modulo $[n](1-aq^n)(a-q^n)$,
\begin{equation}
\sum_{k=0}^{(n-1)/2}[3k+1]\frac{(aq,q/a,q;q^2)_k q^{-{k+1\choose 2}}}{(aq,q/a;q)_k(q^2;q^2)_k}
\equiv q^{(1-n)/2}[n].  \label{eq:guo-1}
\end{equation}
Further taking $a=q^n$, we get \eqref{eq:a1}.

Letting $b\to 0$ and $c\to 0$ in \cite[Theorem 6.1]{GS2} (see also \cite[Conjecture 4.6]{GuoZu}), we get another $q$-congruence:
modulo $[n](1-aq^n)(a-q^n)$,
\begin{equation}
\sum_{k=0}^{(n-1)/2}(-1)^k[3k+1]\frac{(aq,q/a,q;q^2)_k }{(aq,q/a,q;q)_k }
\equiv (-q)^{(n-1)^2/4}[n].  \label{eq:guo-2}
\end{equation}
Putting $a=q^n$ in the above $q$-congruence, we arrive at \eqref{eq:a2}.
\end{proof}

We now give a parametric generalization of Theorem \ref{thm1},
which is necessary in our proof of Theorem \ref{thm1} and is also an example for the method of creative microscoping.

\begin{theorem}\label{thm:thm-a1}
Let $n$ be a positive odd integer, and for $k\geqslant 0$,
$$
c_q(k)=[3k+1]\frac{(aq,q/a,q;q^2)_k q^{-{k+1\choose 2}}}{(aq,q/a;q)_k(q^2;q^2)_k}.
$$
Then
\begin{equation}
\sum_{k=0}^{n-1}\sum_{j=0}^{k}c_q(j)c_q(k-j)\equiv q^{1-n}[n]^2\pmod{[n](1-aq^n)(a-q^n)}. \label{eq:thm-a1}
\end{equation}
\end{theorem}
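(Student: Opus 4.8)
The plan is to split the modulus into its three pairwise coprime factors $[n]$, $1-aq^n$ and $a-q^n$, prove the congruence modulo each, and then recombine. Throughout write $S(a)=\sum_{k=0}^{n-1}\sum_{j=0}^{k}c_q(j)c_q(k-j)$, and record the symmetry $c_q(k)\big|_{a\to 1/a}=c_q(k)$ (the product $(aq;\,\cdot\,)_k(q/a;\,\cdot\,)_k$ is unchanged and nothing else involves $a$), so that $S(1/a)=S(a)$.

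For the factors $a-q^n$ and $1-aq^n$ I would specialize. Putting $a=q^n$ turns the numerator factor $(q/a;q^2)_k$ into $(q^{1-n};q^2)_k$, which contains $1-q^{1-n+2(n-1)/2}=1-q^{0}=0$ as soon as $k\geqslant (n+1)/2$; hence $c_q(k)=0$ for $(n+1)/2\leqslant k\leqslant n-1$. The first part of Lemma~\ref{Mohamed-Lemma1-2019} (with $d=n$) then collapses the double sum into a square, which \eqref{eq:a1} evaluates: $S(q^n)=\bigl(\sum_{k=0}^{(n-1)/2}c_q(k)\bigr)^2=\bigl(q^{(1-n)/2}[n]\bigr)^2=q^{1-n}[n]^2$. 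Since the right-hand side is free of $a$, this is exactly the congruence modulo $a-q^n$, and the symmetry $S(1/a)=S(a)$ delivers the one modulo $1-aq^n$ at no extra cost.

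The substantial step is the factor $[n]$, where the right-hand side collapses to $0$, so I must show $S(a)\equiv 0\pmod{\Phi_d(q)}$ for each divisor $d>1$ of $n$. Fix such a $d$ and a primitive $d$-th root of unity $\zeta$. A valuation count at $\zeta$ shows that each $c_q(k)$ is finite there (the zeros contributed by $(q;q^2)_k$ are at least as numerous as the poles from $(q^2;q^2)_k$, because the residue $(d-1)/2$ precedes $d-1$ in every block of $d$ consecutive integers), and that $c_\zeta(r)=0$ for $(d+1)/2\leqslant r\leqslant d-1$. Since $\zeta^{\ell d}=1$, the $q$-shifted factorials factor through the period, yielding the multiplicativity $c_\zeta(\ell d+r)=c_\zeta(\ell d)\,c_\zeta(r)$. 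These are exactly the hypotheses of the second part of Lemma~\ref{Mohamed-Lemma1-2019}, which factors the truncated convolution as $S(\zeta)=\Bigl(\sum_{l=0}^{n/d-1}\sum_{i=0}^{l}c_\zeta(id)c_\zeta((l-i)d)\Bigr)\Bigl(\sum_{r=0}^{d-1}\sum_{j=0}^{r}c_\zeta(j)c_\zeta(r-j)\Bigr)$. The second factor, by the first part of the lemma (using $c_\zeta(r)=0$ for $(d+1)/2\leqslant r\leqslant d-1$), equals $\bigl(\sum_{r=0}^{(d-1)/2}c_\zeta(r)\bigr)^2$, and \eqref{eq:guo-1} with $n$ replaced by $d$ forces $\sum_{r=0}^{(d-1)/2}c_\zeta(r)=0$ at $q=\zeta$, since its value $q^{(1-d)/2}[d]$ is annihilated by $[d]\big|_{q=\zeta}=0$. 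Hence $S(\zeta)=0$, that is $S(a)\equiv 0\pmod{\Phi_d(q)}$, and therefore $S(a)\equiv 0\pmod{[n]}$.

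Combining the three congruences over the pairwise coprime moduli yields \eqref{eq:thm-a1}. I expect the main obstacle to be precisely the $[n]$-part for the proper divisors $d<n$: there the individual summands $c_q(k)$ with $k\geqslant d$ behave ``divergently'' at $\zeta$, so one cannot merely discard a tail as in the clean case $d=n$, and it is the periodic factorization together with the second part of Lemma~\ref{Mohamed-Lemma1-2019} that saves the argument. The supporting valuation bookkeeping—verifying that no $c_q(k)$ actually has a pole at $\zeta$, which is what legitimizes the substitution $q=\zeta$—should be carried out with care.
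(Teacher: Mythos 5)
Your proposal is correct and takes essentially the same route as the paper's proof of Theorem \ref{thm:thm-a1}: the same splitting into the pairwise coprime factors $[n]$, $1-aq^n$ and $a-q^n$, the specializations $a=q^{\pm n}$ evaluated via \eqref{eq:a1} together with the first part of Lemma \ref{Mohamed-Lemma1-2019}, and the root-of-unity vanishing for the $[n]$ part via the periodic multiplicativity, the second part of that lemma, and the $n=d$ case of \eqref{eq:guo-1}. The only cosmetic difference is that you obtain the $1-aq^n$ case from the $a\to 1/a$ symmetry, where the paper simply evaluates at $a=q^{-n}$ directly.
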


\begin{proof}
For $n=1$, the result is clearly true. We now assume that $n$ is an odd integer greater than $1$.
Let $\zeta\neq 1$ be an $n$-th root of unity, not necessarily primitive. Namely, $\zeta$ is a primitive $d$-th root of unity with $d\mid n$ and $d>1$.
It is easy to see that $(\zeta;\zeta^2)_k=0$ for $(d+1)/2\leqslant k\leqslant d-1$ and so $c_\zeta(k)=0$ for $k$ in the same range.
By Lemma \ref{Mohamed-Lemma1-2019}, we obtain
$$
\sum_{k=0}^{d-1}\sum_{j=0}^{k}c_\zeta(j)c_\zeta(k-j)=\left(\sum_{k=0}^{d-1}c_\zeta(k)\right)^2=0,
$$
where the second equality follows from the $n=d$ case of \eqref{eq:guo-1}.
It is not difficult to check that $c_\zeta(ld+k)/c_\zeta(ld)=c_\zeta(k)$ for all integers $k$ and $l$ satisfying $0\leqslant k\leqslant d-1$.
By Lemma \ref{Mohamed-Lemma1-2019} again, we get
\begin{align*}
\sum_{m=0}^{n-1}\sum_{j=0}^{m}c_\zeta(j)c_\zeta(m-j)
&=\sum_{l=0}^{n/d-1}\sum_{k=0}^{d-1}\sum_{j=0}^{ld+k}c_\zeta(j)c_\zeta(ld+k-j)\\
&=\sum_{l=0}^{n/d-1}\sum_{k=0}^{d-1}\sum_{i=0}^{l}\left(c_\zeta(id)c_\zeta((l-i)d)\right)\sum_{j=0}^{k}c_\zeta(j)c_\zeta(k-j)\\
&=\Bigg(\sum_{l=0}^{n/d-1}\sum_{i=0}^{l}c_\zeta(id)c_\zeta((l-i)d)\Bigg)\sum_{k=0}^{d-1}\sum_{j=0}^{k}c_\zeta(j)c_\zeta(k-j)\\
&=0.
\end{align*}
Since the above equality is true for any $n$-th root of unit $\zeta\neq 1$, we conclude that
\begin{equation}\label{eq2.5}
\sum_{k=0}^{n-1}\sum_{j=0}^{k}c_q(j)c_q(k-j)\equiv 0\equiv q^{1-n}[n]^2 \pmod{[n]}.
\end{equation}
Namely, the $q$-congruence \eqref{eq:thm-a1} holds modulo $[n]$.

Moreover, for $a=q^n$ or $a=q^{-n}$, in view of \eqref{eq:a1}, we have
$
\sum_{k=0}^{n-1}c_q(k)=q^{(1-n)/2}[n],
$
and $c_q(k)=0$ for $k$ in the range $(n+1)/2\leqslant k\leqslant n-1$.
Thus, by Lemma \ref{Mohamed-Lemma1-2019}, for $a=q^n$ or $a=q^{-n}$ we get
\begin{equation*}
\sum_{k=0}^{n-1}\sum_{j=0}^{k}c_q(j)c_q(k-j)=\Bigg(\sum_{k=0}^{n-1}c_q(k)\Bigg)^2=q^{1-n}[n]^2.
\end{equation*}
This means that the $q$-congruence \eqref{eq:thm-a1} holds modulo $1-aq^n$ and $a-q^n$.
The proof then follows from the fact that the polynomials $[n]$, $1-aq^n$ and $a-q^n$ are pairwise relatively prime.
\end{proof}

\begin{proof}[Proof of Theorem {\rm\ref{thm1}}]
Setting $a=1$ in \eqref{eq:thm-a1}, and observing that $1-q^n$ contains the factor $\Phi_n(q)$, we
are led to the $q$-congruence \eqref{eq:thm1}  modulo $\Phi_n(q)^3$.
Furthermore, our proof of \eqref{eq2.5} is also true for $a=1$. That is,
the $q$-congruence \eqref{eq:thm1} holds modulo $[n]$. Noticing that the least common multiple of
$\Phi_n(q)^3$ and $[n]$ is just $[n]\Phi_n(q)^2$, we complete the proof of the theorem.
\end{proof}

Similarly, we have a parametric generalization of Theorem \ref{thm2}.
\begin{theorem}
Let $n$ be a positive odd integer, and for $k\geqslant 0$,
$$
c_q(k)=(-1)^k[3k+1]\frac{(aq,q/a,q;q^2)_k }{(aq,q/a,q;q)_k }.
$$
Then
\begin{equation}
\sum_{k=0}^{n-1}\sum_{j=0}^{k}c_q(j)c_q(k-j)\equiv q^{(n-1)^2/2}[n]^2\pmod{[n](1-aq^n)(a-q^n)}. \label{eq:thm-a2}
\end{equation}
\end{theorem}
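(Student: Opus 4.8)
The plan is to mimic the proof of Theorem~\ref{thm:thm-a1} almost verbatim, replacing the single-sum evaluations \eqref{eq:guo-1} and \eqref{eq:a1} by their alternating-series counterparts \eqref{eq:guo-2} and \eqref{eq:a2}. As before, the case $n=1$ is immediate, so I would assume $n$ is odd and greater than $1$ and split the target modulus into the three pairwise coprime factors $[n]$, $1-aq^n$ and $a-q^n$, treating each separately and reassembling by the Chinese remainder theorem for coprime polynomials.

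First I would establish the congruence modulo $[n]$. Fix an $n$-th root of unity $\zeta\neq1$, say a primitive $d$-th root with $d\mid n$ and $d>1$. Since $d$ is odd, the only index $j$ with $1\leqslant j\leqslant d-1$ and $2j-1\equiv0\pmod d$ is $j=(d+1)/2$, so $(\zeta;\zeta^2)_k=0$, and hence $c_\zeta(k)=0$, for $(d+1)/2\leqslant k\leqslant d-1$. The first part of Lemma~\ref{Mohamed-Lemma1-2019} then gives $\sum_{k=0}^{d-1}\sum_{j=0}^{k}c_\zeta(j)c_\zeta(k-j)=\bigl(\sum_{k=0}^{d-1}c_\zeta(k)\bigr)^2$, and the $n=d$ instance of \eqref{eq:guo-2}, whose modulus vanishes at $q=\zeta$ because $[d]_\zeta=0$, forces $\sum_{k=0}^{d-1}c_\zeta(k)=(-\zeta)^{(d-1)^2/4}[d]_\zeta=0$. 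After checking the self-similarity relation $c_\zeta(ld+k)/c_\zeta(ld)=c_\zeta(k)$ for $0\leqslant k\leqslant d-1$, the second part of the lemma propagates the vanishing to the full range, yielding $\sum_{m=0}^{n-1}\sum_{j=0}^{m}c_\zeta(j)c_\zeta(m-j)=0$. As this holds for every $\zeta\neq1$ and $\prod_{\zeta^n=1,\,\zeta\neq1}(q-\zeta)=[n]$, the double sum is divisible by $[n]$; since $q^{(n-1)^2/2}[n]^2\equiv0\pmod{[n]}$, the claim holds modulo $[n]$.

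Next I would handle the two remaining factors simultaneously by specializing $a=q^n$ or $a=q^{-n}$. For these values the factor $(q^{1-n};q^2)_k$ vanishes once $k\geqslant(n+1)/2$, so $c_q(k)=0$ in the range $(n+1)/2\leqslant k\leqslant n-1$, and \eqref{eq:a2} evaluates $\sum_{k=0}^{n-1}c_q(k)=(-q)^{(n-1)^2/4}[n]$. The first part of Lemma~\ref{Mohamed-Lemma1-2019} then turns the double sum into a perfect square, $\bigl((-q)^{(n-1)^2/4}[n]\bigr)^2=q^{(n-1)^2/2}[n]^2$, which is exactly the right-hand side of \eqref{eq:thm-a2}; hence the congruence holds modulo both $1-aq^n$ and $a-q^n$. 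Combining the three cases via the coprimality of $[n]$, $1-aq^n$ and $a-q^n$ completes the argument.

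I would expect no genuinely new obstacle, since the argument is structurally identical to that of Theorem~\ref{thm:thm-a1}; the only points demanding care are the bookkeeping of the vanishing ranges of the $q$-shifted factorials (which differ slightly from the Theorem~\ref{thm1} case because of the alternating sign and the denominator $(q;q)_k$ rather than $(q^2;q^2)_k$) and the verification that squaring $(-q)^{(n-1)^2/4}$ produces the stated power $q^{(n-1)^2/2}$, the sign disappearing because $(n-1)^2/2$ is even for odd $n$. The deeper input, namely the single-sum $q$-congruence \eqref{eq:guo-2}, is imported wholesale from \cite{GS2,GuoZu} and carries the real analytic content.
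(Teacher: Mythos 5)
Your proposal is correct and follows essentially the same route as the paper's own proof: the case analysis modulo $[n]$ via roots of unity together with Lemma \ref{Mohamed-Lemma1-2019} and the $n=d$ case of \eqref{eq:guo-2}, the specializations $a=q^{\pm n}$ handled via \eqref{eq:a2} and the vanishing of $c_q(k)$ for $(n+1)/2\leqslant k\leqslant n-1$, and the final assembly by coprimality of $[n]$, $1-aq^n$ and $a-q^n$. The only cosmetic difference is that you spell out the Chinese remainder step and the sign computation $\bigl((-q)^{(n-1)^2/4}\bigr)^2=q^{(n-1)^2/2}$ explicitly, which the paper leaves implicit.
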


\begin{proof}The proof is very similar to that of Theorem \ref{thm:thm-a1}.
We only consider the $n>1$ case.
Let $\zeta$ be a primitive $d$-th root of unity with $d\mid n$ and $d>1$.
Then $c_\zeta(k)=0$ for $(d+1)/2\leqslant k\leqslant d-1$.
By Lemma \ref{Mohamed-Lemma1-2019}, we get
$$
\sum_{k=0}^{d-1}\sum_{j=0}^{k}c_\zeta(j)c_\zeta(k-j)=\left(\sum_{k=0}^{d-1}c_\zeta(k)\right)^2=0,
$$
where we have used the $n=d$ case of \eqref{eq:guo-2}.
Likewise, $c_\zeta(ld+k)/c_\zeta(ld)=c_\zeta(k)$ is also true for $0\leqslant k\leqslant d-1$.
By Lemma \ref{Mohamed-Lemma1-2019} again, we get
\begin{align*}
\sum_{m=0}^{n-1}\sum_{j=0}^{m}c_\zeta(j)c_\zeta(m-j)
=0,
\end{align*}
from which we conclude that
\begin{equation}\label{eq2.9}
\sum_{k=0}^{n-1}\sum_{j=0}^{k}c_q(j)c_q(k-j)\equiv 0\equiv q^{(1-n)^2/2}[n]^2 \pmod{[n]}.
\end{equation}

For $a=q^n$ or $a=q^{-n}$, in view of \eqref{eq:a2}, we have
$
\sum_{k=0}^{n-1}c_q(k)=(-q)^{(n-1)^2/4}[n],
$
and $c_q(k)=0$ for $(n+1)/2\leqslant k\leqslant n-1$.
Hence, by Lemma \ref{Mohamed-Lemma1-2019}, for $a=q^n$ or $q^{-n}$ we get
\begin{equation*}
\sum_{k=0}^{n-1}\sum_{j=0}^{k}c_q(j)c_q(k-j)=q^{(n-1)^2/2}[n]^2.
\end{equation*}
This proves that the $q$-congruence \eqref{eq:thm-a2} holds modulo $1-aq^n$ and $a-q^n$.
\end{proof}

\begin{proof}[Proof of Theorem {\rm\ref{thm3}}]
Letting $a=1$ in \eqref{eq:thm-a2} and observing that $q^{(n-1)^2/2}\equiv q^{(n+1)/2}\pmod{\Phi_n(q)}$,
we obtain the desired $q$-congruence \eqref{eq:thm2}.
\end{proof}

\section{Proof of Theorem \ref{thm3}}
We first give two assistant results. Both of them can be deduced form Watson's $_8\phi_7$
transformation formula \cite[Appendix (III. 18)]{GR}. For the concrete proofs, see \cite[Section~5]{Guo-mod4}.
\begin{lemma}Let $n$ be a positive odd integer. Then, modulo $[n](1-aq^n)(a-q^n)$,
\begin{align}
&\sum_{k=0}^{(n-1)/2}(-1)^k [4k+1]\frac{(aq,q/a,q/b;q^2)_k}{(aq^2,q^2/a,bq^2;q^2)_k}b^k q^{k^2} \notag\\[5pt]
&\quad\equiv (-q)^{(n-1)^2/4}[n]\sum_{k=0}^{(n-1)/2}\frac{(1-b)(aq,q/a;q^2)_k}{(1-bq^{2k})(q,q^2;q^2)_k}q^k.
\label{eq:false-1}
\end{align}
\end{lemma}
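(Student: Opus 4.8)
The plan is to recognise the left-hand side of \eqref{eq:false-1} as a terminating very-well-poised ${}_8\phi_7$ series in the base $q^2$ and then to apply Watson's transformation \cite[Appendix (III.18)]{GR}. Taking the well-poised parameter to be $q$ produces exactly the factor $[4k+1]$, while the three well-poised pairs $(aq,q^2/a)$, $(q/a,aq^2)$ and $(q/b,bq^2)$ account for the quotient $(aq,q/a,q/b;q^2)_k/(aq^2,q^2/a,bq^2;q^2)_k$. The two remaining parameters of the ${}_8\phi_7$ are disposed of by a confluent limit: fixing one of them equal to $q^2$ cancels the intrinsic factor $(q;q^2)_k/(q^2;q^2)_k$ of a very-well-poised series, and letting the other tend to infinity (while keeping the balancing condition) manufactures the Gaussian factor $q^{k^2}$ together with the sign $(-1)^k$ and the power $b^k$. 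Under the very same limit the ${}_4\phi_3$ on the right of Watson's formula collapses to the sum $\sum_{k\geqslant 0}(b,aq,q/a;q^2)_k\,q^k/(bq^2,q,q^2;q^2)_k$ appearing on the right of \eqref{eq:false-1}, and the Watson prefactor $(Aq^2,Aq^2/(de);q^2)_N/(Aq^2/d,Aq^2/e;q^2)_N$ telescopes, in the limit, to $[n]$ up to a monomial; tracking that monomial yields the factor $(-q)^{(n-1)^2/4}$.

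First I would establish \eqref{eq:false-1} modulo $1-aq^n$ and modulo $a-q^n$. Setting $a=q^{-n}$ turns $(aq;q^2)_k$ into $(q^{1-n};q^2)_k$, and setting $a=q^{n}$ turns $(q/a;q^2)_k$ into $(q^{1-n};q^2)_k$; in either case this factor vanishes for $k\geqslant(n+1)/2$, so the truncated sum on the left of \eqref{eq:false-1} is genuinely a terminating ${}_8\phi_7$ with $N=(n-1)/2$. For such a terminating series Watson's transformation is an exact polynomial identity, so the two sides of \eqref{eq:false-1} agree identically at $a=q^{n}$ and at $a=q^{-n}$. This is precisely the assertion that \eqref{eq:false-1} holds modulo $1-aq^n$ and modulo $a-q^n$.

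It then remains to prove the congruence modulo $[n]$ and to assemble the three pieces. For the modulus $[n]$ I would reduce at each primitive $d$-th root of unity with $d\mid n$ and $d>1$ and verify that, at $q=\zeta$, the relevant $q$-shifted factorials again truncate the two sides compatibly, so that Watson's identity applies modulo $\Phi_d(q)$ for every such $d$; since $[n]=\prod_{d\mid n,\,d>1}\Phi_d(q)$, this gives the congruence modulo $[n]$. Finally, because $[n]$, $1-aq^n$ and $a-q^n$ are pairwise coprime as polynomials in $a$, the Chinese remainder theorem for polynomials combines the three congruences into \eqref{eq:false-1}. I expect the main obstacle to be twofold: carrying out the confluence bookkeeping cleanly enough that the powers of $q$ in the Gaussian term, the sign $(-1)^k$, the weight $b^k$ and especially the prefactor $(-q)^{(n-1)^2/4}[n]$ all come out with the correct exponents; and handling the modulus $[n]$, where, unlike the cases $a=q^{\pm n}$, the summand does not visibly terminate, so one must argue termination locally at each $\Phi_d(q)$ as in the concrete computation of \cite[Section~5]{Guo-mod4}.
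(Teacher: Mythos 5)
Your skeleton is the route the paper itself indicates: the paper gives no in-house proof of this lemma, saying only that it can be deduced from Watson's ${}_8\phi_7$ transformation \cite[Appendix (III.18)]{GR} and deferring the details to \cite[Section~5]{Guo-mod4}. Your handling of the moduli $1-aq^n$ and $a-q^n$ is correct and well bookkept: at $a=q^{\pm n}$ the factor $(q^{1-n};q^2)_k$ makes the left-hand side a genuinely terminating very-well-poised series in base $q^2$ with $a_{\mathrm{vwp}}=q$ and $N=(n-1)/2$; the slot $q^2$ cancels the intrinsic $(q;q^2)_k/(q^2;q^2)_k$; the confluence $e\to\infty$ produces $(-1)^k b^k q^{k^2}$ on the left and turns the argument of the ${}_4\phi_3$ into $q$ on the right; and the surviving prefactor $(q^3;q^2)_{(n-1)/2}/(q^{2-n};q^2)_{(n-1)/2}$ equals $(-1)^{(n-1)/2}q^{(n-1)^2/4}[n]=(-q)^{(n-1)^2/4}[n]$, as you predicted.

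The genuine gap is your modulus-$[n]$ step. You propose to "argue termination locally at each $\Phi_d(q)$", i.e., that at a primitive $d$-th root of unity $\zeta$ (with $d\mid n$, $d>1$) the $q$-shifted factorials truncate so that Watson's identity applies modulo $\Phi_d(q)$. They do not truncate: with $a$ and $b$ indeterminate, the summand $z_q(k)=(-1)^k[4k+1](aq,q/a,q/b;q^2)_k\,b^kq^{k^2}/(aq^2,q^2/a,bq^2;q^2)_k$ contains no factor of the form $(q;q^2)_k$, so $z_\zeta(k)\neq 0$ for every $k$ and no local termination of any kind is available; this is exactly what distinguishes the present lemma from, say, Lemma \ref{th:4.2}, whose summand carries the extra pair $(q,q^2)$. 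The correct mechanism is not truncation but antisymmetry: by \eqref{eq:gs} one has $z_q(k)\equiv -z_q((n-1)/2-k)\pmod{\Phi_n(q)}$ for $0\leqslant k\leqslant (n-1)/2$, together with the companion congruence \eqref{eq:zqk-2} on the range $(n+1)/2\leqslant k\leqslant n-1$; applying these with $n$ replaced by each divisor $d$, and using the periodicity $z_\zeta(ld+k)/z_\zeta(ld)$ of the summand at $q=\zeta$, the sum $\sum_{k=0}^{(n-1)/2}z_\zeta(k)$ splits into full blocks and one half block, each of which vanishes by pairing, whence the left-hand side is $\equiv 0$ modulo $\Phi_d(q)$ for all $d\mid n$ with $d>1$; since the right-hand side visibly carries the factor $[n]$, the congruence modulo $[n]$ follows and your Chinese-remainder assembly then goes through. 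This antisymmetry device is precisely what the present paper deploys in Section 3 (see \eqref{eq:zqk-1}, \eqref{eq:zqk-2} and Lemma \ref{lemma:zero}) for the very same $z_q(k)$; without it, your plan stalls at the modulus $[n]$.
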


\begin{lemma}Let $n$ be a positive odd integer. Then, modulo $b-q^n$,
\begin{align}
\sum_{k=0}^{(n-1)/2}(-1)^k [4k+1]\frac{(aq,q/a,q/b;q^2)_k}{(aq^2,q^2/a,bq^2;q^2)_k}b^k q^{k^2}
\equiv [n]\sum_{k=0}^{(n-1)/2}\frac{(q,q/b;q^2)_k b^k}{(aq^2,q^2/a;q^2)_k}.  \label{eq:false-2}
\end{align}
\end{lemma}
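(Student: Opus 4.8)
The plan is to read the congruence modulo $b-q^n$ as the equality of the two sides at $b=q^n$: after clearing the (nonvanishing) denominators the difference of the two sides is a polynomial in $b$, and divisibility by $b-q^n$ is the same as vanishing at $b=q^n$. At $b=q^n$ the factor $(q/b;q^2)_k=(q^{1-n};q^2)_k$ vanishes for $k>(n-1)/2$, so both sides are genuine terminating sums and it suffices to prove the resulting finite identity.

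To evaluate the left-hand side at $b=q^n$ I would recast it in base $q^2$. Since $[4k+1]=(1-q\,(q^2)^{2k})/(1-q)$, the underlying very-well-poised series has top parameter $q$; yet its intrinsic normalization $(q;q^2)_k/(q^2;q^2)_k$ is absent from the summand, and the summand carries an extra $(-1)^kb^kq^{k^2}$. Both anomalies point to a confluent specialization of the terminating ${}_8\phi_7$ in Watson's transformation \cite[Appendix (III.18)]{GR}. Concretely, I expect the left-hand side (at $b=q^n$) to be the limit $e\to\infty$ of Watson's series with top parameter $q$, free upper parameters $q/a,\,aq,\,q^2,\,e$, and terminating parameter $q^{1-n}$ (so $N=(n-1)/2$). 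The parameter $q^2$ is the crucial one: its numerator factorial $(q^2;q^2)_k$ and its very-well-poised partner $q$ together cancel the intrinsic pair $(q;q^2)_k/(q^2;q^2)_k$, accounting for the missing normalization; meanwhile the argument $q^{2}(q^2)^{N+2}/\big((q/a)(aq)q^2\,e\big)=q^{2N+2}/e$ combines with $(e;q^2)_k\sim(-e)^k q^{k^2-k}$ to produce exactly $(-1)^kq^{nk+k^2}$ as $e\to\infty$. A short check of the partners $q^3/(\,\cdot\,)$ confirms that the three surviving upper parameters match the denominator entries $aq^2,\ q^2/a,\ q^{n+2}$.

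Granting this identification, I would apply Watson's ${}_8\phi_7\to{}_4\phi_3$ transformation and pass to $e\to\infty$ in the prefactor and in the ${}_4\phi_3$ separately. Taking the two parameters that feed the ${}_4\phi_3$ denominator to be $q/a$ and $aq$, and the two that feed its numerator to be $q^2$ and $e$, the ${}_4\phi_3$ acquires the numerator entry $q=q^3/\big((q/a)(aq)\big)$, the entry $q^2$ (whose factorial $(q^2;q^2)_k$ cancels the series' own $(q^2;q^2)_k$), the entry $q^{1-n}$, and the $e$-entry; the $e\to\infty$ limit of the remaining $e$-dependent ratio, times the argument $q^{2k}$, contributes $q^{nk}$, so the series degenerates to $\sum_k (q,q^{1-n};q^2)_k\,q^{nk}/(aq^2,q^2/a;q^2)_k$, which is the right-hand sum at $b=q^n$. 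In Watson's prefactor $(aq,aq/de;q)_N/(aq/d,aq/e;q)_N$ (with top parameter specialized to $q$) the entries involving $e$ tend to $1$, leaving $(q^3;q^2)_{(n-1)/2}/(q;q^2)_{(n-1)/2}=(1-q^n)/(1-q)=[n]$. Multiplying the two limits yields exactly $[n]$ times the target sum, which is the asserted equality at $b=q^n$.

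The main obstacle is precisely this first identification: guessing, from the shape of the summand, the specialization that realizes the non-standard series (missing normalization, extra $q^{k^2}$) as a genuine confluent limit of a terminating ${}_8\phi_7$, i.e. recognizing that one upper parameter must equal $q^2$ and one must escape to infinity. Everything afterwards is routine: tracking the $q$-powers through the limit and verifying the two clean cancellations, namely $(q^2;q^2)_k$ against the ${}_4\phi_3$ normalization and the telescoping of the prefactor to $[n]$. As a consistency check I would confront this specialization with the companion congruence \eqref{eq:false-1}, whose right-hand factor $(-q)^{(n-1)^2/4}[n]$ ought to emerge from the same Watson transformation under the neighbouring choice of parameters, exactly as carried out in \cite[Section~5]{Guo-mod4}.
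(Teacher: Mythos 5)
Your proposal is correct and takes precisely the route the paper prescribes: the paper proves this lemma by setting $b=q^n$ (so that $(q/b;q^2)_k=(q^{1-n};q^2)_k$ truncates both sides) and deducing the resulting terminating identity from Watson's $_8\phi_7$ transformation \cite[Appendix (III.18)]{GR}, with the confluent specialization (one upper parameter $q^2$, one sent to infinity) worked out in \cite[Section~5]{Guo-mod4}. Your parameter identifications, the limit computations producing $(-1)^kq^{k^2+nk}$ and $q^{nk}$, and the collapse of the prefactor to $(q^3;q^2)_{(n-1)/2}/(q;q^2)_{(n-1)/2}=[n]$ all check out.
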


We also need the following lemma.
\begin{lemma}\label{lemma:zero}
Let $n$ be a positive odd integer. Let $a_0,a_1,\ldots,a_{n-1}$ be a sequence of numbers satisfying
$a_k=-a_{(n-1)/2-k}$ for $0\leqslant k\leqslant (n-1)/2$ and $a_{k}=-a_{(3n-1)/2-k}$ for $(n+1)/2\leqslant k\leqslant n-1$.
Then
\begin{equation}
\sum_{k=0}^{n-1}\sum_{i=0}^{k}a_i a_{k-i}=0. \label{eq:zero}
\end{equation}
\end{lemma}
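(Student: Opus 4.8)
The plan is to rewrite the double sum as a sum over lattice points and then exploit the two reflection symmetries separately on the two halves of the index set. Write $m=(n-1)/2$, so that the hypotheses read $a_i=-a_{m-i}$ for $0\le i\le m$ and $a_j=-a_{3m+1-j}$ for $m+1\le j\le 2m=n-1$. Reindexing by $s=i+j$, the left-hand side of \eqref{eq:zero} becomes $\sum a_ia_j$ taken over all ordered pairs $(i,j)$ with $i,j\in\{0,\dots,n-1\}$ and $i+j\le n-1$; the whole task is to show this sum vanishes.

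The first thing I would record is that each half sums to zero. The first symmetry gives $\sum_{i=0}^{m}a_i=-\sum_{i=0}^m a_{m-i}=-\sum_{i=0}^m a_i$, hence $\sum_{i=0}^m a_i=0$, and the identical one-line computation with the second symmetry yields $\sum_{j=m+1}^{2m}a_j=0$. Now split the pairs $(i,j)$ according to whether each coordinate lies in $U=\{0,\dots,m\}$ or $V=\{m+1,\dots,2m\}$. The $U\times U$ block contributes $\bigl(\sum_{i\in U}a_i\bigr)^2=0$, since every such pair automatically satisfies $i+j\le 2m$; the $V\times V$ block is empty, because there $i+j\ge 2m+2>n-1$. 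Hence only the two cross blocks survive, and by the $i\leftrightarrow j$ symmetry the total reduces to $2B$, where $B=\sum a_ia_j$ over $i\in U$, $j\in V$ with $i+j\le 2m$.

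It remains to show $B=0$, which is the only real content. Here I would introduce the involution $\rho(i,j)=(m-i,\,3m+1-j)$ on $U\times V$, noting $\rho$ maps $U\times V$ to itself. The two symmetry relations give $a_{m-i}a_{3m+1-j}=(-a_i)(-a_j)=a_ia_j$, so the summand is $\rho$-invariant; meanwhile the constraint transforms as $i+j\le 2m\iff (m-i)+(3m+1-j)\ge 2m+1$, so $\rho$ (being an involution) bijectively interchanges the two regions $\{i+j\le 2m\}$ and $\{i+j\ge 2m+1\}$ that partition $U\times V$. Consequently the sum over $\{i+j\ge 2m+1\}$ also equals $B$, and adding the two halves recovers the unconstrained product $\bigl(\sum_{i\in U}a_i\bigr)\bigl(\sum_{j\in V}a_j\bigr)=0$. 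Therefore $2B=0$, giving $B=0$, and the whole double sum vanishes.

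The main obstacle is precisely the cross block: the $U\times U$ and $V\times V$ contributions collapse immediately from the ``each half sums to zero'' fact, but the mixed block couples the two different reflections and cannot be dispatched by either symmetry in isolation. The device that makes it work is folding both reflections into the single involution $\rho$, whose sign cancellation fixes the summand while flipping the inequality that defines the truncation. I expect the crux to be locating this involution rather than verifying it, since once $\rho$ is in hand every step is a routine bijection-and-cancellation argument.
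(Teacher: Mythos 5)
Your proof is correct, and it takes a genuinely different route from the paper's. Writing $m=(n-1)/2$, $U=\{0,\dots,m\}$, $V=\{m+1,\dots,2m\}$, the paper splits into the cases $n\equiv1\pmod{4}$ and $n\equiv3\pmod{4}$, notes that the corresponding middle term $a_{(n-1)/4}$ or $a_{(3n-1)/4}$ vanishes, and then asserts that all remaining nonzero terms of the triangular sum cancel in sign-reversing pairs via the relations $a_ia_j=-a_ia_{(n-1)/2-j}$, $a_ia_j=-a_ia_{(3n-1)/2-j}$ and the mirror of the latter. You avoid the case split entirely and replace term-by-term cancellation with factorization: the half-sums $\sum_{i\in U}a_i$ and $\sum_{j\in V}a_j$ vanish, which kills the $U\times U$ block at once (while $V\times V$ never meets the region $i+j\le n-1$), and the truncated cross block $B$ is handled by the sign-preserving involution $\rho(i,j)=(m-i,3m+1-j)$, which swaps $\{i+j\le 2m\}$ with its complement inside $U\times V$ and yields $2B=\bigl(\sum_{i\in U}a_i\bigr)\bigl(\sum_{j\in V}a_j\bigr)=0$. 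Your handling of the cross block is in fact more careful than the paper's: the paper's prescribed partner on that block flips only the $V$-coordinate, and that map does not preserve the constraint $i+j\le n-1$ (for $n=9$ it sends the in-range pair $(1,5)$ to the out-of-range $(1,8)$), so the terse ``pair and cancel'' step there needs precisely the kind of repair your complement-swap supplies. What the paper's sketch buys is brevity and a local cancellation picture; what yours buys is a uniform, fully verifiable argument with no parity cases and no reliance on a vanishing middle term, at the modest cost of dividing by $2$ (harmless here, since the $a_k$ are complex numbers).
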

\begin{proof}We consider two cases. If $n\equiv 1\pmod{4}$, then $a_{(n-1)/4}=0$, and for nonnegative indices $i,j\neq (n-1)/4$ and $i+j\leqslant n-1$, we have
\begin{equation}
a_ia_{j}
=\begin{cases}
-a_i a_{(n-1)/2-j} &\text{if $i,j\leqslant (n-1)/2$},\\[5pt]
-a_i a_{(3n-1)/2-j} &\text{if $i\leqslant (n-1)/2$ and $j\geqslant (n+1)/2$},\\[5pt]
-a_{(3n-1)/2-i} a_{j} &\text{if $i\geqslant (n+1)/2$ and $j\leqslant (n-1)/2$},\\[5pt]
\end{cases} \label{eq:cases}
\end{equation}
This means that all the nonzero terms on the left-hand side of \eqref{eq:zero} can be paired so that the two terms in each pair only differs  by a sign.
Therefore, the identity \eqref{eq:zero} holds.

If $n\equiv 3\pmod{4}$, then $a_{(3n-1)/4}=0$, and for nonnegative indices $i,j\neq (3n-1)/4$ and $i+j\leqslant n-1$, the equality
\eqref{eq:cases} still  holds, thus establishing \eqref{eq:zero}.
\end{proof}

We are now able to establish the following $q$-congruence with an additional parameter $a$. Note that this $q$-congruence
modulo $[n](1-aq^n)(a-q^n)$ was already given by the second author \cite{Li}.
\begin{theorem}\label{thm-a3}
Let $n$ be a positive odd integer, and for $k\geqslant 0$,
$$
c_q(k)=(-1)^kq^{k^2}[4k+1]\frac{(aq,q/a,q;q^2)_k}{(aq^2,q^2/a,q^2;q^2)_k}.
$$
Then
\begin{equation}
\sum_{k=0}^{n-1}\sum_{j=0}^{k}c_q(j)c_q(k-j)\equiv q^{(n-1)^2/2}[n]^2\pmod{[n]\Phi_n(q)(1-aq^n)(a-q^n)}. \label{eq:thm-a3}
\end{equation}
\end{theorem}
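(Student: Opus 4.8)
The plan is to prove \eqref{eq:thm-a3} modulo each of the pairwise coprime factors $(1-aq^n)$, $(a-q^n)$, $[n]/\Phi_n(q)$ and $\Phi_n(q)^2$ separately, and then glue the pieces together with the Chinese remainder theorem for coprime polynomials. This is the natural decomposition because $\Phi_n(q)$ divides $[n]$ exactly once, so the product of these four factors is precisely $[n]\Phi_n(q)(1-aq^n)(a-q^n)$. Since the second author \cite{Li} already established \eqref{eq:thm-a3} modulo $[n](1-aq^n)(a-q^n)$, the three factors $(1-aq^n)$, $(a-q^n)$ and $[n]/\Phi_n(q)$ are disposed of at once (each divides $[n](1-aq^n)(a-q^n)$, and the right-hand side $q^{(n-1)^2/2}[n]^2$ is the same). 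For completeness one re-derives these: for $a=q^{\pm n}$ the factor $(q^{1\mp n};q^2)_k$ forces $c_q(k)=0$ once $k\geqslant(n+1)/2$, so Lemma \ref{Mohamed-Lemma1-2019} collapses the double sum into $\bigl(\sum_{k=0}^{(n-1)/2}c_q(k)\bigr)^2$, which the single-sum evaluations \eqref{eq:false-1} and \eqref{eq:false-2} identify with $q^{(n-1)^2/2}[n]^2$; and modulo $[n]$ the roots-of-unity argument of Theorem \ref{thm:thm-a1} applies verbatim, as the half-range sum vanishes at every $d$-th root of unity with $d\mid n$ and $d>1$. Hence the only genuinely new ingredient is the congruence modulo $\Phi_n(q)^2$, to which everything below is devoted.

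For the modulus $\Phi_n(q)^2$ I first note that $q^{(n-1)^2/2}[n]^2\equiv 0\pmod{\Phi_n(q)^2}$, because $\Phi_n(q)\mid[n]$; so it suffices to show $\Phi_n(q)^2$ divides the double sum $S=\sum_{i+j\leqslant n-1}c_q(i)c_q(j)$. I would split the index triangle into the square block $0\leqslant i,j\leqslant(n-1)/2$ and the remaining cross block, in which exactly one of $i,j$ lies in $[(n+1)/2,\,n-1]$ (both cannot, since $i+j\leqslant n-1$). The square block equals $A^2$ with $A=\sum_{k=0}^{(n-1)/2}c_q(k)$, and the $b=1$ instance of \eqref{eq:false-1} gives $A\equiv(-q)^{(n-1)^2/4}[n]\equiv 0\pmod{\Phi_n(q)}$, whence $A^2\equiv 0\pmod{\Phi_n(q)^2}$ for free.

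The heart of the matter is the cross block. Here each surviving product contains one factor $c_q(k)$ with $(n+1)/2\leqslant k\leqslant n-1$; since $(q;q^2)_k$ then carries the factor $1-q^n$, that $c_q(k)$ is divisible by $\Phi_n(q)$, so the cross block is automatically divisible by $\Phi_n(q)$ and I need only extract one more power. This is exactly what the pairing underlying Lemma \ref{lemma:zero} achieves. I would verify that $c_q(k)$ obeys the antisymmetry relations of that lemma to the relevant order, namely the lower reflection $c_q(k)\equiv-c_q\bigl(\tfrac{n-1}{2}-k\bigr)\pmod{\Phi_n(q)}$ for $0\leqslant k\leqslant(n-1)/2$ and the upper reflection $c_q(k)\equiv-c_q\bigl(\tfrac{3n-1}{2}-k\bigr)\pmod{\Phi_n(q)^2}$ for $(n+1)/2\leqslant k\leqslant n-1$. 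Granting these, pairing $k$ with $\tfrac{3n-1}{2}-k$ via the upper reflection rewrites the cross block as $\Phi_n(q)$ times a sum of differences of the partial sums $\sum_{i=0}^{m}c_q(i)$ at complementary indices $m$ and $\tfrac{n-3}{2}-m$; the lower reflection together with $A\equiv 0\pmod{\Phi_n(q)}$ forces those two partial sums to agree modulo $\Phi_n(q)$, so every difference vanishes and the cross block is $\equiv 0\pmod{\Phi_n(q)^2}$. Combining with the square block gives $S\equiv 0\pmod{\Phi_n(q)^2}$.

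The main obstacle is establishing the two reflection congruences, and especially the upper one to second order: both of its sides are already divisible by $\Phi_n(q)$, so it is really a statement about the quotients $c_q(k)/\Phi_n(q)$ modulo $\Phi_n(q)$, which demands a careful termwise manipulation of the $q$-shifted factorials (using $q^n\equiv 1\pmod{\Phi_n(q)}$ and the Watson-type evaluations \eqref{eq:false-1} and \eqref{eq:false-2}) rather than the soft roots-of-unity bookkeeping that suffices modulo $\Phi_n(q)$. Once the congruence modulo $\Phi_n(q)^2$ is secured, I would finish by observing that the least common multiple of Li's modulus $[n](1-aq^n)(a-q^n)$ and $\Phi_n(q)^2$ is exactly $[n]\Phi_n(q)(1-aq^n)(a-q^n)$, so the two congruences—having the common right-hand side $q^{(n-1)^2/2}[n]^2$—combine to yield \eqref{eq:thm-a3}.
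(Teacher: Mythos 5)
Your reduction is sound as far as it goes: the factorization of the modulus into the pairwise coprime pieces $(1-aq^n)$, $(a-q^n)$, $[n]/\Phi_n(q)$ and $\Phi_n(q)^2$ is legitimate, Li's result \cite{Li} does dispose of the first three, the square-block/cross-block split of the index triangle is correct, and granting your two reflection congruences the bookkeeping does yield the cross block $\equiv 0\pmod{\Phi_n(q)^2}$ (including the fixed point $k=(3n-1)/4$ when $n\equiv 3\pmod 4$, where $[4k+1]=[3n]$ supplies the second factor of $\Phi_n(q)$). But the proposal has a genuine gap exactly where you yourself locate "the heart of the matter": the upper reflection $c_q(k)\equiv -c_q\bigl(\tfrac{3n-1}{2}-k\bigr)\pmod{\Phi_n(q)^2}$ for $(n+1)/2\leqslant k\leqslant n-1$ is asserted, not proved. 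This is not a routine verification deferred for space: since for such $k$ the three factors $1-q^n$, $1-aq^n$, $1-q^n/a$ all occur in $(q,aq,q/a;q^2)_k$ and only the first contributes $\Phi_n(q)$, your claim is equivalent to a \emph{new} first-order reflection, modulo $\Phi_n(q)$, for the regular parts $c_q(k)/\bigl((1-q^n)(1-aq^n)(1-q^n/a)\bigr)$ --- that is, a second-order analogue of \eqref{eq:gs}, which nothing in the paper or in your outline establishes. The tools you invoke for it, \eqref{eq:false-1} and \eqref{eq:false-2}, cannot do this job: they are congruences for the \emph{full sums} (and \eqref{eq:false-2} concerns the modulus $b-q^n$, irrelevant here), whereas you need a \emph{termwise} statement. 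The claim does appear to be true --- it checks out by direct computation at $n=3$ and $n=5$ with $a$ generic --- so your route is plausibly completable, but as written the entire strengthening beyond Li's modulus rests on an unproven lemma.

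It is worth seeing how the paper sidesteps precisely this difficulty, because the contrast explains why no second-order termwise congruence is ever needed there. The paper embeds $c_q(k)$ in the two-parameter family $z_q(k)=(-1)^k[4k+1]\frac{(aq,q/a,q/b;q^2)_k}{(aq^2,q^2/a,bq^2;q^2)_k}b^kq^{k^2}$, proves the double-sum congruence modulo $[n]$ and, via \eqref{eq:false-1} and \eqref{eq:false-2} together with Lemma \ref{Mohamed-Lemma1-2019}, modulo $(1-aq^n)(a-q^n)$ and modulo $b-q^n$ separately; the Chinese remainder theorem in the form \eqref{eq:ab-1}--\eqref{eq:ab-2} then assembles a congruence \eqref{eq:abqn} modulo $[n](1-aq^n)(a-q^n)(b-q^n)$, and the specialization $b=1$ turns the CRT coefficient $b-q^n$ into $1-q^n$, which is where the extra factor $\Phi_n(q)$ materializes for free; the identity \eqref{eq:relation} then reduces the right-hand side to $q^{(n-1)^2/2}[n]^2$. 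In other words, the paper only ever uses the soft first-order reflections \eqref{eq:zqk-1}--\eqref{eq:zqk-2} (through Lemma \ref{lemma:zero}, and only for the modulus $[n]$ part), and manufactures the second power of $\Phi_n(q)$ from the auxiliary parameter $b$ rather than from a refined antisymmetry. If you want to salvage your approach, the concrete missing step is to state and prove the mod-$\Phi_n(q)$ reflection for the regular parts described above (an \eqref{eq:gs}-type computation at a primitive root of unity, with the factors indexed by $q^n$ removed); alternatively, adopt the $b$-parameter CRT device, which avoids the issue entirely.
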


\begin{proof}
Suppose that $n>1$ is odd. Let
$$
z_q(k)=(-1)^k [4k+1]\frac{(aq,q/a,q/b;q^2)_k}{(aq^2,q^2/a,bq^2;q^2)_k}b^k q^{k^2} .
$$
By \cite[Lemma 3.1]{GS}, for $0\leqslant k\leqslant (n-1)/2$, we have
\begin{equation}
\frac{(aq;q^2)_{(n-1)/2-k}}{(q^2/a;q^2)_{(n-1)/2-k}}
\equiv (-a)^{(n-1)/2-2k}\frac{(aq;q^2)_k}{(q^2/a;q^2)_k} q^{(n-1)^2/4+k}
\pmod{\Phi_n(q)}.  \label{eq:gs}
\end{equation}
From the above $q$-congruence, we can easily check that, for $0\leqslant k\leqslant (n-1)/2$,
\begin{equation}
z_q(k)\equiv -z_q((n-1)/2-k) \pmod{\Phi_n(q)}.  \label{eq:zqk-1}
\end{equation}
Similarly, for $(n+1)/2\leqslant k\leqslant n-1$,
\begin{equation}
z_q(k)\equiv -z_q((3n-1)/2-k) \pmod{\Phi_n(q)}.  \label{eq:zqk-2}
\end{equation}
By Lemma \ref{lemma:zero}, we get
\begin{equation}
\sum_{k=0}^{n-1}\sum_{j=0}^{k}z_q(j)z_q(k-j)\equiv 0\pmod{\Phi_n(q)}. \label{eq:thm-a300}
\end{equation}

Let $\zeta$ be a primitive $d$-th root of unity with $d\mid n$ and $d>1$. Then the $n=d$ case of \eqref{eq:thm-a300}
implies that
$$
\sum_{k=0}^{d-1}\sum_{j=0}^{k}z_\zeta(j)z_\zeta(k-j)=0,
$$
Moreover, the equality $z_\zeta(ld+k)/z_\zeta(ld)=c_\zeta(k)$ holds for $0\leqslant k\leqslant d-1$.
By Lemma \ref{Mohamed-Lemma1-2019}, similarly to the proof of \eqref{eq2.5}, we can show that
the $q$-congruence \eqref{eq:thm-a300} is true modulo $[n]$.

For $a=q^n$ or $a=q^{-n}$, by \eqref{eq:false-1}, we have
$$
\sum_{k=0}^{n-1}z_q(k)=(-q)^{(n-1)^2/4}[n]\sum_{k=0}^{(n-1)/2}\frac{(1-b)(q^{1+n},q^{1-n};q^2)_k}{(1-bq^{2k})(q,q^2;q^2)_k}q^k,
$$
and $z_q(k)=0$ for $(n+1)/2\leqslant k\leqslant n-1$. By Lemma \ref{Mohamed-Lemma1-2019}, we get
\begin{equation*}
\sum_{k=0}^{n-1}\sum_{j=0}^{k}z_q(j)z_q(k-j)
=q^{(n-1)^2/2}[n]^2\left(\sum_{k=0}^{(n-1)/2}\frac{(1-b)(aq^{1+n},q^{1-n};q^2)_k}{(1-bq^{2k})(q,q^2;q^2)_k}q^k \right)^2.
\end{equation*}
Namely, for the indeterminates $a$ and $b$, we obtain the following $q$-congruence: modulo $(1-aq^n)(a-q^n)$,
\begin{equation}
\sum_{k=0}^{n-1}\sum_{j=0}^{k}z_q(j)z_q(k-j)
\equiv q^{(n-1)^2/2}[n]^2\left(\sum_{k=0}^{(n-1)/2}\frac{(1-b)(aq,q/a;q^2)_k}{(1-bq^{2k})(q,q^2;q^2)_k}q^k \right)^2.
\label{eq:chinese-a}
\end{equation}
Similarly, from \eqref{eq:false-2} we deduce the following $q$-congruence: modulo $b-q^n$,
\begin{equation}
\sum_{k=0}^{n-1}\sum_{j=0}^{k}z_q(j)z_q(k-j)
\equiv [n]^2\left(\sum_{k=0}^{(n-1)/2}\frac{(q,q/b;q^2)_k b^k}{(aq^2,q^2/a;q^2)_k}\right)^2.
\label{eq:chinese-b}
\end{equation}

It is clear that $(1-aq^n)(a-q^n)$, and $b-q^n$ are relatively prime the polynomials.
In light of the Chinese reminder theorem for polynomials, we can determine the remainder of the left-hand side of \eqref{eq:thm-a300} modulo $[n](1-aq^n)(a-q^n)(b-q^n)$
from \eqref{eq:chinese-a} and \eqref{eq:chinese-b}.
In fact, using following two $q$-congruences:
\begin{align}
\frac{(b-q^n)(ab-1-a^2+aq^n)}{(a-b)(1-ab)}&\equiv 1\pmod{(1-aq^n)(a-q^n)}, \label{eq:ab-1} \\[5pt]
\frac{(1-aq^n)(a-q^n)}{(a-b)(1-ab)}&\equiv 1\pmod{b-q^n}.  \label{eq:ab-2}
\end{align}
we conclude that
\begin{align}
&\sum_{k=0}^{n-1}\sum_{j=0}^{k}z_q(j)z_q(k-j)\notag\\[5pt]
&\quad\equiv\frac{(b-q^n)(ab-1-a^2+aq^n)}{(a-b)(1-ab)}q^{(n-1)^2/2}[n]^2\left(\sum_{k=0}^{(n-1)/2}\frac{(1-b)(aq,q/a;q^2)_k}{(1-bq^{2k})(q,q^2;q^2)_k}q^k \right)^2 \notag\\[5pt]
&\quad\quad+\frac{(1-aq^n)(a-q^n)}{(a-b)(1-ab)}[n]^2\left(\sum_{k=0}^{(n-1)/2}\frac{(q,q/b;q^2)_k b^k}{(aq^2,q^2/a;q^2)_k}\right)^2 \label{eq:abqn}
\end{align}
modulo $[n](1-aq^n)(a-q^n)(b-q^n)$.

We now take $b=1$ in \eqref{eq:abqn}. In this case, the polynomial $b-q^n=1-q^n$ contains the factor $\Phi_n(q)$.
Meanwhile, the second part on the right-hand of \eqref{eq:abqn} modulo $[n]\Phi_n(q)(1-aq^n)(a-q^n)$ vanishes.
Therefore, the $q$-congruence \eqref{eq:abqn} reduces to the following one: modulo $[n]\Phi_n(q)(1-aq^n)(a-q^n)$,
\begin{align*}
\sum_{k=0}^{n-1}\sum_{j=0}^{k}c_q(j)c_q(k-j)
\equiv\frac{(1-q^n)(1+a^2-a-aq^n)}{(1-a)^2}q^{(n-1)^2/2}[n]^2,
\end{align*}
which is clearly equivalent to \eqref{eq:thm-a3}, since
\begin{equation}
(1-q^n)(1+a^2-a-aq^n)=(1-a)^2+(1-aq^n)(a-q^n).  \label{eq:relation}
\end{equation}
This completes the proof.
\end{proof}

\begin{proof}[Proof of Theorem {\rm\ref{thm3}}]
Letting $a=1$ in \eqref{eq:thm-a3}, we are led to \eqref{eq:thm3}.
\end{proof}

\section{Proof of Theorem \ref{thm4}}
The proof is analogous to that of Theorem \ref{thm3}. This time we need the following two auxiallary results, which can be derived form Jackson's $_6\phi_5$
summation formula \cite[ Appendix (II.21)]{GR}. For the detailed proofs of them, see \cite[Theorem 4.2]{GuoZu} and \cite[Lemma 2.3]{Guo-mod4}.

\begin{lemma}
\label{th:4.2}
Let $n$ be a positive odd integer. Then, modulo $[n](1-aq^n)(a-q^n)$,
\begin{equation}
\sum_{k=0}^{(n-1)/2}[4k+1]\frac{(aq,q/a,q/b,q;q^2)_k}
{(aq^2,q^2/a,bq^2,q^2;q^2)_k}b^k
\equiv\frac{(b/q)^{(n-1)/2} (q^2/b;q^2)_{(n-1)/2}}{(bq^2;q^2)_{(n-1)/2}}[n].
\label{eq:q-Long-gen}
\end{equation}
\end{lemma}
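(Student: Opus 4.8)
The plan is to read the left-hand side of \eqref{eq:q-Long-gen}, whose $k$-th summand I denote $T_k$, as a truncation of a very-well-poised ${}_6\phi_5$ series in base $q^2$ with leading parameter $q$. Writing $[4k+1]=(1-q^{4k+1})/(1-q)$, the product $[4k+1](q;q^2)_k$ is exactly the very-well-poised part $\frac{1-q\,q^{4k}}{1-q}(q;q^2)_k$, while the three pairs $(aq;q^2)_k/(aq^2;q^2)_k$, $(q/a;q^2)_k/(q^2/a;q^2)_k$ and $(q/b;q^2)_k/(bq^2;q^2)_k$, together with the argument $b^k$, exhibit $T_k$ as the very-well-poised ${}_6\phi_5$ with parameters $aq$, $q/a$, $q/b$ and argument $q^3/((aq)(q/a)(q/b))=b$. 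Following the creative microscoping method, I would prove \eqref{eq:q-Long-gen} separately modulo the three pairwise coprime factors $1-aq^n$, $a-q^n$ and $[n]$, and then assemble the pieces by the Chinese remainder theorem for coprime polynomials.

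For the factors $1-aq^n$ and $a-q^n$ I would specialize $a=q^{-n}$ and $a=q^n$. When $a=q^n$ the parameter $q/a=q^{1-n}=(q^2)^{-(n-1)/2}$ is a negative power of the base, so the series terminates precisely at $k=(n-1)/2$ and Jackson's ${}_6\phi_5$ summation \cite[Appendix (II.21)]{GR} evaluates it in closed form as $\frac{(q^3,bq^{1-n};q^2)_{(n-1)/2}}{(q^{2-n},bq^2;q^2)_{(n-1)/2}}$. Since $2\cdot\frac{n-1}{2}+1=n$, the product $(q^3;q^2)_{(n-1)/2}$ ends in the factor $1-q^n$, so a routine rearrangement of $q$-shifted factorials recovers the factor $[n]$ and shows this closed form equals the right-hand side of \eqref{eq:q-Long-gen} at $a=q^n$. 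The case $a=q^{-n}$, in which instead $aq=q^{1-n}$ terminates the sum, is symmetric. This establishes \eqref{eq:q-Long-gen} modulo $(1-aq^n)(a-q^n)$.

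The substantive part is the factor $[n]$. Because the right-hand side of \eqref{eq:q-Long-gen} carries the explicit factor $[n]=\prod_{d\mid n,\,d>1}\Phi_d(q)$, it suffices to show the left-hand side vanishes modulo each $\Phi_d(q)$ with $d\mid n$ and $d>1$. Fixing such a $d$ and letting $\zeta$ be a primitive $d$-th root of unity, the numerator factor $(q;q^2)_k$ makes $T_k$ vanish modulo $\Phi_d(q)$ whenever $(d+1)/2\leqslant k\bmod d\leqslant d-1$, i.e. on the upper half of every length-$d$ block, exactly as in the vanishing used for Lemma \ref{Mohamed-Lemma1-2019}. Writing $n=de$ with $e$ odd gives $\frac{n-1}{2}=\frac{e-1}{2}d+\frac{d-1}{2}$, so the truncated sum splits into full upper halves (which vanish) together with $\frac{e+1}{2}$ lower-half blocks $\sum_{r=0}^{(d-1)/2}T_{ld+r}$. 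On each block the periodicity $T_{ld+r}\equiv T_{ld}\,T_r$ (verified as in the proof of Theorem \ref{thm:thm-a1}), combined with the $d$-level analogues of the reflection \eqref{eq:gs} applied to the four Pochhammer pairs (those in $a$, $1/a$, $1/b$ and $1$), should yield the antisymmetry $T_{ld+r}\equiv -T_{ld+(d-1)/2-r}\pmod{\Phi_d(q)}$; hence each lower-half block-sum cancels in pairs, exactly as in Lemma \ref{lemma:zero}. Thus the left-hand side is $\equiv 0\pmod{[n]}$, and the Chinese remainder theorem completes the proof.

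I expect the main obstacle to be the $[n]$ part, and within it two points. First, for $k\geqslant d$ the denominator $(q^2;q^2)_k$ acquires zeros at $\zeta$, so before any antisymmetry can be invoked one must verify that the numerator zeros from $(q;q^2)_k$ exactly compensate, leaving each $T_k$ regular at $\zeta$ and the periodicity relation meaningful. Second, pinning down the sign in $T_{ld+r}\equiv -T_{ld+(d-1)/2-r}$ requires careful bookkeeping: the very-well-poised factor $[4k+1]$ contributes $-q^{-4k-1}$ under reflection, and one must check that the powers of $q$, $a$ and $b$ produced by reflecting the four Pochhammer pairs cancel against it to leave precisely $-1$. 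These are exactly the explicit computations carried out in \cite[Theorem 4.2]{GuoZu} and \cite[Lemma 2.3]{Guo-mod4}.
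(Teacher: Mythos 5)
Your proposal is correct and takes essentially the same route as the paper, which itself gives no proof here but defers to Jackson's ${}_6\phi_5$ summation and to \cite[Theorem 4.2]{GuoZu} and \cite[Lemma 2.3]{Guo-mod4}: namely, the terminating specializations $a=q^{\pm n}$ (where $q/a$ or $aq$ becomes $(q^2)^{-(n-1)/2}$) evaluated by Jackson's formula to get the congruence modulo $(1-aq^n)(a-q^n)$, the root-of-unity block decomposition with the $d$-level reflection of type \eqref{eq:gs} to get vanishing modulo each $\Phi_d(q)$ with $d\mid n$, $d>1$, and coprimality of the three factors to assemble the modulus $[n](1-aq^n)(a-q^n)$. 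Your detailed checks also hold up: the ${}_6\phi_5$ parameter identification with argument $b$, the Jackson evaluation $\frac{(q^3,bq^{1-n};q^2)_{(n-1)/2}}{(q^{2-n},bq^2;q^2)_{(n-1)/2}}$ simplifying to the stated right-hand side, and the sign bookkeeping $[4k+1]\mapsto -q^{-4k-1}[4k+1]$ which, combined with the four reflected Pochhammer pairs and $q^{d(d-2)}\equiv 1\pmod{\Phi_d(q)}$, yields exactly the antisymmetry $T_{ld+r}\equiv -T_{ld+(d-1)/2-r}$.
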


\begin{lemma}\label{lem:2}
Let $n$ be a positive odd integer. Then, modulo $b-q^n$,
\begin{equation}
\sum_{k=0}^{(n-1)/2}[4k+1]\frac{(aq,q/a,q/b,q;q^2)_k}
{(aq^2,q^2/a,bq^2,q^2;q^2)_k}b^k
\equiv\frac{(q;q^2)_{(n-1)/2}^2 [n]}{(aq^2,q^2/a;q^2)_{(n-1)/2}}.
\label{eq:q-Long-gen-2}
\end{equation}
\end{lemma}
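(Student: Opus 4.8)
The plan is to prove \eqref{eq:q-Long-gen-2} by specialising $b=q^n$. Since we work modulo the linear factor $b-q^n$ and each summand is a rational function of $b$ with no pole at $b=q^n$ (the denominator $(bq^2;q^2)_k$ specialises to the nonzero polynomial $(q^{n+2};q^2)_k$, none of whose factors $1-q^{n+2+2i}$ can vanish), the asserted congruence is equivalent to a genuine closed-form evaluation of the left-hand side at $b=q^n$. First I would record that after this substitution the factor $(q/b;q^2)_k$ becomes $(q^{1-n};q^2)_k$, whose factor $1-q^{1-n+2j}$ with $j=(n-1)/2$ equals $1-q^0=0$; hence the summand vanishes for $k\geqslant(n+1)/2$ and the sum is genuinely terminating at $k=(n-1)/2$.

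Next I would recognise the specialised sum as a terminating very-well-poised ${}_6\phi_5$ with base $q^2$. Taking the very-well-poised parameter to be $A=q$, the product $[4k+1]\,(q;q^2)_k/(q^2;q^2)_k$ is precisely the standard factor $\frac{1-Aq^{4k}}{1-A}\cdot\frac{(A;q^2)_k}{(q^2;q^2)_k}$, while the three free parameters are $B=aq$, $C=q/a$ and $D=q^{1-n}=q^{-2N}$ with $N=(n-1)/2$. One checks the pairings $Aq^2/B=q^2/a$, $Aq^2/C=aq^2$, $Aq^2/D=q^{n+2}$, and that the argument $Aq^{2N+2}/(BC)=q^n$ matches the power $b^k=q^{nk}$ appearing in the summand. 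Jackson's ${}_6\phi_5$ summation formula \cite[Appendix (II.21)]{GR}, read in base $q^2$, then collapses the sum to $(Aq^2,Aq^2/BC;q^2)_N\big/(Aq^2/B,Aq^2/C;q^2)_N=(q^3,q;q^2)_{(n-1)/2}\big/(q^2/a,aq^2;q^2)_{(n-1)/2}$.

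Finally I would reconcile this with the right-hand side of \eqref{eq:q-Long-gen-2}. The only nonroutine point is the numerator: for $N=(n-1)/2$ we have $2N+1=n$, so $(q^3;q^2)_N/(q;q^2)_N=(1-q^{2N+1})/(1-q)=(1-q^n)/(1-q)=[n]$, giving $(q^3,q;q^2)_N=[n]\,(q;q^2)_N^2$. Dividing by $(aq^2,q^2/a;q^2)_{(n-1)/2}$ produces exactly $\frac{(q;q^2)_{(n-1)/2}^2[n]}{(aq^2,q^2/a;q^2)_{(n-1)/2}}$, as claimed.

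The main obstacle is not conceptual but a matter of careful bookkeeping: one must correctly align the dense cluster of parameters with the very-well-poised template so that all numerator and denominator entries, the base $q^2$, and the summation argument simultaneously match Jackson's formula, and then spot the small telescoping identity $(q^3;q^2)_{(n-1)/2}=[n]\,(q;q^2)_{(n-1)/2}$ that converts the raw output of the summation into the compact form stated. Confirming that setting $b=q^n$ introduces no zero denominators is a minor but necessary preliminary step.
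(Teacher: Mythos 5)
Your proof is correct and takes essentially the same route as the paper's: the paper defers the detailed proof to \cite[Lemma~2.3]{Guo-mod4}, where the argument is precisely your specialisation $b=q^n$ (legitimate since no factor of $(bq^2;q^2)_k$ vanishes there) followed by the terminating very-well-poised ${}_6\phi_5$ summation \cite[Appendix~(II.21)]{GR} in base $q^2$ with $A=q$, $B=aq$, $C=q/a$, $D=q^{1-n}$. Your parameter bookkeeping, the evaluation $(q^3,q;q^2)_{(n-1)/2}\big/(q^2/a,aq^2;q^2)_{(n-1)/2}$, and the simplification $(q^3;q^2)_{(n-1)/2}=[n]\,(q;q^2)_{(n-1)/2}$ are all accurate.
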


Likewise, we first give the following parametric generalization of Theorem \ref{thm4}.
\begin{theorem}\label{thm-a4}
Let $n$ be a positive odd integer, and for $k\geqslant 0$,
$$
c_q(k)=[4k+1]\frac{(aq,q/a;q^2)_k(q;q^2)_k^2}{(aq^2,q^2/a;q^2)_k(q^2;q^2)_k^2}.
$$
Then
\begin{equation}
\sum_{k=0}^{n-1}\sum_{j=0}^{k}c_q(j)c_q(k-j)\equiv q^{1-n}[n]^2\pmod{[n]\Phi_n(q)(1-aq^n)(a-q^n)}. \label{eq:thm-a4}
\end{equation}
\end{theorem}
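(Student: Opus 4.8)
The plan is to transcribe the proof of Theorem \ref{thm-a3} almost verbatim, carrying along an auxiliary parameter $b$ and specializing to $b=1$ at the very end. First I would introduce
$$
z_q(k)=[4k+1]\frac{(aq,q/a,q/b,q;q^2)_k}{(aq^2,q^2/a,bq^2,q^2;q^2)_k}b^k,
$$
which satisfies $z_q(k)|_{b=1}=c_q(k)$ and is exactly the summand occurring in Lemmas \ref{th:4.2} and \ref{lem:2}. The argument then splits into establishing the double-sum congruence for $z_q$ separately modulo $[n]$, modulo $(1-aq^n)(a-q^n)$, and modulo $b-q^n$, gluing the three via the Chinese remainder theorem for coprime polynomials, and finally setting $b=1$.

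For the modulus $[n]$, I would first prove the anti-symmetry $z_q(k)\equiv -z_q((n-1)/2-k)\pmod{\Phi_n(q)}$ for $0\leqslant k\leqslant(n-1)/2$, together with $z_q(k)\equiv -z_q((3n-1)/2-k)\pmod{\Phi_n(q)}$ for $(n+1)/2\leqslant k\leqslant n-1$. These follow from four applications of \eqref{eq:gs} — to the reciprocal pairs governed by $a$, $1/a$, $1/b$, and $1$ — combined with the elementary relation $[4((n-1)/2-k)+1]\equiv -q^{-4k-1}[4k+1]\pmod{\Phi_n(q)}$. Here the four sign factors multiply to $(-a)(-1/a)(-1/b)(-1)=1/b$ and the four powers $q^{(n-1)^2/4+k}$ multiply to $q^{(n-1)^2+4k}\equiv q^{1+4k}$, so that all powers of $q$ and $b$ cancel against the $b^{k}$ and $q$-integer factors, leaving precisely the minus sign. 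Lemma \ref{lemma:zero} then forces $\sum_{k=0}^{n-1}\sum_{j=0}^{k}z_q(j)z_q(k-j)\equiv 0\pmod{\Phi_n(q)}$ for every odd $n$. Taking $\zeta$ a primitive $d$-th root of unity with $d\mid n$ and $d>1$, the $n=d$ instance supplies the vanishing base case at $\zeta$, while the quasi-periodicity $z_\zeta(ld+k)/z_\zeta(ld)=z_\zeta(k)$ (valid because $\zeta^{2ld}=1$) lets Lemma \ref{Mohamed-Lemma1-2019} promote this to $\equiv 0\pmod{[n]}$, which agrees with $q^{1-n}[n]^2\equiv 0\pmod{[n]}$.

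For the remaining moduli I would use the two summation lemmas. When $a=q^n$ or $a=q^{-n}$ the factor $(q/a;q^2)_k$ (resp.\ $(aq;q^2)_k$) vanishes for $(n+1)/2\leqslant k\leqslant n-1$, so Lemma \ref{Mohamed-Lemma1-2019} turns the double sum into the square of $\sum_{k=0}^{(n-1)/2}z_q(k)$, which Lemma \ref{th:4.2} evaluates; this yields the analogue of \eqref{eq:chinese-a} modulo $(1-aq^n)(a-q^n)$. Similarly, when $b=q^n$ the factor $(q/b;q^2)_k$ kills the top terms and Lemma \ref{lem:2} gives the analogue of \eqref{eq:chinese-b} modulo $b-q^n$. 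Gluing these two with \eqref{eq:ab-1} and \eqref{eq:ab-2} produces, modulo $[n](1-aq^n)(a-q^n)(b-q^n)$, an explicit two-term combination exactly parallel to \eqref{eq:abqn}, in which each term carries a factor $[n]^2$.

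Finally I would put $b=1$. Then $b-q^n=1-q^n=(1-q)[n]$ acquires the factor $\Phi_n(q)$, so the modulus sharpens to $[n]\Phi_n(q)(1-aq^n)(a-q^n)$; the second term of the combination is a multiple of $(1-aq^n)(a-q^n)[n]^2$, hence divisible by $[n]\Phi_n(q)(1-aq^n)(a-q^n)$ because $\Phi_n(q)\mid[n]$, and therefore drops out. Using $\big(\frac{(b/q)^{(n-1)/2}(q^2/b;q^2)_{(n-1)/2}}{(bq^2;q^2)_{(n-1)/2}}\big)\big|_{b=1}=q^{-(n-1)/2}$, the surviving first term becomes $\frac{(1-q^n)(1+a^2-a-aq^n)}{(1-a)^2}\,q^{1-n}[n]^2$, and \eqref{eq:relation} collapses the coefficient to $1$ modulo $(1-aq^n)(a-q^n)$, delivering \eqref{eq:thm-a4}. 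I expect the principal obstacle to be the sign-and-exponent bookkeeping in the fourfold use of \eqref{eq:gs} for the anti-symmetry, and confirming that the two specialized summation values are mutually consistent enough to force the clean collapse at $b=1$; the rest is a faithful copy of the proof of Theorem \ref{thm-a3}.
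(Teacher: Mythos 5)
Your proposal is correct and takes essentially the same route as the paper's proof of Theorem \ref{thm-a4}: the same auxiliary sequence $z_q(k)$, the antisymmetry from \eqref{eq:gs} combined with Lemma \ref{lemma:zero} (and the root-of-unity/quasi-periodicity argument) for the modulus $[n]$, Lemmas \ref{th:4.2} and \ref{lem:2} for the moduli $(1-aq^n)(a-q^n)$ and $b-q^n$, the Chinese remainder gluing via \eqref{eq:ab-1} and \eqref{eq:ab-2} exactly as in \eqref{eq:abqn-2nd}, and the specialization $b=1$ with \eqref{eq:relation}. Your explicit sign-and-exponent bookkeeping in the fourfold use of \eqref{eq:gs}, including $[4((n-1)/2-k)+1]\equiv -q^{-4k-1}[4k+1]\pmod{\Phi_n(q)}$ and the cancellation of the $b$-powers, checks out and indeed supplies details the paper only sketches.
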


\begin{proof}
Suppose that $n>1$ is odd. Let
$$
z_q(k)=[4k+1]\frac{(aq,q/a,q/b,q;q^2)_k}
{(aq^2,q^2/a,bq^2,q^2;q^2)_k}b^k.
$$
By \eqref{eq:gs}, we have
$
z_q(k)\equiv -z_q((n-1)/2-k) \pmod{\Phi_n(q)}
$
for $0\leqslant k\leqslant (n-1)/2$.
Similarly, we also have
$
z_q(k)\equiv -z_q((3n-1)/2-k) \pmod{\Phi_n(q)}
$
for $(n+1)/2\leqslant k\leqslant n-1$.
In view of Lemma \ref{lemma:zero}, we get
\begin{equation}
\sum_{k=0}^{n-1}\sum_{j=0}^{k}z_q(j)z_q(k-j)\equiv 0\pmod{\Phi_n(q)}. \label{eq:thm-a400}
\end{equation}
Like before, we can further show that above $q$-congruence is true modulo $[n]$.

For $a=q^n$ or $a=q^{-n}$, by \eqref{eq:q-Long-gen}, we have
$$
\sum_{k=0}^{n-1}z_q(k)=\frac{(b/q)^{(n-1)/2} (q^2/b;q^2)_{(n-1)/2}}{(bq^2;q^2)_{(n-1)/2}}[n],
$$
and $z_q(k)=0$ for $(n+1)/2\leqslant k\leqslant n-1$. By Lemma \ref{Mohamed-Lemma1-2019}, we get
\begin{equation*}
\sum_{k=0}^{n-1}\sum_{j=0}^{k}z_q(j)z_q(k-j)
=\frac{(b/q)^{n-1} (q^2/b;q^2)_{(n-1)/2}^2}{(bq^2;q^2)_{(n-1)/2}^2}[n]^2.
\end{equation*}
Namely, for the indeterminates $a$ and $b$, we obtain
\begin{equation}
\sum_{k=0}^{n-1}\sum_{j=0}^{k}z_q(j)z_q(k-j)
\equiv \frac{(b/q)^{n-1} (q^2/b;q^2)_{(n-1)/2}^2}{(bq^2;q^2)_{(n-1)/2}^2}[n]^2 \pmod{(1-aq^n)(a-q^n)}.
\label{eq:chinese-a2}
\end{equation}
Similarly, applying \eqref{eq:q-Long-gen-2} we get the $q$-congruence: modulo $b-q^n$,
\begin{equation}
\sum_{k=0}^{n-1}\sum_{j=0}^{k}z_q(j)z_q(k-j)
\equiv \frac{(q;q^2)_{(n-1)/2}^4 [n]^2}{(aq^2,q^2/a;q^2)_{(n-1)/2}^2}.
\label{eq:chinese-b2}
\end{equation}

It follows from \eqref{eq:ab-1}, \eqref{eq:ab-2}, \eqref{eq:chinese-a2} and \eqref{eq:chinese-b2} that,  modulo $[n](1-aq^n)(a-q^n)(b-q^n)$,
\begin{align}
&\sum_{k=0}^{n-1}\sum_{j=0}^{k}z_q(j)z_q(k-j)\notag\\[5pt]
&\quad\equiv\frac{(b-q^n)(ab-1-a^2+aq^n)}{(a-b)(1-ab)}\frac{(b/q)^{n-1} (q^2/b;q^2)_{(n-1)/2}^2}{(bq^2;q^2)_{(n-1)/2}^2}[n]^2  \notag\\[5pt]
&\quad\quad+\frac{(1-aq^n)(a-q^n)}{(a-b)(1-ab)}\frac{(q;q^2)_{(n-1)/2}^4 [n]^2}{(aq^2,q^2/a;q^2)_{(n-1)/2}^2}. \label{eq:abqn-2nd}
\end{align}
Putting $b=1$ in \eqref{eq:abqn-2nd} and applying \eqref{eq:relation}, we arrive at \eqref{eq:thm-a4}.
\end{proof}

\begin{proof}[Proof of Theorem {\rm\ref{thm4}}]
Letting $a=1$ in \eqref{eq:thm-a4}, we immediately get \eqref{eq:thm3}.
\end{proof}

\section{More such $q$-supercongruences}
The first author and Zeng \cite{GZ} gave the following $q$-supercongruecnes: for any odd prime $p$,
$$
\sum_{k=0}^{p-1}\frac{2(q;q^2)_k^2 q^{2k}}{(q^2;q^2)_k^2(1+q^{2k})}
\equiv (-1)^{(p-1)/2} \pmod{[p]^2},
$$
which is a $q$-analogue of a classical supercongruence conjectured by Rodriguez-Villegas \cite[(36)]{RV} and first confirmed by
Mortenson \cite{Mortenson}.  Here we give the corresponding $q$-supercogruence on double sums.

\begin{theorem}\label{thm5}
Let $n$ be a positive odd integer, and for $k\geqslant 0$,
$$
c_q(k)=\frac{2(q;q^2)_k^2 q^{2k}}{(q^2;q^2)_k^2(1+q^{2k})}.
$$
Then
\begin{equation}
\sum_{k=0}^{n-1}\sum_{j=0}^{k}c_q(j)c_q(k-j)\equiv 1\pmod{\Phi_n(q)^2}. \label{eq:thm5-1}
\end{equation}
\end{theorem}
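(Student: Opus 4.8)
The plan is to reduce the double sum modulo $\Phi_n(q)^2$ to the square of the corresponding single sum, and then to establish a single-sum $q$-supercongruence of Rodriguez-Villegas type that refines the prime case of Guo and Zeng to an arbitrary odd modulus. First I would record an integrality/vanishing fact: for $0\le k\le n-1$ the denominator $(q^2;q^2)_k^2(1+q^{2k})$ is coprime to $\Phi_n(q)$ (for odd $n$, with $1\le i\le n-1$ neither $1-q^{2i}$ nor $1+q^{2k}$ is divisible by $\Phi_n(q)$), so each $c_q(k)$ is $\Phi_n(q)$-integral in this range. Moreover $(q;q^2)_k$ acquires the factor $1-q^n$ exactly from $k=(n+1)/2$ on, so $(q;q^2)_k^2$ is divisible by $(1-q^n)^2$, hence by $\Phi_n(q)^2$. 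Consequently $c_q(k)\equiv 0\pmod{\Phi_n(q)^2}$ for $(n+1)/2\le k\le n-1$; this is the analogue, modulo $\Phi_n(q)^2$ rather than exactly, of the vanishing hypothesis in Lemma~\ref{Mohamed-Lemma1-2019}.

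Working in the spirit of Lemma~\ref{Mohamed-Lemma1-2019}, I would then compare the double sum with the full square. Since
\[
\Big(\sum_{k=0}^{n-1}c_q(k)\Big)^2-\sum_{k=0}^{n-1}\sum_{j=0}^{k}c_q(j)c_q(k-j)=\sum_{\substack{0\le i,j\le n-1\\ i+j\ge n}}c_q(i)c_q(j),
\]
and every pair with $i+j\ge n$ forces $\max(i,j)\ge(n+1)/2$, each product on the right carries a factor $\equiv 0\pmod{\Phi_n(q)^2}$; the same reasoning discards the tail $\sum_{k=(n+1)/2}^{n-1}c_q(k)$ inside the square. Hence
\[
\sum_{k=0}^{n-1}\sum_{j=0}^{k}c_q(j)c_q(k-j)\equiv\Big(\sum_{k=0}^{(n-1)/2}c_q(k)\Big)^2\pmod{\Phi_n(q)^2}.
\]

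It therefore remains to prove the single-sum supercongruence
\[
\sum_{k=0}^{(n-1)/2}\frac{2(q;q^2)_k^2q^{2k}}{(q^2;q^2)_k^2(1+q^{2k})}\equiv(-1)^{(n-1)/2}\pmod{\Phi_n(q)^2}
\]
for every odd $n$, which is the exact generalization of the Guo--Zeng prime case (there $\Phi_p(q)=[p]$, so the modulus is $[p]^2$). Squaring this congruence finishes the argument, since $\big((-1)^{(n-1)/2}\big)^2=1$ and the cross term in the square acquires an extra factor $\Phi_n(q)^2$, so the right-hand side collapses to $1$. To prove the single-sum congruence I would use creative microscoping: introduce the parameter $a$ via $\frac{2(aq,q/a;q^2)_kq^{2k}}{(aq^2,q^2/a;q^2)_k(1+q^{2k})}$, which reduces to the summand at $a=1$ and whose factor $(q/a;q^2)_k$ (resp.\ $(aq;q^2)_k$) vanishes for $k\ge(n+1)/2$ at $a=q^{n}$ (resp.\ $a=q^{-n}$); evaluate the terminating sum at $a=q^{\pm n}$ by an appropriate basic hypergeometric summation, obtaining a closed form $g(a)$ with $g(1)\equiv(-1)^{(n-1)/2}$; and specialize $a=1$, where $(1-aq^n)(a-q^n)$ becomes $(1-q^n)^2$ and supplies the needed factor $\Phi_n(q)^2$.

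The main obstacle is precisely this single-sum step. Unlike the sums treated in Sections~3 and~4, the summand here contains the nonstandard factor $\frac{2q^{2k}}{1+q^{2k}}$, which does not fit the very-well-poised $_6\phi_5$ or Watson $_8\phi_7$ templates directly; identifying the correct auxiliary summation (plausibly a quadratic or bilateral transformation, or the specialization at $b=-1$ of a suitable two-parameter identity in the style of \eqref{eq:false-1}) and verifying that its value at $a=1$ is exactly $(-1)^{(n-1)/2}$ modulo $\Phi_n(q)^2$ is the delicate part. Once that closed form is in hand, the reduction above converts it mechanically into the claimed congruence~\eqref{eq:thm5-1}.
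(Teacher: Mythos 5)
Your reduction step is correct and takes a genuinely different route from the paper's. You stay at $a=1$ throughout: since $(q;q^2)_k^2$ picks up the factor $(1-q^n)^2$ for $(n+1)/2\leqslant k\leqslant n-1$, while $(q^2;q^2)_k^2(1+q^{2k})$ is coprime to $\Phi_n(q)$ for odd $n$ and $0\leqslant k\leqslant n-1$, the tail terms are $\equiv 0\pmod{\Phi_n(q)^2}$, and your difference identity (pairs with $i+j\geqslant n$ force $\max(i,j)\geqslant (n+1)/2$) correctly collapses the double sum to the square of the truncated single sum modulo $\Phi_n(q)^2$. The paper instead keeps the parameter alive in the double sum: it takes $z_q(k)$ with $(aq,q/a;q^2)_k$ in the numerator but $(q^2;q^2)_k^2$ untouched in the denominator, applies the first part of Lemma \ref{Mohamed-Lemma1-2019} at $a=q^{\pm n}$ --- where the tail vanishes identically because of $(q^{1\mp n};q^2)_k$, and where \eqref{eq:more-1} becomes the exact evaluation $\sum_{k=0}^{n-1}z_q(k)=(-1)^{(n-1)/2}$ --- so the double sum equals $1$ there; this yields \eqref{eq:thm5-2} modulo $(1-aq^n)(a-q^n)$, and setting $a=1$ gives \eqref{eq:thm5-1}. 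Your route buys a more elementary, purely divisibility-based reduction; the paper's buys the single-sum input for free, since at $a=q^{\pm n}$ only an identity, not a congruence, is needed.

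The ``main obstacle'' you flag is, however, not an obstacle: the single-sum congruence $\sum_{k=0}^{n-1}c_q(k)\equiv(-1)^{(n-1)/2}\pmod{\Phi_n(q)^2}$ for every odd $n$ is precisely the $a=1$ specialization of \eqref{eq:more-1}, i.e.\ of \cite[Corollary 1.4]{Guo-para}, which is the very result the paper quotes; citing it completes your proof. Two cautions about your sketched rederivation of it. First, your parametrization places $(aq^2,q^2/a;q^2)_k$ in the denominator, whereas the parametrization known to work, namely \eqref{eq:more-1}, leaves the denominator $(q^2;q^2)_k^2$ unparametrized; with your choice the terminating sum at $a=q^{\pm n}$ does not obviously match a standard summation, so that specific microscoping plan would likely stall. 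Second, the factor $2q^{2k}/(1+q^{2k})$ is less exotic than you suggest: $2/(1+q^{2k})=(-1;q^2)_k/(-q^2;q^2)_k$, so the summand of \eqref{eq:more-1} is of ${}_3\phi_2$ type and its evaluation at $a=q^{\pm n}$ follows from known $q$-Whipple-type summations (cf.\ \cite{Andrews,Jain}), which is the mechanism behind \cite[Corollary 1.4]{Guo-para}.
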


\begin{proof}
By \cite[Corollary 1.4]{Guo-para}, we have
\begin{equation}
\sum_{k=0}^{n-1}\frac{2(aq,q/a;q^2)_k
q^{2k}}{(q^2;q^2)_{k}^2 (1+q^{2k})} \equiv (-1)^{(n-1)/2}\pmod{(1-aq^n)(a-q^n)}. \label{eq:more-1}
\end{equation}
Let $z_q(k)$ denote the $k$-th term on the left-hand side of \eqref{eq:more-1}.
By Lemma \ref{Mohamed-Lemma1-2019}, we can easily prove that
\begin{equation}
\sum_{k=0}^{n-1}\sum_{j=0}^{k}z_q(j)z_q(k-j)\equiv 1\pmod{(1-aq^n)(a-q^n)}. \label{eq:thm5-2}
\end{equation}
Taking $a=1$ in the above congruence, we obtain the desired $q$-congruence \eqref{eq:thm5-1}.
\end{proof}

The first author and Zudilin \cite[Theorem~2]{GuoZu2} gave the following $q$-supercongruence:
modulo $\Phi_n(q)^2$,
\begin{align*}
\sum_{k=0}^{(n-1)/2}\frac{(q;q^2)_k^2(q^2;q^4)_k}{(q^2;q^2)_k^2(q^4;q^4)_k}q^{2k}
\equiv\begin{cases}
\dfrac{(q^2;q^4)_{(n-1)/4}^2}{(q^4;q^4)_{(n-1)/4}^2}q^{(n-1)/2} &\text{if}\; n\equiv1\pmod4, \\[5pt]
0 &\text{if}\; n\equiv3\pmod4,
\end{cases}
\end{align*}
which is a $q$-analogue of the (H.2) supercongruence of Van Hamme \cite{Hamme}.
The first author \cite{Guo-ijnt} further showed that, for $n\equiv 3\pmod{4}$,
\begin{align}
\sum_{k=0}^{(n-1)/2}\frac{(q;q^2)_k^2(q^2;q^4)_k}{(q^2;q^2)_k^2(q^4;q^4)_k}q^{2k}
\equiv [n]\frac{(q^3;q^4)_{(n-1)/2}}{(q^5;q^4)_{(n-1)/2}}
\pmod{\Phi_n(q)^3},  \label{eq:h-1}
\end{align}
which is also true modulo $\Phi_n(q)^2$ for $n\equiv 1\pmod{4}$. Recently,
Wei \cite{Wei} proved that, for $n\equiv 1\pmod{4}$, modulo $\Phi_n(q)^3$,
\begin{align}
\sum_{k=0}^{(n-1)/2}\frac{(q;q^2)_k^2(q^2;q^4)_k}{(q^2;q^2)_k^2(q^4;q^4)_k}q^{2k}
\equiv  \dfrac{(q^2;q^4)_{(n-1)/4}^2}{(q^4;q^4)_{(n-1)/4}^2}q^{(n-1)/2}
\left(1+2[n]^2\sum_{k=0}^{(n-1)/4}\frac{q^{4k-2}}{[4k-2]^2}\right).  \label{eq:h-2}
\end{align}
It should be mentioned that \eqref{eq:h-1} and \eqref{eq:h-2} may be considered as a $q$-analogue of \cite[Theorem 3]{LR}.

Here we give a $q$-supercongruence on double sums related to \eqref{eq:h-1} and \eqref{eq:h-2}.

\begin{theorem}\label{thm6}
Let $n$ be a positive odd integer, and for $k\geqslant 0$,
$$
c_q(k)=\frac{(q;q^2)_k^2(q^2;q^4)_k}{(q^2;q^2)_k^2(q^4;q^4)_k}q^{2k}.
$$
Then, modulo $\Phi_n(q)^3$,
\begin{align}
&\sum_{k=0}^{n-1}\sum_{j=0}^{k}c_q(j)c_q(k-j)  \notag\\[5pt]
&\quad\equiv \begin{cases}
\dfrac{(q^2;q^4)_{(n-1)/4}^4}{(q^4;q^4)_{(n-1)/4}^4}q^{n-1}
\displaystyle\left(1+4[n]^2\sum_{k=0}^{(n-1)/4}\frac{q^{4k-2}}{[4k-2]^2}\right)  &\text{if}\; n\equiv1\pmod4, \\[10pt]
0&\text{if}\; n\equiv3\pmod4.
\end{cases}\label{eq:thm6-1}
\end{align}
\end{theorem}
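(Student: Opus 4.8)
The plan is to bypass the parametric creative‑microscoping machinery used for Theorems \ref{thm3} and \ref{thm4}: since the modulus is now the pure prime power $\Phi_n(q)^3$ rather than $[n]\Phi_n(q)^3$, one can reduce the double sum directly to the \emph{square} of the single sum and then invoke the evaluations \eqref{eq:h-1} and \eqref{eq:h-2}, which are already recorded modulo $\Phi_n(q)^3$. This is, in effect, a ``modulo $\Phi_n(q)^3$'' version of the first assertion of Lemma \ref{Mohamed-Lemma1-2019}.

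First I would pin down the $\Phi_n(q)$‑adic behaviour of the summand. The denominator $(q^2;q^2)_k^2(q^4;q^4)_k$ is coprime to $\Phi_n(q)$ for $0\le k\le n-1$, since its factors $1-q^{2i}$ and $1-q^{4i}$ with $1\le i\le k$ force $n\mid i$, which is impossible in range. In the numerator, $(q;q^2)_k$ picks up the factor $1-q^{n}$ (from $i=(n-1)/2$) exactly once as soon as $k\ge(n+1)/2$, and $(q^2;q^4)_k$ picks up the factor $1-q^{2n}$ (again from $i=(n-1)/2$) exactly once as soon as $k\ge(n+1)/2$. Hence the numerator is divisible by $\Phi_n(q)^{2}\cdot\Phi_n(q)=\Phi_n(q)^3$, so that
\[
c_q(k)\equiv 0\pmod{\Phi_n(q)^3}\qquad\text{for }(n+1)/2\le k\le n-1.
\]

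The key reduction is now combinatorial. Reindexing, $\sum_{k=0}^{n-1}\sum_{j=0}^{k}c_q(j)c_q(k-j)=\sum_{i+j\le n-1}c_q(i)c_q(j)$, the sum over all pairs of nonnegative integers with $i+j\le n-1$. If $\max(i,j)\ge(n+1)/2$ then the corresponding term is divisible by $\Phi_n(q)^3$ by the previous step, hence drops out modulo $\Phi_n(q)^3$; the surviving pairs are precisely those with $i,j\le(n-1)/2$, all of which automatically satisfy $i+j\le n-1$. Therefore
\[
\sum_{k=0}^{n-1}\sum_{j=0}^{k}c_q(j)c_q(k-j)\equiv\Bigg(\sum_{k=0}^{(n-1)/2}c_q(k)\Bigg)^2\pmod{\Phi_n(q)^3}.
\]
It then remains to square the right‑hand sides of \eqref{eq:h-1} and \eqref{eq:h-2} (squaring respects congruence modulo $\Phi_n(q)^3$, as $A^2-B^2=(A-B)(A+B)$). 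For $n\equiv1\pmod4$ the factor $\big(1+2[n]^2T\big)^2=1+4[n]^2T+4[n]^4T^2$ appears, where $T=\sum_{k=0}^{(n-1)/4}q^{4k-2}/[4k-2]^2$; since the denominators $[4k-2]$ are coprime to $\Phi_n(q)$ in this range, the term $4[n]^4T^2$ is divisible by $\Phi_n(q)^4$ and disappears, leaving exactly the claimed expression. For $n\equiv3\pmod4$ I would show that the quotient $(q^3;q^4)_{(n-1)/2}/(q^5;q^4)_{(n-1)/2}$ is itself divisible by $\Phi_n(q)$: writing $n=4m+3$, the numerator contains $1-q^{4i+3}$ with $i=m$, which is $1-q^{n}$, whereas $n\mid 4i+5$ forces $i=3m+1>(n-3)/2$, out of range, so the denominator stays coprime to $\Phi_n(q)$. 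Thus the single sum is divisible by $\Phi_n(q)^2$ (one factor from $[n]$, one from the quotient), its square by $\Phi_n(q)^4$, and the double sum vanishes modulo $\Phi_n(q)^3$.

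The main obstacle is the $\Phi_n(q)$‑valuation bookkeeping, and specifically the role of $(q^2;q^4)_k$. Because a surviving large index is always paired with a small index whose summand is coprime to $\Phi_n(q)$, killing such terms modulo $\Phi_n(q)^3$ genuinely requires the \emph{full} third power of $\Phi_n(q)$ in $c_q(k)$; the square coming from $(q;q^2)_k^2$ alone would only give $\Phi_n(q)^2$. Verifying that $(q^2;q^4)_k$ supplies the missing factor for every $k\ge(n+1)/2$, together with locating the extra $\Phi_n(q)$ in the numerator of \eqref{eq:h-1} in the $n\equiv3\pmod4$ case, is the crux; everything else is routine simplification.
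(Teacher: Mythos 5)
Your proposal is correct, and it proves the theorem by a genuinely different --- and more economical --- route than the paper's. The paper's proof is parametric throughout: for $n\equiv1\pmod4$ it introduces the two-parameter summand $z_q(k)$, squares two single-sum evaluations (deduced from Andrews' and Jain's $q$-analogues of the Whipple formula) via Lemma \ref{Mohamed-Lemma1-2019}, glues the results modulo $(1-aq^n)(a-q^n)(b-q^n)$ by the Chinese-remainder identities \eqref{eq:ab-1}--\eqref{eq:ab-2} to get \eqref{eq:abqn-h}, sets $b=1$, and recovers the term $4[n]^2\sum_k q^{4k-2}/[4k-2]^2$ by L'H\^ospital's rule as $a\to1$; for $n\equiv3\pmod4$ it switches to a second parametric family $x_q(k)$ combined with the antisymmetry Lemma \ref{lemma:zero}. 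You replace all of this by a $\Phi_n(q)$-adic valuation argument, and your bookkeeping is right: $(q;q^2)_k^2$ contributes $\Phi_n(q)^2$ and $(q^2;q^4)_k$ the indispensable third factor (via $1-q^{2n}$) once $k\geqslant(n+1)/2$; the denominators stay coprime to $\Phi_n(q)$ for all $0\leqslant k\leqslant n-1$; and in a pair with $i+j\leqslant n-1$ at most one index can exceed $(n-1)/2$, so the truncated double sum collapses modulo $\Phi_n(q)^3$ to the square of the half sum, after which squaring \eqref{eq:h-1} (with the extra factor $\Phi_n(q)$ correctly located inside $(q^3;q^4)_{(n-1)/2}$ when $n\equiv3\pmod4$, giving valuation at least $4$ for the square) and \eqref{eq:h-2} (discarding $4[n]^4T^2$, with $T$ indeed $\Phi_n(q)$-integral) finishes. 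Your opening observation is also the right diagnosis of why this shortcut is available here but not for Theorems \ref{thm3} and \ref{thm4}: the modulus is the pure power $\Phi_n(q)^3$ with no factor $[n]$, so nothing need be verified at non-primitive roots of unity, whereas killing cross terms genuinely requires the full cube (e.g.\ $c_q(0)=1$ is a unit), which one cannot have modulo $[n]$. What the paper's longer route buys is independence from Wei's congruence \eqref{eq:h-2}, which you consume as a black box: the parametric proof re-derives the needed expansion from more primitive ingredients and produces two-parameter congruences of independent interest along the way. One inherited blemish, not chargeable to you: the lower limit $k=0$ in \eqref{eq:h-2} and in \eqref{eq:thm6-1} disagrees with the $k=1$ produced by the paper's own L'H\^ospital computation (the $k=0$ term equals $4[n]^2q^2/(1+q)^2$, which is not $\equiv 0 \pmod{\Phi_n(q)^3}$ in general); since your derivation squares \eqref{eq:h-2} exactly as recorded, your conclusion matches the stated theorem verbatim and remains consistent under either convention once the two displays are harmonized.
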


\begin{proof}We first consider the $n\equiv 1\pmod{4}$ case. For $k\geqslant 0$, let
$$
z_q(k)=\frac{(aq,q/a,q/b,-q/b;q^2)_k}{(q^2,q^2,-q^2,q^2/b^2;q^2)_k}q^{2k}.
$$
Using the following two congruences  (see \cite{Wei})
\begin{align*}
\sum_{k=0}^{(n-1)/2}z_q(k)
&\equiv  \frac{(q^2,b^2q^2;q^4)_{(n-1)/4}}{(q^4,q^4/b^2;q^4)_{(n-1)/4}}\left(\frac{q}{b}\right)^{(n-1)/2}\pmod{(1-aq^n)(a-q^n)}, \\[5pt]
\sum_{k=0}^{(n-1)/2}z_q(k)
&\equiv\frac{(aq^3,q^3/a;q^4)_{(n-1)/2}}{(q^2;q^2)_{n-1}} \notag\\[5pt]
&\equiv \frac{(abq^2,bq^2/a,aq^2/b,q^2/ab;q^4)_{(n-1)/4}}{(q^2,q^4,q^2/b^2,q^4/b^2;q^4)_{(n-1)/4}}\left(\frac{q}{b}\right)^{(n-1)/2}\pmod{b-q^n},
\end{align*}
which may be deduced from Andrews' $q$-analogue of the Whipple formula \cite{Andrews} and Jain's $q$-analogue of the Whipple formula \cite{Jain}, respectively,
in view of Lemma \ref{Mohamed-Lemma1-2019}, we can prove that
\begin{align*}
\sum_{k=0}^{n-1}\sum_{j=0}^{k}z_q(j)z_q(k-j)
&\equiv  \frac{(q^2,b^2q^2;q^4)_{(n-1)/4}^2 }{(q^4,q^4/b^2;q^4)_{(n-1)/4}^2}\left(\frac{q}{b}\right)^{n-1} \pmod{(1-aq^n)(a-q^n)}, \\[5pt]
\sum_{k=0}^{n-1}\sum_{j=0}^{k}z_q(j)z_q(k-j)
&\equiv\frac{(abq^2,bq^2/a,aq^2/b,q^2/ab;q^4)_{(n-1)/4}^2}{(q^2,q^4,q^2/b^2,q^4/b^2;q^4)_{(n-1)/4}^2}\left(\frac{q}{b}\right)^{n-1} \pmod{b-q^n}.
\end{align*}
By \eqref{eq:ab-1} and \eqref{eq:ab-2}, we conclude that, modulo $(1-aq^n)(a-q^n)(b-q^n)$,
\begin{align}
&\sum_{k=0}^{n-1}\sum_{j=0}^{k}z_q(j)z_q(k-j)\notag\\[5pt]
&\quad\equiv\frac{(b-q^n)(ab-1-a^2+aq^n)}{(a-b)(1-ab)}\frac{(q^2,b^2q^2;q^4)_{(n-1)/4}^2 }{(q^4,q^4/b^2;q^4)_{(n-1)/4}^2}\left(\frac{q}{b}\right)^{n-1} \notag\\[5pt]
&\quad\quad+\frac{(1-aq^n)(a-q^n)}{(a-b)(1-ab)}\frac{(abq^2,bq^2/a,aq^2/b,q^2/ab;q^4)_{(n-1)/4}^2}{(q^2,q^4,q^2/b^2,q^4/b^2;q^4)_{(n-1)/4}^2}\left(\frac{q}{b}\right)^{n-1} \label{eq:abqn-h}
\end{align}

Putting $b=1$ in \eqref{eq:abqn-h}, we are led to the congruence: modulo $\Phi_n(q)(1-aq^n)(a-q^n)$,
\begin{align}
\sum_{k=0}^{n-1}\sum_{j=0}^{k}y_q(j)y_q(k-j)
&\equiv
\frac{(q^2;q^4)_{(n-1)/4}^4}{(q^4;q^4)_{(n-1)/4}^4}q^{n-1}+\frac{(1-aq^n)(a-q^n)}{(1-a)^2}q^{n-1}
\notag\\[5pt]
&\quad\times\left(\frac{(q^2;q^4)_{(n-1)/4}^4}{(q^4;q^4)_{(n-1)/4}^4}-\frac{(aq^2,q^2/a;q^4)_{(n-1)/4}^4}{(q^2,q^4;q^4)_{(n-1)/4}^4}\right),  \label{eq:abqn-h-2}
\end{align}
where
$$
y_q(k)=\frac{(aq,q/a,q,-q;q^2)_k}{(q^2,q^2,-q^2,q^2;q^2)_k}q^{2k}.
$$
By L'H\^ospital's rule, we have
\begin{align*}
&\lim_{a\to1}\frac{(1-aq^n)(a-q^n)}{(1-a)^2}\left(\frac{(q^2;q^4)_{(n-1)/4}^4}{(q^4;q^4)_{(n-1)/4}^4}
-\frac{(aq^2,q^2/a;q^4)_{(n-1)/4}^4}{(q^2,q^4;q^4)_{(n-1)/4}^4}\right)\\[5pt]
&=4[n]^2\frac{(q^2;q^4)_{(n-1)/4}^4}{(q^4;q^4)_{(n-1)/4}^4}\sum_{k=1}^{(n-1)/4}\frac{q^{4k-2}}{[4k-2]^2}.
\end{align*}
Thus, letting $a\to1$ in \eqref{eq:abqn-h-2}, we arrive at the first case of \eqref{eq:thm6-1}.

We now consider the $n\equiv 3\pmod{4}$ case. We need to introduce another parametric generalization. For $k\geqslant 0$, let
$$
x_q(k)=\frac{(aq,q/a;q^2)_k (q^2;q^4)_k}{(aq^2,q^2/a;q^2)_k (q^4;q^4)_k}q^{2k}
$$
The first author \cite{Guo-ijnt} gave the following parametric generalization of \eqref{eq:h-1}: modulo $\Phi_n(q)(1-aq^n)(a-q^n)$,
\begin{align*}
\sum_{k=0}^{(n-1)/2}x_q(k)
\equiv [n]\frac{(q^3;q^4)_{(n-1)/2}}{(q^5;q^4)_{(n-1)/2}}.
\end{align*}
Like before, using this congruence and Lemma \ref{Mohamed-Lemma1-2019}, we can prove that
\begin{align*}
\sum_{k=0}^{n-1}\sum_{j=0}^{k}x_q(j)x_q(k-j)
\equiv [n]^2\frac{(q^3;q^4)_{(n-1)/2}^2}{(q^5;q^4)_{(n-1)/2}^2} \pmod{(1-aq^n)(a-q^n)}.
\end{align*}
Moreover, from \eqref{eq:gs} we can easily verify that \eqref{eq:zqk-1} and \eqref{eq:zqk-2}
also hold in this case.
In light of Lemma \ref{lemma:zero}, we conclude that
\begin{equation*}
\sum_{k=0}^{n-1}\sum_{j=0}^{k}z_q(j)z_q(k-j)\equiv 0\pmod{\Phi_n(q)}.
\end{equation*}
Since the polynomials $(1-aq^n)(a-q^n)$ and $\Phi_n(q)$ are relatively prime, we get
\begin{align*}
\sum_{k=0}^{n-1}\sum_{j=0}^{k}x_q(j)x_q(k-j)
\equiv [n]^2\frac{(q^3;q^4)_{(n-1)/2}^2}{(q^5;q^4)_{(n-1)/2}^2} \pmod{\Phi_n(q)(1-aq^n)(a-q^n)}.
\end{align*}
Letting $a\to1$ in the above congruence, we arrive at the second case of \eqref{eq:thm6-1}.
\end{proof}

Consider the case where $n=p$ is a prime in Theorem \ref{thm6}.
By \cite[Proposition 1.3]{Wei}, we immediately obtain the following conclusion: for any odd prime $p$,
\begin{align}
\sum_{k=0}^{p-1}\frac1{64^{k}}\sum_{j=0}^{k} {2j\choose j}^3{2k-2j\choose k-j}^3
\equiv
\begin{cases}
 \Gamma_p(1/4)^8 \pmod{p^3} &\text{if $p\equiv 1\pmod{4}$},\\[5pt]
 0\pmod{p^3} &\text{if $p\equiv 3\pmod{4}$},
\end{cases} \label{eq:h-cases}
\end{align}
where $\Gamma_p(x)$ is the $p$-adic Gamma function.

It is proved in \cite{Guo-a2} and \cite{WY2} that
\begin{align}
&\sum_{k=0}^{(n-1)/2}(-1)^k[4k+1]\frac{(q;q^2)_k^4(q^2;q^{4})_k}{(q^2;q^2)_k^4(q^4;q^4)_k}q^k  \notag\\[5pt]
&\quad\equiv \begin{cases}
[n]\dfrac{(q^2;q^4)_{(n-1)/4}^2}{(q^4;q^4)_{(n-1)/4}^2} \pmod{[n]\Phi_n(q)^2} &\text{if $n\equiv1\pmod{4}$}, \\[12pt]
0\pmod{[n]\Phi_n(q)^2} &\text{if $n\equiv3\pmod{4}$},
\end{cases}
\end{align}
which is a $q$-analogue of the (A.2) supercongruence of Van Hamme \cite{Hamme}. We have the following
related $q$-supercongruences on double sums.

\begin{theorem}\label{thm7}
Let $n$ be a positive odd integer, and for $k\geqslant 0$,
$$
c_q(k)=(-1)^k[4k+1]\frac{(q;q^2)_k^4(q^2;q^{4})_k}{(q^2;q^2)_k^4(q^4;q^4)_k}q^k.
$$
Then, modulo $[n]\Phi_n(q)^2$,
\begin{align}
\sum_{k=0}^{n-1}\sum_{j=0}^{k}c_q(j)c_q(k-j)
\equiv \begin{cases}
[n]^2\dfrac{(q^2;q^4)_{(n-1)/4}^4}{(q^4;q^4)_{(n-1)/4}^4}  &\text{if}\; n\equiv1\pmod4, \\[10pt]
0&\text{if}\; n\equiv3\pmod4.
\end{cases}\label{eq:thm7-1}
\end{align}
\end{theorem}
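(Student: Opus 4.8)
The plan is to follow the two-module strategy already used for Theorems \ref{thm3} and \ref{thm4}: introduce a free parameter $a$, prove the assertion separately modulo $\Phi_n(q)^3$ and modulo $[n]$, and then glue. Since $[n]=\prod_{d\mid n,\,d>1}\Phi_d(q)$ meets $\Phi_n(q)^3$ only in the single factor $\Phi_n(q)$, their least common multiple is exactly $[n]\Phi_n(q)^2$, so congruences with the same right-hand side modulo $[n]$ and modulo $\Phi_n(q)^3$ combine to the desired one modulo $[n]\Phi_n(q)^2$. Accordingly, for $k\geqslant0$ I set
\[
z_q(k)=(-1)^k[4k+1]\frac{(aq,q/a;q^2)_k(q;q^2)_k^2(q^2;q^4)_k}{(aq^2,q^2/a;q^2)_k(q^2;q^2)_k^2(q^4;q^4)_k}q^k,
\]
which reduces to $c_q(k)$ at $a=1$.

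For the $\Phi_n(q)^3$ part I would first show $z_q(k)\equiv-z_q((n-1)/2-k)\pmod{\Phi_n(q)}$ on $0\leqslant k\leqslant(n-1)/2$ and $z_q(k)\equiv-z_q((3n-1)/2-k)\pmod{\Phi_n(q)}$ on $(n+1)/2\leqslant k\leqslant n-1$, i.e. the analogues of \eqref{eq:zqk-1} and \eqref{eq:zqk-2}. As in the proofs of Theorems \ref{thm3} and \ref{thm4} the well-poised block $(aq,q/a;q^2)_k/(aq^2,q^2/a;q^2)_k$ is controlled by \eqref{eq:gs}, while the reflections of $(q;q^2)_k^2$, $(q^2;q^2)_k^2$, $(q^2;q^4)_k$ and $(q^4;q^4)_k$ modulo $\Phi_n(q)$ are elementary. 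The feature that makes this succeed for both residues of $n$ modulo $4$ (unlike in Theorem \ref{thm6}) is the factor $[4k+1]$: when $n\equiv1\pmod4$ the involution $k\mapsto(n-1)/2-k$ fixes $k=(n-1)/4$, and there $[4k+1]=[n]\equiv0\pmod{\Phi_n(q)}$, so the diagonal term vanishes and Lemma \ref{lemma:zero} applies unobstructed, giving $\sum_{k=0}^{n-1}\sum_{j=0}^{k}z_q(j)z_q(k-j)\equiv0\pmod{\Phi_n(q)}$. Next, specializing $a=q^{\pm n}$ forces $z_q(k)=0$ for $(n+1)/2\leqslant k\leqslant n-1$, so by Lemma \ref{Mohamed-Lemma1-2019} the double sum becomes the square of the single sum; inserting the parametric $q$-analogue of the (A.2) supercongruence from \cite{Guo-a2,WY2} evaluated at $a=q^{\pm n}$ yields the double sum $\equiv V^2\pmod{(1-aq^n)(a-q^n)}$, where $V$ is the common (by the $a\leftrightarrow1/a$ symmetry of $z_q$) closed form of that single sum — namely $0$ for $n\equiv3\pmod4$ and a multiple of $[n]$ for $n\equiv1\pmod4$. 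Because $V^2\equiv0\pmod{\Phi_n(q)}$ in both cases, the two congruences are compatible over the coprime moduli $\Phi_n(q)$ and $(1-aq^n)(a-q^n)$, so they combine to one modulo $\Phi_n(q)(1-aq^n)(a-q^n)$; letting $a\to1$ widens the modulus to $\Phi_n(q)(1-q^n)^2$, which contains $\Phi_n(q)^3$, and since the specialization $a=1$ of $V$ agrees with the known single-sum value modulo $\Phi_n(q)^2$, squaring reproduces the right-hand side of \eqref{eq:thm7-1} modulo $\Phi_n(q)^3$.

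For the $[n]$ part I would run the root-of-unity argument of Theorem \ref{thm:thm-a1}. If $\zeta$ is a primitive $d$-th root of unity with $d\mid n$ and $d>1$, then $(q;q^2)_k$ forces $c_\zeta(k)=0$ for $(d+1)/2\leqslant k\leqslant d-1$, while the $n=d$ instance of the (A.2) evaluation is divisible by $[d]$ and hence vanishes at $q=\zeta$; thus $\sum_{k=0}^{d-1}\sum_{j=0}^{k}c_\zeta(j)c_\zeta(k-j)=0$ by the first part of Lemma \ref{Mohamed-Lemma1-2019}. The relation $c_\zeta(\ell d+k)/c_\zeta(\ell d)=c_\zeta(k)$ together with the second part of that lemma propagates this across $0\leqslant m\leqslant n-1$, giving $\sum_{k=0}^{n-1}\sum_{j=0}^{k}c_q(j)c_q(k-j)\equiv0\pmod{[n]}$; as the right-hand side of \eqref{eq:thm7-1} is divisible by $[n]^2$, both sides agree modulo $[n]$. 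Combining with the previous paragraph over the least common multiple $[n]\Phi_n(q)^2$ establishes \eqref{eq:thm7-1}.

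The step I expect to be the main obstacle is the parametric single-sum evaluation at $a=q^{\pm n}$, in precisely the form whose square reproduces $[n]^2(q^2;q^4)_{(n-1)/4}^4/(q^4;q^4)_{(n-1)/4}^4$ when $n\equiv1\pmod4$, together with the modulo-$\Phi_n(q)$ verification of \eqref{eq:zqk-1}--\eqref{eq:zqk-2} for this summand. It is the block $(q^2;q^4)_k/(q^4;q^4)_k$, which reflects differently according to $n\bmod4$, rather than the well-poised block governed by \eqref{eq:gs}, that is responsible for the dichotomy in \eqref{eq:thm7-1}. Should the one-parameter evaluation fail to be summable in closed form, the fallback — exactly as in the $n\equiv1$ branch of Theorem \ref{thm6} — is to carry a second parameter $b$, derive the companion congruences modulo $(1-aq^n)(a-q^n)$ and modulo $b-q^n$ from the pertinent terminating $q$-Whipple-type summations, recombine them through \eqref{eq:ab-1}, \eqref{eq:ab-2} and \eqref{eq:relation}, and set $b=1$ before letting $a\to1$.
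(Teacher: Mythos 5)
Your proposal is correct and runs on the same engine as the paper's proof: the identical parametric summand $z_q(k)$, the parametric $q$-analogue of the (A.2) supercongruence from \cite{Guo-a2,WY2} modulo $[n](1-aq^n)(a-q^n)$, El Bachraoui's Lemma \ref{Mohamed-Lemma1-2019} at $a=q^{\pm n}$ (where $z_q(k)=0$ for $(n+1)/2\leqslant k\leqslant n-1$, so the double sum collapses to the square of the single sum), and the root-of-unity argument for the modulus $[n]$. Where you genuinely deviate is in how the power $\Phi_n(q)^3$ is produced: you import the symmetry machinery of Theorems \ref{thm3} and \ref{thm4}, i.e.\ the reflections $z_q(k)\equiv -z_q((n-1)/2-k)$ and $z_q(k)\equiv -z_q((3n-1)/2-k)$ modulo $\Phi_n(q)$ together with Lemma \ref{lemma:zero}, to reach a parametric congruence modulo $\Phi_n(q)(1-aq^n)(a-q^n)$ before letting $a\to1$. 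This does work --- your observation that the factor $[4k+1]=[n]$ annihilates the fixed point $k=(n-1)/4$ when $n\equiv1\pmod4$ is exactly the right point, and the reflection of the block $(q^2;q^4)_k/(q^4;q^4)_k$ combines with that of $(-1)^k$ (whose reflection contributes $(-1)^{(n-1)/2}$) so that the total sign is $-1$ for both residues of $n$ modulo $4$; I verified $n=3$ and $n=5$ directly --- but it is a detour the paper avoids. Following the Theorem \ref{thm1} template, the paper glues the modulus-$[n]$ and modulus-$(1-aq^n)(a-q^n)$ congruences \emph{before} specializing, obtaining the double-sum congruence modulo $[n](1-aq^n)(a-q^n)$; at $a=1$ the specialization $[n](1-q^n)^2$ already has $\Phi_n(q)$-adic valuation three, so the conclusion modulo $\Phi_n(q)^3$, and hence modulo $[n]\Phi_n(q)^2$ by the same lcm argument you use, follows with no symmetry lemma at all. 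Thus the step you flag as the main obstacle is dispensable, and so is your fallback through a second parameter $b$ with the CRT identities \eqref{eq:ab-1} and \eqref{eq:ab-2} (the Theorem \ref{thm6} machinery): it is not needed here because the (A.2) evaluation is already a closed form in $a$ alone. The trade-off is that your route proves slightly more along the way (a congruence modulo $\Phi_n(q)(1-aq^n)(a-q^n)$ valid for generic $a$), while the paper's route is shorter and sidesteps the reflection computations for the $(q^2;q^4)_k/(q^4;q^4)_k$ block entirely.
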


\begin{proof}For $k\geqslant 0$, let
$$
z_q(k)=(-1)^k[4k+1]\frac{(aq,q/a,q,q;q^2)_k(q^2;q^{4})_k}{(aq^2,q^2/a,q^2,q^2;q^2)_k(q^4;q^4)_k}q^k.
$$
Using the following congruence in \cite{Guo-a2} and \cite{WY2}: modulo $[n](1-aq^n)(a-q^n)$,
\begin{align*}
\sum_{k=0}^{(n-1)/2}z_q(k)
\equiv \begin{cases}
[n]\dfrac{(q^2;q^4)_{(n-1)/4}^2}{(q^4;q^4)_{(n-1)/4}^2}  &\text{if $n\equiv1\pmod{4}$}, \\[12pt]
0 &\text{if $n\equiv3\pmod{4}$},
\end{cases}
\end{align*}
Similarly to the proof of Theorem \ref{thm1}, we can prove that, modulo $[n](1-aq^n)(a-q^n)$,
\begin{align*}
\sum_{k=0}^{n-1}\sum_{j=0}^{k}z_q(j)z_q(k-j)
\equiv \begin{cases}
[n]^2\dfrac{(q^2;q^4)_{(n-1)/4}^4}{(q^4;q^4)_{(n-1)/4}^4}  &\text{if $n\equiv1\pmod{4}$}, \\[12pt]
0 &\text{if $n\equiv3\pmod{4}$}.
\end{cases}
\end{align*}
Finally, letting $a=1$ in the above congruence, we are led to the desired $q$-supercongruence \eqref{eq:thm7-1}.
\end{proof}

The first author and Schlosser \cite[Theorem 2.1]{GS} gave the following $q$-supercongruence:
\begin{align*}
\sum_{k=0}^{(n-1)/2}[4k+1]\frac{(q;q^2)_k^6}{(q^2;q^2)_k^6} q^k
\equiv q^{(1-n)/2}[n] \sum_{k=0}^{(n-1)/2} \frac{(q;q^2)_k^4}{(q^2;q^2)_k^4} q^{2k}
\pmod{[n]\Phi_n(q)^2},
\end{align*}
which is a partial $q$-analogue of a superconruence of Long \cite{Long}. Using a parametric generalization of
the above $q$-supercongruence \cite[Theorem 3.3 with $b=1$]{GS}, we can prove the following theorem.
Since the proof is exactly the same as that of Theorem \ref{thm1}, we omit the details here.

\begin{theorem}\label{thm8}
Let $n$ be a positive odd integer, and for $k\geqslant 0$,
$$
c_q(k)=[4k+1]\frac{(q;q^2)_k^6}{(q^2;q^2)_k^6} q^k.
$$
Then
\begin{align*}
\sum_{k=0}^{n-1}\sum_{j=0}^{k}c_q(j)c_q(k-j)
\equiv q^{1-n}[n]^2 \left(\sum_{k=0}^{(n-1)/2} \frac{(q;q^2)_k^4}{(q^2;q^2)_k^4} q^{2k}\right)^2 \pmod{[n]\Phi_n(q)^2}.
\end{align*}
\end{theorem}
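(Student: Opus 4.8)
The plan is to prove, exactly as in the proof of Theorem~\ref{thm1}, a parametric refinement carrying an extra indeterminate $a$, and then to specialize $a=1$. Introduce
$$
z_q(k)=[4k+1]\frac{(aq,q/a;q^2)_k(q;q^2)_k^4}{(aq^2,q^2/a;q^2)_k(q^2;q^2)_k^4}q^k,
\qquad
S_q(a)=\sum_{k=0}^{(n-1)/2}\frac{(aq,q/a;q^2)_k(q;q^2)_k^2}{(aq^2,q^2/a;q^2)_k(q^2;q^2)_k^2}q^{2k},
$$
so that $z_q(k)|_{a=1}=c_q(k)$ and $S_q(1)=\sum_{k=0}^{(n-1)/2}(q;q^2)_k^4/(q^2;q^2)_k^4\,q^{2k}$. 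The single external ingredient is the parametric generalization \cite[Theorem~3.3 with $b=1$]{GS}, which asserts
$$
\sum_{k=0}^{(n-1)/2}z_q(k)\equiv q^{(1-n)/2}[n]\,S_q(a)\pmod{[n](1-aq^n)(a-q^n)}.
$$
My goal is the parametric double-sum congruence
$$
\sum_{k=0}^{n-1}\sum_{j=0}^{k}z_q(j)z_q(k-j)\equiv q^{1-n}[n]^2\,S_q(a)^2\pmod{[n](1-aq^n)(a-q^n)},
$$
after which putting $a=1$ finishes the proof.

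First I would establish this congruence modulo $[n]$. Let $\zeta\neq1$ be an $n$-th root of unity, say a primitive $d$-th root with $d\mid n$ and $d>1$. Because $(q;q^2)_k$ sits in the numerator of $z_q(k)$, one has $(\zeta;\zeta^2)_k=0$, and hence $z_\zeta(k)=0$, for $(d+1)/2\leqslant k\leqslant d-1$. Evaluating the parametric summation at $n=d$ and $q=\zeta$, where $[d]$ vanishes, forces $\sum_{k=0}^{d-1}z_\zeta(k)=\sum_{k=0}^{(d-1)/2}z_\zeta(k)=0$. Since $z_\zeta(ld+k)/z_\zeta(ld)=z_\zeta(k)$ for $0\leqslant k\leqslant d-1$, two applications of Lemma~\ref{Mohamed-Lemma1-2019} give, exactly as in the proof of Theorem~\ref{thm:thm-a1},
$$
\sum_{m=0}^{n-1}\sum_{j=0}^{m}z_\zeta(j)z_\zeta(m-j)=0 .
$$
As this holds for every such $\zeta$, the double sum is $\equiv0\pmod{[n]}$; and the right-hand side $q^{1-n}[n]^2S_q(a)^2$ carries the factor $[n]^2$, so the two sides agree modulo $[n]$.

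Next I would handle the factors $1-aq^n$ and $a-q^n$. For $a=q^{n}$ or $a=q^{-n}$, the factor $(q/a;q^2)_k$ vanishes for $(n+1)/2\leqslant k\leqslant n-1$, so $z_q(k)=0$ there and the parametric summation becomes the exact identity $\sum_{k=0}^{n-1}z_q(k)=q^{(1-n)/2}[n]S_q(a)$. Lemma~\ref{Mohamed-Lemma1-2019} then yields
$$
\sum_{k=0}^{n-1}\sum_{j=0}^{k}z_q(j)z_q(k-j)=\Bigg(\sum_{k=0}^{n-1}z_q(k)\Bigg)^2=q^{1-n}[n]^2S_q(a)^2,
$$
which is precisely the target right-hand side evaluated at $a=q^{\pm n}$; hence the double-sum congruence also holds modulo $1-aq^n$ and modulo $a-q^n$. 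Since $[n]$, $1-aq^n$ and $a-q^n$ are pairwise coprime, the three congruences combine into the parametric congruence displayed above. Finally, setting $a=1$ turns the modulus into $[n](1-q^n)^2$, which is a multiple of $[n]\Phi_n(q)^2$, and turns $S_q(1)^2$ into the square of the fourth-power sum; this is exactly the congruence asserted in Theorem~\ref{thm8}.

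The main obstacle, relative to Theorem~\ref{thm1}, is the bookkeeping of the nontrivial right-hand side. In Theorem~\ref{thm1} the summation collapses to the $a$-free quantity $q^{(1-n)/2}[n]$, whereas here it produces the $a$-dependent sum $S_q(a)$, so one must carry $S_q(a)^2$ through all three local computations and verify both that its values at $a=q^{\pm n}$ coincide with the outputs of Lemma~\ref{Mohamed-Lemma1-2019} and that the specialization $a=1$ reproduces exactly the sum in the statement. The remaining ingredients -- the vanishing of $z_\zeta(k)$ and of $z_q(k)$ at $a=q^{\pm n}$ in the upper half range, the ratio identity $z_\zeta(ld+k)/z_\zeta(ld)=z_\zeta(k)$, and the final coprimality step -- are routine and identical to the proof of Theorem~\ref{thm:thm-a1}.
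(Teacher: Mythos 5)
Your overall route is exactly the one the paper intends: the paper omits the proof of Theorem~\ref{thm8} precisely because it is the argument of Theorem~\ref{thm1} run with the parametric congruence \cite[Theorem~3.3 with $b=1$]{GS}, and your treatment of the three local moduli is right --- the root-of-unity argument for the modulus $[n]$ (vanishing of $z_\zeta(k)$ in the upper half range, the ratio identity, two applications of Lemma~\ref{Mohamed-Lemma1-2019}), and the exact evaluation at $a=q^{\pm n}$ giving the congruences modulo $1-aq^n$ and $a-q^n$. (A trivial slip: at $a=q^{-n}$ it is $(aq;q^2)_k$, not $(q/a;q^2)_k$, that vanishes for $(n+1)/2\leqslant k\leqslant n-1$; the product $(aq,q/a;q^2)_k$ vanishes in both cases.)

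However, your final step contains a genuine gap. From the parametric congruence modulo $[n](1-aq^n)(a-q^n)$ you cannot conclude, by merely substituting $a=1$, a congruence modulo $[n](1-q^n)^2$. The parametric congruence is between rational functions of $a$: the difference of the two sides equals $[n](1-aq^n)(a-q^n)\,R(a,q)$, where the denominator of $R$ involves factors such as $(aq^2,q^2/a;q^2)_{n-1}$, whose specialization at $a=1$ is essentially $(q^2;q^2)_{n-1}^2$ and is divisible by $\Phi_d(q)$ for every divisor $d$ of $n$ with $1<d<n$ --- that is, it is \emph{not} coprime to $[n]$. Hence at $a=1$ only the $\Phi_n(q)$-part of the modulus survives, yielding the congruence modulo $\Phi_n(q)^3$ rather than modulo $[n](1-q^n)^2=[n]^3(1-q)^2$. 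Indeed, if your inference were valid it would prove Theorem~\ref{thm8} modulo $[n]^3$, which is stronger than even the strengthening the paper only \emph{conjectures} in Section~6 (modulo $[n]^2\Phi_n(q)^3$) --- a clear warning sign. The repair is one sentence and is exactly how the paper ends the proof of Theorem~\ref{thm1}: note that your roots-of-unity argument remains valid at $a=1$ (the denominators $(q^2;q^2)_k$, $k\leqslant n-1$, do not vanish at the relevant roots of unity), so the asserted congruence holds modulo $[n]$; combine this with the modulo $\Phi_n(q)^3$ congruence obtained from $a=1$ and use $\operatorname{lcm}\bigl(\Phi_n(q)^3,[n]\bigr)=[n]\Phi_n(q)^2$.
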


\section{Open problems and concluding remarks}

Letting $n=p^r$ be an odd prime power and $q\to 1$ in \eqref{eq:thm1} and \eqref{eq:thm2}, we obtain the following supercongruences:
for any odd prime $p$ and positive integer $r$,
\begin{align*}
\sum_{k=0}^{p^r-1}\frac1{16^{k}}\sum_{j=0}^{k} {2j\choose j}^3{2k-2j\choose k-j}^3(3j+1)(3k-3j+1)
&\equiv p^{2r}\pmod{p^{r+2}},\\
\sum_{k=0}^{p^r-1}\frac1{(-8)^{k}}\sum_{j=0}^{k} {2j\choose j}^3{2k-2j\choose k-j}^3(3j+1)(3k-3j+1)
&\equiv p^{2r}\pmod{p^{r+2}}.
\end{align*}

We have a conjecture related to the above two supercongruences.

\begin{conjecture} Let $p$ be an odd prime and $r$ a positive integer. Then
\begin{align*}
\sum_{k=0}^{p^r-1}\left(\frac{1}{16^k}-\frac{1}{(-8)^k}\right)\sum_{j=0}^k {2j\choose j}^3{2k-2j\choose k-j}^3 (3j+1)(3k-3j+1)
\equiv 0 \pmod{p^{2r+2}}.
\end{align*}
\end{conjecture}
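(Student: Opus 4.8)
The plan is to recognize the displayed sum as the $q\to1$, $n=p^r$ specialization of the difference of the two $q$-double sums of Theorems \ref{thm1} and \ref{thm2}, and then to sharpen the underlying $q$-supercongruence by one factor of $[n]$. Writing $c_q^{(1)}(k)$ and $c_q^{(2)}(k)$ for the summands of Theorems \ref{thm1} and \ref{thm2}, the standard limits of $q$-shifted factorials give $c_1^{(1)}(j)c_1^{(1)}(k-j)=16^{-k}(3j+1)(3k-3j+1)\binom{2j}{j}^3\binom{2k-2j}{k-j}^3$ and $c_1^{(2)}(j)c_1^{(2)}(k-j)=(-8)^{-k}(3j+1)(3k-3j+1)\binom{2j}{j}^3\binom{2k-2j}{k-j}^3$, so the left-hand side of the conjecture is exactly $D_q\big|_{q=1,\,n=p^r}$, where
$$
D_q:=\sum_{k=0}^{n-1}\sum_{j=0}^{k}c_q^{(1)}(j)c_q^{(1)}(k-j)-\sum_{k=0}^{n-1}\sum_{j=0}^{k}c_q^{(2)}(j)c_q^{(2)}(k-j).
$$
By \eqref{eq:thm1} and \eqref{eq:thm2} (equivalently, by the exact evaluations at $a=q^{\pm n}$ that produce them), $D_q\equiv R(q):=(q^{1-n}-q^{(n-1)^2/2})[n]^2\pmod{[n]\Phi_n(q)^2}$. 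Since $R(1)=0$ while $[n]\Phi_n(q)^2\big|_{q=1}=p^{r+2}$, this only reproduces the modulus $p^{r+2}$. The decisive feature is that $R(q)$ vanishes at $q=1$: it therefore suffices to prove the stronger statement
$$
D_q\equiv R(q)\pmod{[n]^2\Phi_n(q)^2},
$$
because $[n]^2\Phi_n(q)^2\big|_{q=1}=p^{2r+2}$ and the residue contributes nothing after setting $q=1$.

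To establish this stronger congruence I would split the modulus into coprime cyclotomic blocks. For $n=p^r$,
$$
[n]^2\Phi_n(q)^2=\Phi_{p^r}(q)^4\prod_{s=1}^{r-1}\Phi_{p^s}(q)^2,
$$
so by the Chinese remainder theorem for coprime polynomials (already used in Sections 3 and 4) it is enough to prove $D_q\equiv 0\pmod{\Phi_{p^s}(q)^2}$ for $1\le s\le r-1$, where $R(q)\equiv 0$ since $[n]^2$ already carries the factor $\Phi_{p^s}(q)^2$, together with $D_q\equiv R(q)\pmod{\Phi_{p^r}(q)^4}$ on the top block. Both tasks call for the two-parameter creative microscoping of Theorems \ref{thm-a3} and \ref{thm-a4}: one introduces $a$ and $b$, records parametric refinements of the single-sum evaluations \eqref{eq:a1}--\eqref{eq:a2} one order deeper, passes to the double sums via El Bachraoui's Lemma \ref{Mohamed-Lemma1-2019}, glues the data at $a=q^{\pm n}$ and $b=q^{n}$ through \eqref{eq:ab-1}--\eqref{eq:ab-2}, and finally sets $b=1$ (gaining a factor $\Phi_n(q)$) and then $a=1$. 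The difference against the second family is taken at the end, the needed extra cancellation being supplied by $R(1)=0$.

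The lower blocks $\Phi_{p^s}(q)^2$ ($s<r$) amount to upgrading the mod-$[n]$ vanishing behind \eqref{eq2.5} to a second-order statement at the non-primitive roots of unity; this is technically involved but stays within the method, since there the target residue is $0$. The principal obstacle is the top block $\Phi_{p^r}(q)^4$. Reaching modulus $\Phi_n(q)^4$ lies one cyclotomic power beyond the modulus $[n]\Phi_n(q)^3$ attained in Theorems \ref{thm3} and \ref{thm4}, and it forces the second- and third-order coefficients, in the expansion at an $n$-th root of unity via \eqref{eq:gs}, of the two structurally different families --- one built from $(q;q^2)_k^3q^{-\binom{k+1}{2}}/\big((q;q)_k^2(q^2;q^2)_k\big)$, the other from $(-1)^k(q;q^2)_k^3/(q;q)_k^3$ --- to coincide. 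I expect this coincidence to require either new $_8\phi_7$- or $_6\phi_5$-transformation refinements furnishing the two extra parametric orders, or a $q$-WZ pair certifying $D_q$ directly. A softer intermediate route that already implies the conjecture is to evaluate each double sum separately modulo $[n]^2\Phi_n(q)^2$ with an explicit residue, and then verify that the two residues differ by exactly $R(q)$.
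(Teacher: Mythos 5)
This statement is Conjecture 6.1 of the paper and is presented \emph{without} proof --- the authors offer it explicitly as an open problem, so there is no proof of theirs to compare yours against; the only question is whether your proposal closes the gap, and it does not. Your preparatory reductions are sound: the $q\to1$, $n=p^r$ identification of the double sums matches the two supercongruences derived at the start of Section 6; the difference $D_q$ does satisfy $D_q\equiv R(q)=(q^{1-n}-q^{(n-1)^2/2})[n]^2\pmod{[n]\Phi_n(q)^2}$ by \eqref{eq:thm-a1} and \eqref{eq:thm-a2}; $R(1)=0$; and the observation that $D_q\equiv R(q)\pmod{[n]^2\Phi_n(q)^2}$ would suffice, together with the block factorization $[n]^2\Phi_n(q)^2=\Phi_{p^r}(q)^4\prod_{s=1}^{r-1}\Phi_{p^s}(q)^2$, is correct and is exactly the kind of one-extra-power-of-$[n]$ strengthening the authors themselves only \emph{conjecture} for the other families (their Conjectures 6.2 and 6.3). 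So your plan is well aligned with the paper's philosophy, but it reduces one open conjecture to strictly stronger open statements.

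The unproved steps are precisely where all the content lies. First, at the lower blocks $\Phi_{p^s}(q)^2$ with $s<r$, you must upgrade the root-of-unity vanishing behind \eqref{eq2.5} from first order to second order at \emph{non-primitive} roots of unity; nothing in Lemma \ref{Mohamed-Lemma1-2019} or the telescoping over divisors $d\mid n$ yields derivative information at such roots, and your remark that this ``stays within the method'' is unsubstantiated --- the creative microscoping arguments in the paper give only first-order control there. Second, at the top block you need $\Phi_{p^r}(q)^4$, one cyclotomic power beyond what the two-parameter Chinese-remainder scheme of Sections 3--4 attains (at best $[n]\Phi_n(q)^3$, as in \eqref{eq:thm-a3}), and you concede this would require new transformation refinements or a $q$-WZ certificate that you do not produce. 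Third, your ``softer route'' assumes the two residues modulo $[n]^2\Phi_n(q)^2$ differ by \emph{exactly} $R(q)$; by analogy with \eqref{eq:gw} and Conjecture 6.3 one should expect correction terms of the shape $\alpha(n)(1-q)^2[n]^4$ --- these vanish at $q=1$, so the reduction would survive them, but the claimed exactness is an additional unverified hypothesis. In short: a correct and thoughtful reduction framework, not a proof; the conjecture remains open both in the paper and in your proposal.
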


Numerical evaluation indicates that Theorem \ref{thm3} can be further strengthened as follows.

\begin{conjecture}The $q$-congruence \eqref{eq:thm-a3} holds modulo $[n]^2(1-aq^n)(a-q^n)$. In particular,
the $q$-congruence \eqref{eq:thm3} holds modulo $[n]^2\Phi_n(q)^2$.
\end{conjecture}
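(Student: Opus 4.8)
The plan is to keep the parameter $a$ and to split the target modulus $[n]^2(1-aq^n)(a-q^n)$ into two coprime pieces, recombining them by the Chinese remainder theorem exactly as in the proof of Theorem~\ref{thm-a3}. The factor $(1-aq^n)(a-q^n)$ is already fully under control: setting $b=1$ in \eqref{eq:chinese-a} collapses the inner sum to $1$, so for $a=q^{\pm n}$ El Bachraoui's vanishing (Lemma~\ref{Mohamed-Lemma1-2019}) turns the double sum into the \emph{exact} value $q^{(n-1)^2/2}[n]^2$; thus this branch is matched exactly and imposes no condition on the cyclotomic powers. Hence the entire new content of the conjecture is the single statement $\sum_{k=0}^{n-1}\sum_{j=0}^{k}c_q(j)c_q(k-j)\equiv 0\pmod{[n]^2}$, the right-hand side $q^{(n-1)^2/2}[n]^2$ already being $\equiv 0$ there. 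I would first record the simplifying observation that this is \emph{automatic for prime $n$}: then $[n]=\Phi_n(q)$, so $[n]^2=[n]\Phi_n(q)$ and the claim is precisely Theorem~\ref{thm-a3}. Writing $[n]^2=\prod_{1<d\mid n}\Phi_d(q)^2$ and noting that the factor $\Phi_n(q)^2$ is already supplied by Theorem~\ref{thm-a3}, the conjecture therefore reduces, and only for composite $n$, to showing $\Phi_d(q)^2\mid S$ for every \emph{proper} divisor $d>1$ of $n$, where $S$ denotes the double sum.

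For such a $d$, I would try to upgrade to second order the two facts underlying the power-one divisibility $\Phi_d(q)\mid S$ in the paper, both of which hold only \emph{at} a primitive $d$-th root of unity $\zeta$: the reflection $z_\zeta(k)=-z_\zeta((d-1)/2-k)$ coming from \eqref{eq:gs}, which together with Lemma~\ref{lemma:zero} annihilates the length-$d$ double sum; and the multiplicative relation $z_\zeta(\ell d+k)/z_\zeta(\ell d)=c_\zeta(k)$, which via Lemma~\ref{Mohamed-Lemma1-2019} propagates this vanishing across the whole range $0\le m\le n-1$. Reaching $\Phi_d(q)^2$ requires refinements of \emph{both} modulo $\Phi_d(q)^2$: a sharpening of \eqref{eq:gs} of the shape $z_q(k)+z_q((d-1)/2-k)\equiv\Phi_d(q)\,\epsilon_q(k)\pmod{\Phi_d(q)^2}$ with an explicit $q$-harmonic correction $\epsilon_q(k)$, and a first-order expansion of the ratio $z_q(\ell d+k)/z_q(\ell d)$ as $q$ moves off $\zeta$. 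Substituting the refined reflection into the convolution, the sign-pairing of Lemma~\ref{lemma:zero} no longer cancels term by term but leaves $\Phi_d(q)$ times a bilinear ``harmonic convolution,'' while the expansion of the factorization contributes further $\Phi_d(q)$-multiples.

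I expect the main obstacle to be the final step: proving that the \emph{sum} of all these first-order contributions is itself $\equiv 0\pmod{\Phi_d(q)}$. There is no visible reflection symmetry forcing this collapse, and making it vanish seems to demand genuinely new input — either a mod-$\Phi_d(q)^3$ strengthening of the single-sum $q$-supercongruence of \cite{Guo-2018} for this $c_q(k)$ (presently known only to modulus $[n]\Phi_n(q)^2$), or an exact two-parameter ($a,b$) closed form for $\sum_{j}z_q(j)z_q(k-j)$ in which $\Phi_d(q)^2$-divisibility is manifest before specialization, after which the Chinese remainder step would finish the argument just as in Theorem~\ref{thm-a3}. Producing that identity is, I believe, the real difficulty, and is presumably why the statement is left as a conjecture despite the supporting numerical evidence.
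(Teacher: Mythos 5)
You should note at the outset that the paper contains no proof of this statement: it is one of the open problems of Section 6, introduced only by the remark that ``numerical evaluation indicates'' it, so there is no proof of record to compare yours against, and your proposal must stand or fall as an attack on an open conjecture. Judged that way, your preliminary reductions are all correct. At $a=q^{\pm n}$ the factor $(q^{1-n};q^2)_k$ kills $c_q(k)$ for $(n+1)/2\leqslant k\leqslant n-1$, and \eqref{eq:false-1} with $b=1$ (only the $k=0$ term of its right-hand sum survives) gives $\sum_{k=0}^{n-1}c_q(k)=(-q)^{(n-1)^2/4}[n]$ exactly, so the first part of Lemma~\ref{Mohamed-Lemma1-2019} yields the exact value $q^{(n-1)^2/2}[n]^2$ for the double sum, settling the branch modulo $(1-aq^n)(a-q^n)$. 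Since the right-hand side of \eqref{eq:thm-a3} is divisible by $[n]^2$, the Chinese remainder theorem (the moduli being coprime, as in the paper) correctly reduces the conjecture to $[n]^2\mid S$; the factor $\Phi_n(q)^2$ is already contained in the modulus $[n]\Phi_n(q)(1-aq^n)(a-q^n)$ of Theorem~\ref{thm-a3}; and for prime $n$ one has $[n]=\Phi_n(q)$, so the conjecture coincides with Theorem~\ref{thm-a3}. Hence the open content is exactly $\Phi_d(q)^2\mid S$ for every proper divisor $d>1$ of composite $n$ --- a sharper localization of the problem than the paper itself records, and genuinely useful.

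The gap is that the proposal stops precisely where a proof would have to begin, as you yourself concede. The mod-$\Phi_d(q)^2$ refinement of \eqref{eq:gs} with its $q$-harmonic correction $\epsilon_q(k)$ is postulated rather than derived; the first-order expansion of $z_q(\ell d+k)/z_q(\ell d)$ off a primitive $d$-th root of unity $\zeta$ is not computed; and, most importantly, you exhibit no mechanism --- symmetry, summation formula, or two-parameter closed form --- forcing the accumulated first-order contributions to vanish modulo $\Phi_d(q)$. The structural obstacle is visible already in the paper's power-one argument: there the length-$n$ double sum factors \emph{at} $q=\zeta$ as a product in which the length-$d$ block $B$ satisfies $B(\zeta)=0$, so that at first order one is left with a term of the shape $A(\zeta)B'(\zeta)$, and nothing in Lemma~\ref{lemma:zero}, \eqref{eq:gs}, or Lemma~\ref{Mohamed-Lemma1-2019} controls $B'(\zeta)$; indeed the factorization itself is only valid at the root of unity, not as a congruence modulo $\Phi_d(q)^2$. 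So what you have is a correct reduction together with an honest research plan, and the statement remains unproved --- consistent with its status as a conjecture in the paper.
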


We believe that the following generalization of Theorem \ref{thm4} should be true.

\begin{conjecture}
Let $c_q(k)$ be given in Theorem \ref{thm4}. Then
\begin{equation*}
\sum_{k=0}^{n-1}\sum_{j=0}^{k}c_q(j)c_q(k-j)
\equiv q^{1-n}[n]^2+\frac{(n^2-1)(1-q)^2}{12}q[n]^4 \pmod{[n]^2\Phi_n(q)^3}.
\end{equation*}
\end{conjecture}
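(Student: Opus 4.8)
The plan is to run the two-parameter creative-microscoping scheme already used for Theorem \ref{thm4} (via Theorem \ref{thm-a4}), but to feed it a refinement of the Guo--Wang single-sum supercongruence \eqref{eq:gw} that is accurate enough for the correction term $\frac{(n^2-1)(1-q)^2}{12}q[n]^4$ to survive. The guiding observation is that whenever El Bachraoui's Lemma \ref{Mohamed-Lemma1-2019} applies (i.e.\ the summand vanishes on the upper half of the range), the double sum equals the square of the single sum; squaring the right-hand side of \eqref{eq:gw} gives
\begin{align*}
\left(q^{(1-n)/2}[n]+\frac{(n^2-1)(1-q)^2}{24}q^{(1-n)/2}[n]^3\right)^2
\equiv q^{1-n}[n]^2+\frac{(n^2-1)(1-q)^2}{12}q[n]^4 \pmod{[n]^2\Phi_n(q)^3},
\end{align*}
where I have used that $[n]^6\equiv 0$ and $q^{1-n}[n]^4\equiv q[n]^4$ modulo $[n]^2\Phi_n(q)^3$ (the latter because $q^{1-n}-q=q^{1-n}(1-q)[n]$, so $(q^{1-n}-q)[n]^4=q^{1-n}(1-q)[n]^5$). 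Thus the precise shape of the conjectured right-hand side is \emph{forced} by \eqref{eq:gw}, and the problem reduces to propagating that precision through the microscoping machinery.

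Concretely, I would retain the parametric summand
$$
z_q(k)=[4k+1]\frac{(aq,q/a,q/b,q;q^2)_k}{(aq^2,q^2/a,bq^2,q^2;q^2)_k}b^k
$$
of Theorem \ref{thm-a4} and re-run the antisymmetry argument of Lemma \ref{lemma:zero} to keep the explicit factor $\Phi_n(q)$, together with the El Bachraoui identity at $a=q^{\pm n}$ turning the double sum into the square of $\sum_{k=0}^{(n-1)/2}z_q(k)$. The decisive new input is a strengthening of Lemmas \ref{th:4.2} and \ref{lem:2}: in place of the leading-order Jackson $_6\phi_5$ evaluations, I would derive from Watson's $_8\phi_7$ transformation \cite[Appendix (III.18)]{GR} --- the same tool behind \eqref{eq:gw} --- a parametric single-sum congruence that carries the $\frac{(n^2-1)(1-q)^2}{24}$ correction with the indeterminates $a,b$ still present, valid modulo $[n]\Phi_n(q)^2(1-aq^n)(a-q^n)$, together with a companion modulo $b-q^n$. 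Squaring these via Lemma \ref{Mohamed-Lemma1-2019} would yield refinements of \eqref{eq:chinese-a2} and \eqref{eq:chinese-b2} accurate to one higher power of both $\Phi_n(q)$ and $[n]$.

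I would then combine the two refined congruences by the Chinese remainder theorem for coprime polynomials exactly as in \eqref{eq:abqn-2nd}, using \eqref{eq:ab-1} and \eqref{eq:ab-2}, set $b=1$ and $a=1$, and simplify with \eqref{eq:relation}. At $a=1$ the factor $(1-aq^n)(a-q^n)$ becomes $(1-q^n)^2$, contributing $\Phi_n(q)^2$; stacked on the explicit $\Phi_n(q)$ from the antisymmetry step and an extra $\Phi_n(q)^2$ from the refined single-sum evaluation, this should deliver $\Phi_n(q)^5$, i.e.\ the $\Phi_n(q)^3$ beyond $[n]^2$. In parallel, upgrading the "modulo $[n]$" step of Theorem \ref{thm-a4} to "modulo $[n]^2$" requires that the parametric double-sum congruence itself hold to one higher power of $[n]$, which is precisely the strengthening flagged in Conjecture 2 for Theorem \ref{thm3}.

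The hard part will be establishing the parametric, correction-carrying single-sum congruence modulo $[n]\Phi_n(q)^2(1-aq^n)(a-q^n)$: the current Lemmas \ref{th:4.2} and \ref{lem:2} are one order too coarse to detect the $\frac{(n^2-1)(1-q)^2}{24}$ term, so one must redo the Watson-$_8\phi_7$ analysis underlying \eqref{eq:gw} while keeping $a$ and $b$ free --- a considerably more delicate computation, since in \eqref{eq:gw} the correction itself emerged only after an L'H\^ospital-type limit in $a$. Coupled with this is the need to prove the $[n]\to[n]^2$ upgrade (the phenomenon of Conjecture 2), which is itself open. I expect the single-sum refinement to be the true bottleneck; once it is in hand, the CRT assembly and the specializations $b=1$, $a=1$ should be routine.
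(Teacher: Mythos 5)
There is nothing in the paper for your argument to be compared against: the statement you were given is not one of the paper's theorems but its Conjecture 6.3, listed in Section 6 among the open problems, and the authors explicitly remark there that $q$-congruences modulo the fifth power of a cyclotomic polynomial (the modulus $[n]^2\Phi_n(q)^3$ contains $\Phi_n(q)^5$) are rare and in general rather difficult to prove. Read as a proof, your proposal therefore has to stand on its own, and it does not: as you yourself concede, both of its essential inputs are unestablished, and each is genuinely open. The upgrade of the ``modulo $[n]$'' step of Theorem \ref{thm-a4} (i.e.\ of the congruence \eqref{eq:thm-a400}) to ``modulo $[n]^2$'' is precisely the phenomenon the authors isolate in their Conjecture 6.2 for \eqref{eq:thm-a3}, and it is unresolved; the root-of-unity analysis via Lemma \ref{Mohamed-Lemma1-2019} yields one power of $[n]$ only, and no mechanism for the second power is offered.

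The other input --- a parametric single-sum congruence ``carrying'' the $\frac{(n^2-1)(1-q)^2}{24}$ term modulo $[n]\Phi_n(q)^2(1-aq^n)(a-q^n)$ --- is also structurally misconceived as stated. At $a=q^{\pm n}$ the sums in Lemmas \ref{th:4.2} and \ref{lem:2} are \emph{exact} Jackson-type evaluations (this exactness is what feeds El Bachraoui's lemma), so there is no error term at the specializations for a refinement to detect; in \eqref{eq:gw}, and equally in the first case of Theorem \ref{thm6}, the correction term is created only at the specialization stage, as the L'H\^ospital limit of $\frac{(1-aq^n)(a-q^n)}{(1-a)^2}$ times a quantity vanishing doubly at $a=1$, after the CRT denominators $(a-b)(1-ab)$ degenerate at $b=1$, $a\to1$. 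The viable route is thus not to refine the two lemmas but to keep \eqref{eq:abqn-2nd} as it stands, set $b=1$, and extract the second-order term as $a\to1$ exactly as in the proof of Theorem \ref{thm6}; yet even this yields the conjectured right-hand side modulo at best $[n]\Phi_n(q)^3$ --- in effect Theorem \ref{thm4} plus a candidate correction --- leaving the missing power of $[n]$ exactly where Conjecture 6.2 says it is. Finally, your motivating computation (squaring the right-hand side of \eqref{eq:gw}) is algebraically correct but only heuristic: at $a=1$ the summand $c_q(k)$ of Theorem \ref{thm4} does not vanish for $(n+1)/2\leqslant k\leqslant n-1$ (it is merely divisible by $(1-q^n)^4$), so the double sum differs from the square of the single sum by terms divisible by $[n]^4$ but not necessarily by $[n]^2\Phi_n(q)^3$; the squaring argument thus pins down the shape of the right-hand side one power of $\Phi_n(q)$ short of the conjectured modulus. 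In short, your reduction correctly locates the two bottlenecks, but both remain open, so the statement is not proved --- consistent with its status in the paper as a conjecture.
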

Note that $q$-congruences modulo the fifth power of a cyclotomic polynomial is rare and is in general rather
difficult to prove. Another such an unsolved $q$-congruence can be found in \cite[Conjecture 6.4]{Guo-mod4}.

We have two conjectural generalizations of Theorem \ref{thm5}.
\begin{conjecture}The $q$-congruence \eqref{eq:thm5-2} holds modulo $\Phi_n(q)(1-aq^n)(a-q^n)$. In particular,
the $q$-congruence \eqref{eq:thm5-1} holds modulo $\Phi_n(q)^3$.
\end{conjecture}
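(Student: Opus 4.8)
The plan is to upgrade the single parameter used in the proof of Theorem~\ref{thm5} to \emph{two} parameters, exactly as in the passage from the modulus $(1-aq^n)(a-q^n)$ to the $\Phi_n(q)^3$ results of Theorems~\ref{thm3}, \ref{thm4} and the $n\equiv1\pmod4$ case of Theorem~\ref{thm6}. The point is that the parameter $a$ in \eqref{eq:more-1} only manufactures the factor $(1-aq^n)(a-q^n)$, which degenerates to $\Phi_n(q)^2$ at $a=1$; a second parameter $b$ is what produces the missing third power of $\Phi_n(q)$. A convenient first move is to rewrite the weight as a $q$-shifted factorial, namely $\dfrac{2q^{2k}}{1+q^{2k}}=\dfrac{(-1;q^2)_k}{(-q^2;q^2)_k}q^{2k}$, so that the summand in Theorem~\ref{thm5} becomes the basic hypergeometric term $\dfrac{(q,q,-1;q^2)_k}{(q^2,q^2,-q^2;q^2)_k}q^{2k}$ and its $a$-deformation, whose partial sum is \eqref{eq:more-1}, becomes $\dfrac{(aq,q/a,-1;q^2)_k}{(q^2,q^2,-q^2;q^2)_k}q^{2k}$. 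In this form \eqref{eq:more-1} is visibly the creative-microscoping specialization $a=q^{\pm n}$ of a basic hypergeometric summation, and the task is to locate a $b$-refinement $z_q(k)$ of it with $z_q(k)\big|_{b=1}$ equal to the $a$-deformed summand above, in the spirit of the Andrews and Jain $q$-Whipple formulas employed for Theorem~\ref{thm6}.

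Granting such a $z_q(k)$, I would follow the template of Theorem~\ref{thm-a3} line by line. First I would record two closed-form evaluations of the truncated single sum $\sum_{k=0}^{(n-1)/2}z_q(k)$: one valid modulo $(1-aq^n)(a-q^n)$ (the $a=q^{\pm n}$ specialization, as in \eqref{eq:more-1}) and one valid modulo $b-q^n$ (the $b=q^n$ specialization, where the $q$-series terminates early). Since $z_q(k)=0$ in the range $(n+1)/2\leqslant k\leqslant n-1$ at each of these specializations, Lemma~\ref{Mohamed-Lemma1-2019} converts both evaluations into evaluations of the double sum $\sum_{k=0}^{n-1}\sum_{j=0}^{k}z_q(j)z_q(k-j)$, giving one congruence modulo $(1-aq^n)(a-q^n)$ and one modulo $b-q^n$. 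The Chinese remainder theorem for coprime polynomials, in the concrete shape of \eqref{eq:ab-1} and \eqref{eq:ab-2}, then fuses them into a single congruence modulo $(1-aq^n)(a-q^n)(b-q^n)$. Finally I would put $b=1$; since $b-q^n=1-q^n$ carries the factor $\Phi_n(q)$, this yields the desired strengthening of \eqref{eq:thm5-2} modulo $\Phi_n(q)(1-aq^n)(a-q^n)$, and then $a=1$ collapses $(1-aq^n)(a-q^n)$ to $(1-q^n)^2$ and delivers \eqref{eq:thm5-1} modulo $\Phi_n(q)^3$, with the simplification \eqref{eq:relation} used to tidy the $a$-dependence.

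The main obstacle is the analytic input rather than the bookkeeping: I must identify the correct two-parameter summand and prove the two summation congruences, and---more delicately---verify that after the CRT step and $b=1$ the right-hand side collapses to exactly $1$ rather than to $1$ plus a correction term. This is genuinely different from the $n\equiv1\pmod4$ case of Theorem~\ref{thm6}, where the two closed forms disagree modulo $\Phi_n(q)$ and a nonzero term $4[n]^2\sum_k q^{4k-2}/[4k-2]^2$ survives; here the Rodriguez-Villegas value is the bare sign $(-1)^{(n-1)/2}$, so I would expect the two evaluations to agree modulo $\Phi_n(q)$ and the correction to vanish, but establishing this agreement is precisely the crux. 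I note finally that the shortcut used for the other $\Phi_n(q)^3$ theorems via Lemma~\ref{lemma:zero} is unavailable here, since the antisymmetry $z_q(k)\equiv -z_q((n-1)/2-k)\pmod{\Phi_n(q)}$ fails---exactly because $\sum_{k=0}^{(n-1)/2}z_q(k)\equiv(-1)^{(n-1)/2}\not\equiv 0\pmod{\Phi_n(q)}$---so the second parameter $b$ is genuinely needed.
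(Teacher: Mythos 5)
You should first be aware that the statement you were given is not a theorem of the paper: it is one of the open problems posed in Section~6, and the paper contains no proof of it. The authors themselves signal the obstruction in the remark following their more general conjecture on \eqref{eq:conj6-5}: using the known parametric evaluation \eqref{eq:conj6-6} they ``can only prove'' the congruence modulo a single linear factor, i.e.\ precisely the modulus $(1-aq^n)(a-q^n)$ of \eqref{eq:thm5-2}, not the conjectured extra factor of $\Phi_n(q)$. So there is no paper proof for your argument to be measured against, and, to your credit, what you wrote is honestly presented as a program rather than a proof.

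As a program it is the natural one and correctly mirrors the paper's two-parameter Chinese-remainder machinery (Theorems~\ref{thm-a3} and \ref{thm-a4}, and the $n\equiv1\pmod4$ case of Theorem~\ref{thm6}): your rewriting $2q^{2k}/(1+q^{2k})=(-1;q^2)_k\,q^{2k}/(-q^2;q^2)_k$ is correct, the CRT bookkeeping via Lemma~\ref{Mohamed-Lemma1-2019}, \eqref{eq:ab-1}, \eqref{eq:ab-2} and \eqref{eq:relation} is routine, and your closing observation that the Lemma~\ref{lemma:zero} shortcut is unavailable here (because the single sum is $\equiv(-1)^{(n-1)/2}\not\equiv0\pmod{\Phi_n(q)}$) is a sound diagnosis. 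But the genuine gap is exactly what you defer as ``analytic input'': you never exhibit the $b$-refined summand $z_q(k)$, never prove the two required evaluations of $\sum_{k=0}^{(n-1)/2}z_q(k)$ modulo $(1-aq^n)(a-q^n)$ and modulo $b-q^n$ (which would need a Watson-, Jackson- or $q$-Whipple-type summation behind a $b$-deformation of \eqref{eq:more-1}, with $(q/b;q^2)_k$-type factors ensuring the vanishing needed for Lemma~\ref{Mohamed-Lemma1-2019} at $b=q^n$), and never establish that after setting $b=1$ the correction term vanishes so that the right-hand side collapses to $1$. That last expectation is itself unproven, and Theorem~\ref{thm6} shows it can fail: there the two closed forms disagree modulo $\Phi_n(q)$ and a nonzero term $4[n]^2\sum_k q^{4k-2}/[4k-2]^2$ survives. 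Since every nontrivial ingredient of your plan is left unconstructed, the statement remains, as in the paper, a conjecture.
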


\begin{conjecture}
Let $d$, $n$ and $r$ be positive integers with $\gcd(d,n)=1$ and $n$ odd.  For $k\geqslant 0$, let
$$
c_q(k)=\frac{2(q^r;q^d)_k (q^{d-r};q^d)_k q^{2dk}}{(q^d;q^d)_k^2(1+q^{dk})}.
$$
Then
\begin{equation}
\sum_{k=0}^{n-1}\sum_{j=0}^{k}c_q(j)c_q(k-j)\equiv 1\pmod{\Phi_n(q)^2}. \label{eq:conj6-5}
\end{equation}
\end{conjecture}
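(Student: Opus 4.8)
The plan is to mimic the proof of Theorem \ref{thm5}, lifting the statement to a parametric $q$-congruence to which the creative microscoping method and El Bachraoui's Lemma \ref{Mohamed-Lemma1-2019} apply. First I would introduce the extra parameter $a$ by setting
$$
z_q(k)=\frac{2(aq^r,q^{d-r}/a;q^d)_k\,q^{2dk}}{(q^d;q^d)_k^2(1+q^{dk})},
$$
which specializes to $c_q(k)$ at $a=1$ and reduces to the summand of Theorem \ref{thm5} when $d=2$ and $r=1$. The key input needed is a parametric single-sum evaluation of the shape
$$
\sum_{k=0}^{n-1}z_q(k)\equiv\varepsilon\pmod{(1-aq^n)(a-q^n)},
$$
with $\varepsilon=\pm1$ a sign depending only on $d,r,n$; for $d=2$, $r=1$ this is precisely \cite[Corollary 1.4]{Guo-para} with $\varepsilon=(-1)^{(n-1)/2}$. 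Such an evaluation should be obtainable either from a suitable very-well-poised $q$-summation or directly by the creative microscoping technique of \cite{GuoZu}.

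Granting this, the second step passes from the single sum to the convolution sum. Specializing $a=q^{n}$ (resp. $a=q^{-n}$), I would verify that $z_q(k)$ vanishes for $(n+1)/2\leqslant k\leqslant n-1$, so that El Bachraoui's Lemma \ref{Mohamed-Lemma1-2019} gives
$$
\sum_{k=0}^{n-1}\sum_{j=0}^{k}z_q(j)z_q(k-j)=\Bigg(\sum_{k=0}^{n-1}z_q(k)\Bigg)^2=\varepsilon^2=1
$$
at that specialization. Since $1-aq^n$ and $a-q^n$ are coprime, the Chinese remainder theorem for polynomials upgrades this to a congruence modulo $(1-aq^n)(a-q^n)$. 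Finally, putting $a=1$ and using $\Phi_n(q)^2\mid(1-q^n)^2$ yields \eqref{eq:conj6-5} modulo $\Phi_n(q)^2$.

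The main obstacle is the term-vanishing demanded by the second step. At $a=q^{n}$ (resp. $a=q^{-n}$) the only numerator factor of $z_q(k)$ that can vanish identically in $q$ is $(q^{d-r-n};q^d)_k$ (resp. $(q^{r-n};q^d)_k$), which forces $d\mid(n+r)$ (resp. $d\mid(n-r)$). Requiring both, one gets $d\mid 2n$ and $d\mid 2r$; since $\gcd(d,n)=1$ the former gives $d\mid 2$. Thus the creative-microscoping-plus-convolution argument in its present form is confined to $d\in\{1,2\}$ (and for $d=2$ it further needs $r$ odd, exactly the situation of Theorem \ref{thm5}). For general $d$ and $r$, El Bachraoui's lemma therefore cannot be applied verbatim, and I would expect a complete proof to require either a refined convolution identity valid without the vanishing hypothesis, or a reparametrization of $z_q(k)$ enforcing natural truncation. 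Establishing the parametric single-sum congruence uniformly in $d$ and $r$, together with this structural difficulty, is likely the reason the statement is left as a conjecture.
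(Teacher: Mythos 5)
First, note that this statement is one of the paper's open problems (a conjecture in Section 6), so there is no proof in the paper to compare against: the authors themselves only remark, after the statement, that using \eqref{eq:conj6-6} they ``can only prove the modulus $(1-aq^{r+d\langle-r/d\rangle_n})$ or $(a-q^{d-r+d\langle (r-d)/d\rangle_n})$ case.'' Your bottom line --- that the Theorem \ref{thm5} machinery does not yield the full congruence and that this is why the statement is left open --- is therefore consistent with the paper. Your first two steps (parametrize with $a$, invoke El Bachraoui's Lemma \ref{Mohamed-Lemma1-2019} at the specializations where the sum terminates, then use coprimality of the linear factors and set $a=1$) are exactly the template of Theorem \ref{thm5}.

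However, your diagnosis of \emph{where} the method stalls is not the right one, and it undersells what the method actually achieves. You anchor the specializations at $a=q^{\pm n}$ and deduce that term-vanishing forces $d\mid 2$. But the correct parametric input, \cite[Corollary 1.4]{Guo-para} as quoted in \eqref{eq:conj6-6}, is a congruence modulo $(1-aq^{r+d\langle-r/d\rangle_n})(a-q^{d-r+d\langle (r-d)/d\rangle_n})$; the relevant specializations are the shifted points $a=q^{-r-d\langle-r/d\rangle_n}$ and $a=q^{d-r+d\langle(r-d)/d\rangle_n}$, at which $(aq^r;q^d)_k$ (resp.\ $(q^{d-r}/a;q^d)_k$) does vanish identically for $k>\langle-r/d\rangle_n$ (resp.\ $k>\langle(r-d)/d\rangle_n$), for \emph{every} $d$ and $r$. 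Since $\langle-r/d\rangle_n+\langle(r-d)/d\rangle_n=n-1$, at least one of the two cutoffs is at most $(n-1)/2$, so Lemma \ref{Mohamed-Lemma1-2019} applies at that specialization and the convolution congruence follows modulo that linear factor; as $n$ divides $r+d\langle-r/d\rangle_n$, putting $a=1$ already gives \eqref{eq:conj6-5} modulo $\Phi_n(q)$ for all $d,r$ --- not merely for $d\in\{1,2\}$ as your analysis claims. The genuine obstruction is that the \emph{other} specialization generally has its cutoff beyond $(n-1)/2$, so the vanishing hypothesis of Lemma \ref{Mohamed-Lemma1-2019} fails there and only one of the two coprime factors is accessible; this is precisely the gap between the provable $\Phi_n(q)$ and the conjectured $\Phi_n(q)^2$, and it matches the authors' remark verbatim. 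A small further point: your $z_q(k)$ carries $q^{2dk}$ as in the conjecture's $c_q(k)$, yet you assert it reduces to the summand of Theorem \ref{thm5} at $d=2$, $r=1$; that summand (and \eqref{eq:conj6-6}) carries $q^{dk}$, so the reduction fails as stated --- there is an exponent discrepancy between \eqref{eq:conj6-5} and \eqref{eq:conj6-6} that any serious attempt on this conjecture must first resolve.
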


Note that the first author \cite[Corollary 1.4]{Guo-para} proved the following congruence:
modulo $(1-aq^{r+d\langle-r/d\rangle_n})(a-q^{d-r+d\langle (r-d)/d\rangle_n})$,
\begin{equation}
\sum_{k=0}^{n-1}\frac{2(aq^r;q^d)_k (q^{d-r}/a;q^d)_k
q^{dk}}{(q^d;q^d)_{k}(q^d;q^d)_{k} (1+q^{dk})} \equiv (-1)^{\langle-r/d\rangle_n},  \label{eq:conj6-6}
\end{equation}
where  $\langle x\rangle_m$ denotes the {\it least non-negative residue} of $x$ modulo $m$. It seems that the corresponding parametric generalization of
\eqref{eq:conj6-5} is true modulo $\Phi_n(q)(1-aq^{r+d\langle-r/d\rangle_n})$ if $\langle-r/d\rangle_n\leqslant (n-1)/2$ or
is true modulo $\Phi_n(q)(a-q^{d-r+d\langle (r-d)/d\rangle_n})$ if $\langle(r-d)/d\rangle_n\leqslant (n-1)/2$.
Using \eqref{eq:conj6-6}, we can only prove the modulus $(1-aq^{r+d\langle-r/d\rangle_n})$ or $(a-q^{d-r+d\langle (r-d)/d\rangle_n})$ case.

We have the following generalization of \eqref{eq:h-cases} for the second case.
\begin{conjecture} Let $p\equiv 3\pmod{4}$ be a prime greater than $3$ and $r$ a positive integer. Then
$$
\sum_{k=0}^{p^r-1}\frac1{64^{k}}\sum_{j=0}^{k} {2j\choose j}^3{2k-2j\choose k-j}^3
\equiv 0\pmod{p^4}.
$$
\end{conjecture}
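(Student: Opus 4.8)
The plan is to recognise the left-hand side as the $q\to1$, $n=p^r$ specialisation of the double sum in Theorem~\ref{thm6} and then to push that theorem up by one cyclotomic power. First I would check that the summand $c_q(k)=(q;q^2)_k^2(q^2;q^4)_k q^{2k}/\big[(q^2;q^2)_k^2(q^4;q^4)_k\big]$ of Theorem~\ref{thm6} satisfies $\lim_{q\to1}c_q(k)=\binom{2k}{k}^3/64^k$, using $\lim_{q\to1}(q^a;q^d)_k/(1-q)^k=d^k(a/d)_k$ on each factor. Hence the conjecture is exactly the $q=1$, $n=p^r$ instance of \eqref{eq:thm6-1}, and since $\Phi_{p^r}(1)=p$ while every denominator that occurs is a power of $2$, Theorem~\ref{thm6} already delivers the congruence modulo $p^3$ (this is \eqref{eq:h-cases} at $r=1$). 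The whole problem is to manufacture one extra factor of $p$.

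Because $p\equiv3\pmod4$, the parity of $r$ fixes $p^r\bmod4$: $r$ odd gives $p^r\equiv3\pmod4$ and $r$ even gives $p^r\equiv1\pmod4$, so I would treat the two branches of \eqref{eq:thm6-1} separately, aiming in each case to prove the double sum congruent to the corresponding right-hand side of \eqref{eq:thm6-1} modulo $\Phi_n(q)^4$ rather than only $\Phi_n(q)^3$. Granting this, the specialisation is short. For $r$ odd the right-hand side is $0$, so modulo $\Phi_n(q)^4$ we get $0$ and hence $\equiv0\pmod{p^4}$ at $q=1$. For $r$ even the right-hand side is nonzero, but its leading factor satisfies $\lim_{q\to1}(q^2;q^4)_{(n-1)/4}^4/(q^4;q^4)_{(n-1)/4}^4=\big(\binom{(p^r-1)/2}{(p^r-1)/4}\big/4^{(p^r-1)/4}\big)^4$, whose $p$-adic valuation is $4\,v_p\binom{(p^r-1)/2}{(p^r-1)/4}=4\cdot(r/2)=2r\ge4$; the value $v_p\binom{(p^r-1)/2}{(p^r-1)/4}=r/2$ follows from Kummer's theorem, since writing $p=4t+3$ the base-$p$ digits of $(p^r-1)/4$ run periodically as $3t+2,t,3t+2,t,\dots$, and adding this integer to itself carries exactly once in each even digit position. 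One checks separately that the bracket $1+4[n]^2\sum_{k=1}^{(n-1)/4}q^{4k-2}/[4k-2]^2$ is a $p$-adic unit at $q=1$, because $v_p(4k-2)\le r-1$ forces each term of $4n^2\sum 1/(4k-2)^2$ to have valuation $\ge2$. Thus the mod-$\Phi_n(q)^4$ statement collapses to $\equiv0\pmod{p^4}$ in both branches.

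The hard part is the sharpening from $\Phi_n(q)^3$ to $\Phi_n(q)^4$. In the proof of Theorem~\ref{thm6} the accessible $\Phi_n(q)$-power is built from the squaring device of Lemma~\ref{Mohamed-Lemma1-2019} at $a=q^{\pm n}$ together with the Andrews--Jain evaluations (contributing $\Phi_n(q)^2$ once $a=1$), and from the first-order vanishing $\sum_k\sum_j z_q(j)z_q(k-j)\equiv0\pmod{\Phi_n(q)}$ read off from the antisymmetry \eqref{eq:zqk-1}--\eqref{eq:zqk-2} via Lemma~\ref{lemma:zero}. I would try to upgrade the latter to $\pmod{\Phi_n(q)^2}$ by expanding \eqref{eq:gs} to second order, writing $z_q(k)+z_q((n-1)/2-k)\equiv\Phi_n(q)\,w_q(k)\pmod{\Phi_n(q)^2}$ and running a second-order version of the pairing in Lemma~\ref{lemma:zero}; fed through the Chinese-remainder assembly \eqref{eq:abqn-h} this should raise the modulus to $\Phi_n(q)^4$, exactly as the mod-$\Phi_n(q)^3$ single-sum refinement \eqref{eq:h-1} is obtained from its mod-$\Phi_n(q)^2$ predecessor. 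I expect two genuine difficulties. First, tracking the $q$-harmonic corrections $w_q(k)$ and verifying that the cross terms of the convolution still cancel modulo $\Phi_n(q)^2$ is heavy bookkeeping, and these are fourth-power cyclotomic $q$-congruences, which the paper itself flags as hard. Second, and more fundamentally, the sharpening cannot hold uniformly in $n$: at $n=3$ the double sum at $q=1$ equals $351/256=3^3\cdot13/2^8$, of $3$-adic valuation exactly $3$, so the extra factor of $p$ is a genuine Wolstenholme-type phenomenon available only for $p>3$. The proof must therefore inject the hypothesis $p>3$ into the valuation of the $q$-harmonic corrections, and this is precisely the delicate point I would expect to be the main obstacle.
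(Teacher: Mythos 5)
This statement is one of the paper's open problems: the authors state it as a conjecture (a strengthening of the second case of \eqref{eq:h-cases} from $p^3$ to $p^4$) and offer no proof, explicitly remarking that $q$-congruences modulo high powers of cyclotomic polynomials are in general rather difficult. So there is no paper proof to measure you against, and the real question is whether your proposal closes the gap on its own. It does not. Everything you actually verify is the reduction: the identification $\lim_{q\to1}c_q(k)=\binom{2k}{k}^3/64^k$ is correct, the parity analysis of $p^r\bmod 4$ is correct, the Kummer computation $v_p\binom{(p^r-1)/2}{(p^r-1)/4}=r/2$ (digits $3t+2,t,3t+2,t,\dots$ with a carry in each even position) is correct, and the bracket being a $p$-adic unit follows as you say from $v_p(4k-2)\leqslant r-1$. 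But all of this only shows that a sharpened form of Theorem \ref{thm6} modulo $\Phi_n(q)^4$ \emph{would imply} the conjecture; the sharpening itself, which is the entire content, is left as ``I would try to upgrade \eqref{eq:gs} to second order,'' with no actual second-order expansion, no identification of the correction terms $w_q(k)$, and no cancellation argument. That is a plan, not a proof.

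Worse, your own numerics show the plan as stated is inconsistent: at $n=3$ the double sum at $q=1$ equals $351/256=3^3\cdot 13/2^8$, so the uniform polynomial congruence ``LHS $\equiv 0\pmod{\Phi_n(q)^4}$ for all $n\equiv 3\pmod 4$'' is false. Yet the entire machinery you propose to reuse --- the antisymmetry \eqref{eq:zqk-1}--\eqref{eq:zqk-2}, Lemma \ref{lemma:zero}, Lemma \ref{Mohamed-Lemma1-2019}, and the Chinese-remainder assembly \eqref{eq:abqn-h} --- treats all odd $n$ uniformly and has no mechanism to see the hypothesis $p>3$. You acknowledge this (``the sharpening cannot hold uniformly in $n$''), but acknowledging the obstruction is not the same as overcoming it: some genuinely new, Wolstenholme-type input is needed, and none is supplied. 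A secondary technical gap: in the $r$ even branch your specialization $q\to 1$ of a mod-$\Phi_n(q)^4$ congruence needs the difference of the two sides to have denominator prime to $p$ at $q=1$, but the right-hand side of \eqref{eq:thm6-1} contains $[n]^2/[4k-2]^2$ with $[4k-2]$ possibly divisible by $\Phi_{p^j}(q)$ for $j\leqslant r-1$, so ``every denominator is a power of $2$'' is false for the assembled congruence and an extra argument is required there as well. In short: correct and careful reductions, but the conjecture remains exactly as open after your proposal as it is in the paper.
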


Finally, the following stronger versions of  Theorems \ref{thm7} and \ref{thm8} appear to be true.
\begin{conjecture}
Let $c_q(k)$ be given in Theorem \ref{thm7}. Then
\begin{align*}
\sum_{k=0}^{n-1}\sum_{j=0}^{k}c_q(j)c_q(k-j)
\equiv \begin{cases}
[n]^2\dfrac{(q^2;q^4)_{(n-1)/4}^4}{(q^4;q^4)_{(n-1)/4}^4} \pmod{[n]^2\Phi_n(q)^2}  &\text{if}\; n\equiv1\pmod4, \\[12pt]
0\pmod{[n]^2\Phi_n(q)^4}  &\text{if}\; n\equiv3\pmod4.
\end{cases}
\end{align*}
\end{conjecture}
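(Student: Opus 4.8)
The plan is to strengthen the proof of Theorem~\ref{thm7} by running the full two-parameter creative-microscoping argument together with the Chinese remainder theorem for coprime polynomials, in exactly the style of the proofs of Theorems~\ref{thm3}, \ref{thm4} and \ref{thm6}. First I would introduce a doubly parametric summand of the form
$$
z_q(k)=(-1)^k[4k+1]\frac{(aq,q/a,q/b,q;q^2)_k(q^2;q^4)_k}{(aq^2,q^2/a,bq^2,q^2;q^2)_k(q^4;q^4)_k}b^kq^k,
$$
so that setting $a=b=1$ recovers the summand $c_q(k)$ of Theorem~\ref{thm7}, and so that the specializations $a=q^{\pm n}$ and $b=q^n$ each admit a closed-form evaluation from the $q$-Whipple summations of Andrews and Jain already invoked in the proof of Theorem~\ref{thm6}. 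The structural point is that at $a=q^{\pm n}$ (and at $b=q^n$) the upper half of the terms vanishes, so Lemma~\ref{Mohamed-Lemma1-2019} converts the double sum into the \emph{square} of a single sum. This squaring is the reason the target modulus is $[n]^2\Phi_n(q)^2$ for $n\equiv1$ and $[n]^2\Phi_n(q)^4$ for $n\equiv3\pmod4$ rather than $[n]\Phi_n(q)^2$: the known single-sum evaluation is divisible by $[n]$ when $n\equiv1$ and by $[n]\Phi_n(q)^2$ when $n\equiv3$, and squaring doubles each factor. For $n\equiv3$, where the right-hand side is $0$, a single-parameter version $x_q(k)$ (carrying only $a$, as with the $x_q$ of Theorem~\ref{thm6}) should suffice, since one only needs vanishing.

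The core then splits into three congruences for the parametric double sum $\sum_{k=0}^{n-1}\sum_{j=0}^{k}z_q(j)z_q(k-j)$: one modulo $(1-aq^n)(a-q^n)$ and one modulo $b-q^n$, each obtained by squaring the corresponding single-sum evaluation via Lemma~\ref{Mohamed-Lemma1-2019}; and one modulo a power of $\Phi_n(q)$, obtained from the antisymmetry relations \eqref{eq:zqk-1} and \eqref{eq:zqk-2} together with Lemma~\ref{lemma:zero}. These three would be glued by the Chinese remainder theorem via \eqref{eq:ab-1} and \eqref{eq:ab-2}, yielding a single congruence modulo $[n]\Phi_n(q)^{?}(1-aq^n)(a-q^n)(b-q^n)$. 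Finally I would set $b=1$ and let $a\to1$, using \eqref{eq:relation} and L'H\^ospital's rule exactly as in the passage to \eqref{eq:abqn-h-2}; the factor $(1-q^n)^2$ produced as $a\to1$ multiplies the cyclotomic power already present, which is what promotes $\Phi_n(q)^2$ to $\Phi_n(q)^4$ in the $n\equiv3$ case and, through the squared single-sum values together with the roots-of-unity argument, is what one hopes promotes $[n]$ to $[n]^2$.

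The hard part will be the $\Phi_n(q)$-power congruence for $n\equiv3\pmod4$. Lemma~\ref{lemma:zero} combined with the first-order antisymmetry \eqref{eq:zqk-1} only delivers vanishing modulo $\Phi_n(q)$, whereas reaching $\Phi_n(q)^4$ after the $a\to1$ specialization requires the parametric double sum to vanish modulo $\Phi_n(q)^2$. This in turn would demand a second-order refinement of \eqref{eq:gs}, namely a congruence valid modulo $\Phi_n(q)^2$ rather than $\Phi_n(q)$, or else a direct analysis of the $\Phi_n(q)$-adic expansion of the paired terms in Lemma~\ref{lemma:zero} that tracks the first-order correction to the cancellation. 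Securing the uniform upgrade from $[n]$ to $[n]^2$ at every proper divisor $d\mid n$ with $d>1$ is a similar second-order obstacle rather than the clean first-order vanishing exploited in the proofs of Theorems~\ref{thm3} and \ref{thm4}. As the paper itself remarks, $q$-congruences modulo such high powers of a cyclotomic polynomial are rare and generally difficult to establish; I expect this to be precisely where the real work lies, and why the statement is posed only as a conjecture.
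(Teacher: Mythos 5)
The statement you are addressing is not a theorem of the paper at all: it is one of the open problems of Section~6 (the paper merely says these ``stronger versions of Theorems \ref{thm7} and \ref{thm8} appear to be true''), so there is no proof in the paper to compare against, and your proposal --- as you yourself concede in the final paragraph --- does not close the gap either. Your outline faithfully reconstructs the machinery that proves the \emph{weaker} Theorem~\ref{thm7} modulo $[n]\Phi_n(q)^2$, and you correctly locate the obstruction: the antisymmetry \eqref{eq:zqk-1}--\eqref{eq:zqk-2} via \eqref{eq:gs} plus Lemma~\ref{lemma:zero} only gives vanishing modulo the \emph{first} power of $\Phi_n(q)$, and the root-of-unity argument only gives one factor of $[n]$, while the conjectured moduli $[n]^2\Phi_n(q)^2$ and $[n]^2\Phi_n(q)^4$ require second-order information throughout (a mod-$\Phi_n(q)^2$ refinement of \eqref{eq:gs}, and an $[n]\to[n]^2$ upgrade at every divisor $d\mid n$). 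That assessment is accurate and is presumably exactly why the authors left the statement as a conjecture.

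Two technical points in your plan are nevertheless off and worth flagging. First, the mechanism you invoke for the modulus promotion does not work as stated: evaluating the parametric double sum at $a=q^{\pm n}$ and squaring the single-sum value gives, for $n\equiv3\pmod4$, an \emph{exact} zero at those points, but a value vanishing at $a=q^{\pm n}$ yields only the first-order congruence modulo $(1-aq^n)(a-q^n)$; to reach $\Phi_n(q)^4$ after $a\to1$ you would need divisibility by $(1-aq^n)^2(a-q^n)^2$, i.e.\ vanishing of the $a$-derivative as well, which ``squaring doubles each factor'' does not supply. Relatedly, your claim that ``the factor $(1-q^n)^2$ produced as $a\to1$'' promotes the cyclotomic power misreads the $b=1$, $a\to1$ step: in the proofs of Theorems~\ref{thm3}, \ref{thm4} and \ref{thm6}, setting $b=1$ contributes a \emph{single} factor $1-q^n$ via \eqref{eq:relation}, and the quotient $(1-aq^n)(a-q^n)/(1-a)^2$ is an indeterminate form handled by L'H\^ospital's rule, producing the finite extra term $4[n]^2\sum q^{4k-2}/[4k-2]^2$ rather than raising the modulus. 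Second, the closed-form evaluations you cite at $b=q^n$ come from Andrews' and Jain's Whipple analogues only for the (H.2)-type series of Theorem~\ref{thm6} (no $[4k+1]$ factor); for the (A.2)-type summand of Theorem~\ref{thm7} the relevant parametric congruence in the paper is the one-parameter result of \cite{Guo-a2} and \cite{WY2}, and a two-parameter $b$-evaluation of the kind you posit would have to be extracted from Watson's $_8\phi_7$ transformation and is not available in the paper.
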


\begin{conjecture}
Let $c_q(k)$ be given in Theorem \ref{thm8}. Then, modulo $[n]^2\Phi_n(q)^3$,
\begin{align*}
\sum_{k=0}^{n-1}\sum_{j=0}^{k}c_q(j)c_q(k-j)
\equiv q^{1-n}\left([n]^2+\frac{(n^2-1)(1-q)^2}{12}[n]^4\right) \left(\sum_{k=0}^{(n-1)/2} \frac{(q;q^2)_k^4}{(q^2;q^2)_k^4} q^{2k}\right)^2.
\end{align*}
\end{conjecture}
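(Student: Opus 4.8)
The plan is to run the two-parameter creative-microscoping argument of Theorems~\ref{thm-a3} and~\ref{thm6}, now for the sextic summand $c_q(k)=[4k+1](q;q^2)_k^6/(q^2;q^2)_k^6\,q^k$, and to push the asymptotic expansion one order further than in Theorem~\ref{thm8} so as to capture the correction term. The orienting observation is that, modulo $[n]^6$ (hence a fortiori modulo $[n]^2\Phi_n(q)^3$), the conjectured right-hand side is a perfect square:
\begin{equation*}
q^{1-n}\Bigl([n]^2+\tfrac{(n^2-1)(1-q)^2}{12}[n]^4\Bigr)\Sigma^2
\equiv\Bigl(q^{(1-n)/2}\bigl([n]+\tfrac{(n^2-1)(1-q)^2}{24}[n]^3\bigr)\Sigma\Bigr)^2,
\end{equation*}
where $\Sigma=\sum_{k=0}^{(n-1)/2}(q;q^2)_k^4/(q^2;q^2)_k^4\,q^{2k}$ and I have used $([n]+c[n]^3)^2=[n]^2+2c[n]^4+c^2[n]^6$ with $c=(n^2-1)(1-q)^2/24$. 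Thus the statement is exactly the ``square'' of a single-sum congruence refining the Guo--Schlosser evaluation \cite[Theorem~2.1]{GS} from modulus $[n]\Phi_n(q)^2$ to modulus $[n]\Phi_n(q)^3$, the refinement being obtained by appending the very same $\tfrac{(n^2-1)(1-q)^2}{24}[n]^3$ correction that upgrades the quartic sum in \eqref{eq:gw}. This reduction both dictates the method and explains the coefficient $1/12=2\cdot\tfrac1{24}$ arising from the cross term of the square.

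Concretely, in the manner of \cite[Theorem~3.3]{GS} I would introduce the deformation
\begin{equation*}
z_q(k)=[4k+1]\frac{(aq,q/a,q/b,q,q,q;q^2)_k}{(aq^2,q^2/a,bq^2,q^2,q^2,q^2;q^2)_k}\,b^k q^k,
\qquad z_q(k)\big|_{a=b=1}=c_q(k).
\end{equation*}
The argument then proceeds along the lines of the proof of Theorem~\ref{thm-a3}. First, the reflection congruence \eqref{eq:gs} yields $z_q(k)\equiv-z_q((n-1)/2-k)\pmod{\Phi_n(q)}$, together with the shifted version in the upper range, so Lemma~\ref{lemma:zero} forces the double sum to vanish modulo $\Phi_n(q)$; the root-of-unity argument used for \eqref{eq2.5} then upgrades this to modulo $[n]$. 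Second, at $a=q^{\pm n}$ and at $b=q^n$ the summand is zero for $(n+1)/2\leqslant k\leqslant n-1$, so Lemma~\ref{Mohamed-Lemma1-2019} collapses the double sum to the square of an explicitly summable single sum, producing closed forms modulo $(1-aq^n)(a-q^n)$ and modulo $b-q^n$. Third, I would glue these by the Chinese remainder theorem via \eqref{eq:ab-1}--\eqref{eq:ab-2}, set $b=1$, and finally let $a\to1$ through an L'H\^opital computation modelled on the one in the proof of Theorem~\ref{thm6}; the next-order term of that limit is designed to deposit precisely the $\tfrac{(n^2-1)(1-q)^2}{12}[n]^4$ correction.

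The main obstacle is twofold, and it is exactly what leaves the statement conjectural. First, the single-sum inputs must be known to one higher $\Phi_n(q)$-power than in \cite{GS}: one needs the sextic evaluation refined to modulus $[n]\Phi_n(q)^3$ with an explicit correction, which requires carrying the underlying Watson $_8\phi_7$ transformation behind \cite[Theorem~3.3]{GS} one order further, in the same way that \eqref{eq:gw} refines the quartic sum. Second, and more seriously, the target modulus $[n]^2\Phi_n(q)^3$ means controlling the residue modulo $\Phi_n(q)^5$, whereas the two-parameter specialization together with the antisymmetry step reaches only total $\Phi_n(q)$-multiplicity four, i.e.\ $[n]\Phi_n(q)^3$, exactly as the analogous argument does for Theorem~\ref{thm3} and hence one power short of what is required here. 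Closing this final gap, either by sharpening the antisymmetry/root-of-unity step to hold modulo $[n]^2$ or by introducing a third deformation parameter, is the genuinely hard part, and is an instance of the fifth-power-of-a-cyclotomic-polynomial difficulty flagged in the remark following the conjectural strengthening of Theorem~\ref{thm4}.
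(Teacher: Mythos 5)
There is no proof of this statement in the paper for you to be compared against: it is the final entry in Section~6, ``Open problems and concluding remarks,'' and the authors leave it as a conjecture. Your proposal, to its credit, does not pretend otherwise --- it is a program plus an accurate diagnosis of why the program stalls, and both parts check out. The orienting computation is correct: with $c=(n^2-1)(1-q)^2/24$ one has $([n]+c[n]^3)^2=[n]^2+2c[n]^4+c^2[n]^6$ and $2c=(n^2-1)(1-q)^2/12$, and $[n]^2\Phi_n(q)^3$ divides $[n]^6$ for $n>1$ (the case $n=1$ is trivial to verify directly, since $\Phi_1(q)=q-1$ does not divide $[1]$ and the ``a fortiori'' needs that small caveat). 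Your proposed two-parameter deformation, the reflection step via \eqref{eq:gs} and Lemma \ref{lemma:zero}, the root-of-unity upgrade as in \eqref{eq2.5}, the collapse of the double sum via Lemma \ref{Mohamed-Lemma1-2019} at $a=q^{\pm n}$ and $b=q^n$, and the gluing via \eqref{eq:ab-1}--\eqref{eq:ab-2} followed by $b=1$ and $a\to1$ are exactly the machinery the paper uses for Theorems \ref{thm3}, \ref{thm4} and \ref{thm6}; note that for Theorem \ref{thm8} itself the authors use only the one-parameter version (``exactly the same as that of Theorem \ref{thm1}''), so your two-parameter upgrade is the natural next step, not a departure.

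Your two obstructions are the genuine ones. First, the single-sum input would have to be refined from the modulus $[n]\Phi_n(q)^2$ of \cite[Theorem 2.1]{GS} to $[n]\Phi_n(q)^3$ with an explicit $\tfrac{(n^2-1)(1-q)^2}{24}[n]^3$ correction, in the way \eqref{eq:gw} refines the quartic sum; no such refinement is available. Second, the CRT scheme tops out at $[n]\Phi_n(q)(1-aq^n)(a-q^n)$, i.e.\ $[n]\Phi_n(q)^3$ at $a=1$, which is total $\Phi_n(q)$-multiplicity four, one short of the multiplicity five demanded by $[n]^2\Phi_n(q)^3$; this is precisely the fifth-power difficulty the authors flag after the conjectural strengthening of Theorem \ref{thm4}. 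One further caution you should make explicit: even granting the refined single-sum evaluation, the conjecture does not follow by ``squaring,'' because the double sum equals the square of the single sum only at the specialization points $a=q^{\pm n}$, $b=q^n$ where Lemma \ref{Mohamed-Lemma1-2019} applies; away from these points the correction must emerge from the L'H\^opital limit of the CRT combination, as in Theorem \ref{thm6}, and verifying that this limit deposits exactly $\tfrac{(n^2-1)(1-q)^2}{12}[n]^4\Sigma^2$ is itself an open computation. In short: your attempt is consistent with the paper's methods and correctly locates why the statement remains conjectural, but it is a proof plan, not a proof --- which matches the status the paper assigns to the statement.
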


\vskip 5mm \noindent{\bf Acknowledgment.} The first author was partially supported by the National Natural Science Foundation of
China (grant 11771175). The second author was partially supported by the Natural Science Foundation of the Jiangsu Higher
Education Institutions of China (grant 19KJB110006).


\begin{thebibliography}{10}
\small \setlength{\itemsep}{-.8mm}

\bibitem{Andrews}G.E. Andrews, On $q$-analogues of the Watson and Whipple summations, SIAM J. Math. Anal. 7 (1976), 332--336.

\bibitem{Mohamed} M. El Bachraoui, On supercongruences for truncated sums of squares of basic hypergeometric series,
Ramanujan J. 54 (2021), 415--426.

\bibitem{GR} G. Gasper and M. Rahman, Basic Hypergeometric Series, 2nd Edition, Encyclopedia of
Mathematics and Its Applications 96, Cambridge University Press, Cambridge, 2004.

\bibitem{GZ}J. Guillera and W. Zudilin, ``Divergent" Ramanujan-type supercongruences, Proc. Amer. Math. Soc. 140 (2012), 765--777.

\bibitem{Guo-2018}V.J.W. Guo, A $q$-analogue of a Ramanujan-type supercongruence involving central binomial coefficients,
J. Math. Anal. Appl. 458 (2018), 590--600.

\bibitem{Guo-para}V.J.W. Guo, Some $q$-congruences with parameters, Acta Arith. 190 (2019), 381--393.

\bibitem{Guo-a2}V.J.W. Guo, A $q$-analogue of the (A.2) supercongruence of Van Hamme for primes $p\equiv 1\pmod 4$,
Rev. R. Acad. Cienc. Exactas F\'is. Nat., Ser. A Mat. 114 (2020), Art.~123.

\bibitem{Guo-div}V.J.W. Guo, $q$-Analogues of two ``divergent" {R}amanujan-type supercongruences,
Ramanujan J. 52 (2020), 605--624.

\bibitem{Guo-f}V.J.W. Guo, Proof of some $q$-supercongruences modulo the fourth power of a
 cyclotomic polynomial, Results Math. 75 (2020), Art. 77.

\bibitem{Guo-mod4}V.J.W. Guo, $q$-Supercongruences modulo the fourth power of a cyclotomic polynomial via creative microscoping,
Adv. Appl. Math. 120 (2020), Art. 102078.

\bibitem{Guo-c2}V.J.W. Guo, Proof of a generalization of the (C.2) supercongruence of Van Hamme,
Rev. R. Acad. Cienc. Exactas F\'is. Nat., Ser. A Mat. 115 (2021), Art. 45.

\bibitem{Guo-diff}V.J.W. Guo, Some variations of a `divergent' Ramanujan-type $q$-supercongruence,
J. Difference Equ. Appl., in press;  https://doi.org/10.1080/10236198.2021.1900140

\bibitem{Guo-ijnt}V.J.W. Guo, A further $q$-analogue of Van Hamme's (H.2) supercongruence for primes $p\equiv 3 \pmod 4$,
Int. J. Number Theory, in press; https://doi.org/10.1142/S1793042121500329


\bibitem{GS0}V.J.W. Guo and M.J. Schlosser, Proof of a basic hypergeometric supercongruence modulo the fifth power of a cyclotomic polynomial,
J. Difference Equ. Appl. 25 (2019), 921--929.

\bibitem{GS2}V.J.W. Guo and M.J. Schlosser, A new family of $q$-supercongruences modulo the fourth power of a cyclotomic polynomial,
Results Math. 75 (2020), Art. 155.

\bibitem{GS}V.J.W. Guo and M.J. Schlosser, Some $q$-supercongruences from transformation formulas for basic hypergeometric series,
Constr. Approx. 53 (2021), 155--200.

\bibitem{GW}V.J.W. Guo and S.-D. Wang, Some congruences involving fourth powers of central $q$-binomial coefficients,
Proc. Roy. Soc. Edinburgh Sect. A 150 (2020), 1127--1138.

\bibitem{GZ}V.J.W. Guo and J. Zeng,  Some $q$-analogues of supercongruences of Rodriguez-Villegas, J. Number Theory 145 (2014), 301--316.

\bibitem{GuoZu}V.J.W. Guo and W. Zudilin, A $q$-microscope for supercongruences, Adv. Math. 346 (2019), 329--358.

\bibitem{GuoZu2}V.J.W. Guo and W. Zudilin, On a $q$-deformation of modular forms,
J. Math. Anal. Appl. 475 (2019), 1636--646.

\bibitem{GuoZu3}V.J.W. Guo and W. Zudilin, Dwork-type supercongruences through a creative $q$-microscope,
J. Combin. Theory, Ser. A 178 (2021), Art. 105362.

\bibitem{Jain}V.K. Jain, Some transformations of basic hypergeometric functions II, SIAM J. Math. Anal. 12 (1981), 957--961.

\bibitem{Li}L. Li, Some $q$-supercongruences for truncated forms of squares of basic hypergeometric series,
J. Difference Equ. Appl. 27 (2021), 16--25.

\bibitem{LW}L. Li and S.-D. Wang, Proof of a $q$-supercongruence conjectured by Guo and Schlosser,
Rev. R. Acad. Cienc. Exactas F\'is. Nat., Ser. A Mat. 114 (2020), Art. 190.

\bibitem{Liu}J-C. Liu, On a congruence involving $q$-Catalan numbers, C. R. Math. Acad. Sci. Paris 358 (2020), 211--215.

\bibitem{LH}J.-C. Liu and Z.-H. Huang, A truncated identity of Euler and related $q$-congruences,
Bull. Aust. Math. Soc. 102 (2020), 353--359.

\bibitem{LP}J.-C. Liu and F. Petrov, Congruences on sums of $q$-binomial coefficients, Adv. Appl. Math. 116 (2020), Art.~102003.

\bibitem{Long}L. Long, Hypergeometric evaluation identities and supercongruences, Pacific J. Math. 249 (2011), 405--418.

\bibitem{LR}L. Long and R. Ramakrishna, Some supercongruences occurring in truncated hypergeometric series, Adv. Math. 290 (2016), 773--808.

\bibitem{Mortenson}E. Mortenson, A supercongruence conjecture of Rodriguez-Villegas for a certain truncated
hypergeometric function, J. Number Theory 99 (2003), 139--147.

\bibitem{NP}H.-X. Ni and H. Pan, On a conjectured $q$-congruence of Guo and Zeng, Int. J. Number Theory 14 (2018), 1699--1707.

\bibitem{NP2}H.-X. Ni and H. Pan, Some symmetric $q$-congruences modulo the square of a cyclotomic polynomial,
J. Math. Anal. Appl. 481 (2020), Art. 123372.

\bibitem{RV}F. Rodriguez-Villegas, Hypergeometric families of Calabi-Yau manifolds, in: Calabi-Yau
Varieties and Mirror Symmetry (Toronto, ON, 2001), Fields Inst. Commun., 38, Amer. Math. Soc., Providence, RI, 2003, pp. 223--231.

\bibitem{Straub}A. Straub, Supercongruences for polynomial analogs of the Ap\'ery numbers,
Proc. Amer. Math. Soc. 147 (2019), 1023--1036.

\bibitem{Hamme}L. Van Hamme, Some conjectures concerning partial sums of generalized hypergeometric
series, in: $p$-Adic Functional Analysis (Nijmegen, 1996), Lecture
Notes in Pure and Appl. Math. 192, Dekker, New York, 1997, pp.~223--236.

\bibitem{WY}X. Wang and M. Yue, Some $q$-supercongruences from Watson's $_8\phi_7$ transformation formula,
Results Math. 75 (2020), Art. 71.

\bibitem{WY2}X. Wang and M. Yue, A $q$-analogue of the (A.2) supercongruence of Van Hamme
for any prime $p\equiv 3\pmod{4}$, Int. J. Number Theory  16 (2020), 1325--1335.

\bibitem{WY3}X. Wang and M. Yue, A $q$-analogue of a Dwork-type supercongruence, Bull. Aust. Math. Soc. 103 (2021), 303--310.

\bibitem{Wei}C. Wei, A further $q$-analogue of Van Hamme's (H.2) supercongruence for any prime $p\equiv 1\pmod{4}$, Results Math.  76 (2021), Art. 92.

\bibitem{Wei2}C. Wei, Some $q$-supercongruences modulo the fourth power of a cyclotomic polynomial, J. Combin. Theory, Ser. A  182 (2021), Art. 105469.

\bibitem{WLW}C. Wei, Y. Liu and X. Wang, $q$-Supercongruences from the $q$-Saalsch\"utz identity, Proc. Amer. Math. Soc.,
to appear.

\bibitem{Zudilin}W. Zudilin, Congruences for $q$-binomial coefficients, Ann. Combin. 23 (2019), 1123--1135.

\end{thebibliography}
\end{document}